\theoremstyle{plain}
\theoremstyle{definition}
\newtheorem{theorem}{Theorem}[section]
\newtheorem{remark}[theorem]{Remark}
\newtheorem{lemma}[theorem]{Lemma}
\newtheorem{definition}[theorem]{Definition}
\newtheorem{example}[theorem]{Example}
\newtheorem{proposition}[theorem]{Proposition}
\DeclareMathAlphabet{\mathpzc}{OT1}{pzc}{m}{it}
\def\ra{\rightarrow}
\def\pr{\prime}
\DeclareMathOperator{\Cov}{Cov}
\DeclareMathOperator{\Bic}{Bic}
\newcommand*{\vcenteredhbox}[1]{\begingroup
\setbox0=\hbox{#1}\parbox{\wd0}{\box0}\endgroup}
\begin{document}

\title{The canonical join complex for biclosed sets}
\date{}
\author{Alexander Clifton}
\address{Department of Mathematics and Computer Science, Emory University}
\email{aclift2@emory.edu}

\author{Peter Dillery}
\address{Department of Mathematics, University of Michigan}
\email{dillery@umich.edu}

\author{Alexander Garver}
\address{Laboratoire de Combinatoire et d'Informatique Math\'ematique,
Universit\'e du Qu\'ebec \`a Montr\'eal}
\email{alexander.garver@lacim.ca}

\maketitle

\begin{abstract}
The canonical join complex of a semidistributive lattice is a simplicial complex whose faces are canonical join representations of elements of the semidistributive lattice. We give a combinatorial classification of the faces of the canonical join complex of the lattice of biclosed sets of segments supported by a tree, as introduced by the third author and McConville. We also use our classification to describe the elements of the shard intersection order of the lattice of biclosed sets. As a consequence, we prove that this shard intersection order is a lattice.
\end{abstract}

\tableofcontents

\section{Introduction}

In \cite{mcconville2017lattice}, McConville introduced a lattice of biclosed sets as a tool for studying the lattice structure of Grid-Tamari orders. The class of these lattices of biclosed sets includes the weak order on permutations. As the weak order on permutations appears in many mathematical contexts, including (and certainly not limited to) geometric combinatorics \cite{reading:cambrian_lattices} \cite{hohlweg2011permutahedra} and representation theory of preprojective algebras \cite{mizuno2014classifying, thomas2017stability}, it is natural to study the lattice-theoretic aspects of biclosed sets.

In subsequent work by McConville and the third author \cite{garver2015lattice} \cite{garver2016oriented} \cite{garver2017enumerative}, biclosed sets were used to understand the lattice structure and other lattice-theoretic questions about Grid-Tamari orders and oriented flip graphs. Futhermore, in \cite{garver2016oriented} \cite{garver2017enumerative}, the authors describe the lattice-theoretic shard intersection order, in the sense of \cite{ReadingPAB}, of the Grid-Tamari order and of oriented flip graphs. The goal of this paper is to gain a combinatorial description of this lattice-theoretic shard intersection order of the lattice of biclosed sets appearing in \cite{garver2016oriented}. 

To understand this shard intersection order, it is very useful to understand the canonical join complex of the lattice of biclosed sets. The canonical join complex is defined for any semidistributive lattice $L$.  The lattices of biclosed sets that we consider in this paper are all congruence-uniform, which implies that they are semidistributive. The canonical join complex is the simplicial complex whose faces are canonical join representations of elements of $L$. The shard intersection order of $L$, denoted $\Psi(L)$, is an alternative partial order on the elements of $L$ that is constructed using the data of canonical join representations of elements of $L$. We remark that if $L$ is not congruence-uniform, then $\Psi(L)$ may not be partially ordered.

Our approach is to, first, describe the join-irreducible biclosed sets (see Proposition~\ref{Prop_join_irr_des}). After that, we use this description to classify the faces of the canonical join complex of biclosed sets (see Theorem~\ref{Thm_CJC}). We are then in a position describe the elements of the shard intersection order (see Theorem~\ref{psib}) and the lattice structure of the shard intersection order (see Theorem~\ref{Thm_lattice_prop}). In particular, we prove that the shard intersection order of biclosed sets is a lattice.

The paper is organized as follows. We remind the reader of the lattice theory that we will use throughout the paper in Section~\ref{Sec_lattice}. We describe the lattices of biclosed sets we will work with in Section~\ref{Sec_biclosed}. In Section~\ref{cul}, we construct a special labeling of the covering relations in lattices of biclosed sets and use this labeling to index the join-irreducible and meet-irreducible biclosed sets. We then describe the faces of the canonical join complex of biclosed sets in Section~\ref{Sec_cjc}. Lastly, we study shard intersection order of biclosed sets in Section~\ref{shard_section}.


\section{Preliminaries}\label{Sec_prelim}

\subsection{Lattices}\label{Sec_lattice}


Let $(L,\le_L)$ be a finite lattice. For $x,y \in L$, if $x < y$ and there does not exist $z \in L$ such that $x < z <y$, we write $x \lessdot y$. Let $\Cov(L) := \{(x,y) \in L^{2} \mid x \lessdot y\}$ be the set of \textbf{covering relations} of $L$. We let $\hat{0}, \hat{1} \in L$ denote the unique minimal and unique maximal elements of $L$, respectively. 

A set map $\lambda : \Cov(L) \to Q$, where $(Q, \leq_{Q})$ is some poset is called an \textbf{edge labeling}. We review the concepts of join- and meet-irreducibility in order to discuss an important type of labeling. 


We say that an element $j \in L$ is \textbf{join-irreducible} if $j \neq \hat{0}$ and whenever $j = x \vee y$, either $j = x$ or $j = y$ holds. \textbf{Meet-irreducible} elements $m \in L$ are defined dually. We denote the subset of join-irreducible (resp., meet-irreducible) elements by $\text{JI}(L)$ (resp., $\text{MI}(L)$). For $j$ (resp., $m$) in $\text{JI}(L)$ (resp., $\text{MI}(L)$), we let $j_{*}$ (resp., $m^{*}$) denote the unique element of $L$ covered by (resp., that covers) $j$ (resp., $m$). 

For $A\subseteq L$, the expression $\bigvee A := \bigvee_{a \in A} a$ is \textbf{irredundant} if there does not exist a proper subset $A^{\pr}\subsetneq A$ such that $\bigvee A^{\pr}=\bigvee A$. Given $A,B\subseteq \text{JI}(L)$ such that $\bigvee A$ and $\bigvee B$ are irredundant and $\bigvee A=\bigvee B$, we set $A\preceq B$ if for $a\in A$ there exists $b\in B$ with $a\leq b$. In this situation, we say that $\bigvee A$ is a \textbf{refinement} of $\bigvee B$. If $x\in L$ and $A\subseteq\text{JI}(L)$ such that $x=\bigvee A$ is irredundant, we say $\bigvee A$ is a \textbf{canonical join representation} of $x$ if $A\preceq B$ for any other irrendundant join representation $x=\bigvee B,\ B\subseteq\text{JI}(L)$. Dually, one defines \textbf{canonical meet representations}.

We define the \textbf{canonical join complex} of $L$, denoted $\Delta^{CJ}(L)$, to be the abstract simplicial complex whose vertex set is $\text{JI}(L)$ and whose faces are sets of join-irreducibles whose join is a canonical join representation of some  element of $L$. 

Now we assume that $L$ is a \textbf{semidistributive} lattice. This means that for any three elements $x, y, z \in L$, the following properties hold:
\begin{itemize}
\item if $x \wedge z = y \wedge z$, then $(x \vee y) \wedge z = x \wedge z$, and
\item if $x \vee z = y \vee z$, then $(x \wedge y) \vee z = x \vee z$.
\end{itemize}
It is known that a lattice $L$ is semidistributive if and only if each element of $L$ has a canonical join representation and a canonical meet representation \cite[Theorem 2.24]{freese1995free}. In this case, there is a canonical bijection $L \to \Delta^{CJ}(L)$ sending $x \mapsto A$ where $\bigvee A$ is the canonical join representation of $x$.

With these notions in hand, we arrive at the notions of \textbf{CN-} and \textbf{CU-labeling}, the latter of which plays a prominent role in this paper.

\begin{definition}\label{defn_cu} A labeling  $\lambda: \Cov(L) \to Q$ is a \textbf{CN-labeling} if $L$ and its dual $L^{*}$ satisfy the following: given $x,y,z \in L$ with $(z,x), (z,y) \in \Cov(L)$ and maximal chains $C_{1}$ and $C_{2}$ in $[z, x \vee y]$ with $x \in C_{1}$ and $y \in C_{2}$, \begin{enumerate}
\item[(CN$1$)]{the elements $x' \in C_{1}, y' \in C_{2}$ such that $(x', x \vee y), (y', x \vee y) \in \Cov(L)$ satisfy $$\lambda(x', x \vee y) = \lambda(z,y), \lambda(y', x \vee y) = \lambda(z, x);$$}
\item[(CN$2$)]{if $(u,v) \in \Cov(C_{1})$ with $z < u, v < x \vee y$, then $\lambda(z,x), \lambda(z,y) <_{Q} \lambda(u,v)$;}
\item[(CN$3$)]{the labels on $\Cov(C_{1})$ are pairwise distinct.}
\end{enumerate}

We say that $\lambda$ is a \textbf{CU-labeling} if, in addition, it satisfies \begin{enumerate}
\item[(CU$1$)]{$\lambda(j_{*}, j) \neq \lambda(j_{*}', j')$ for $j, j' \in \text{JI}(L)$, $j \neq j'$, and}
\item[(CU$2$)]{$\lambda(m, m^{*}) \neq \lambda(m, m^{'*})$ for $m, m' \in \text{MI}(L)$, $m \neq m'$.}
\end{enumerate} 

If $L$ admits a CU-labeling, it is said to be \textbf{congruence-uniform}. 
\end{definition}

\begin{remark}\label{Remark_def_of_CU_lattice}
For completeness, we include the more standard definition of a congruence-uniform lattice. 

Given $(x, y) \in \text{Cov}(L)$, we let $\text{con}(x,y)$ denote the most refined lattice congruence for which $x \equiv y$. Such congruences are join-irreducible elements of the lattice of lattice congruences of $L$, denoted $\text{Con}(L)$. When $L$ is a finite lattice, the join-irreducibles (resp., meet-irreducibles) of $\text{Con}(L)$ are the congruences of the form $\text{con}(j_*,j)$ (resp., $\text{con}(m,m^*)$). We thus obtain surjections 
$$\begin{array}{rclccrcl}
\text{JI}(L) & \rightarrow & \text{JI}(\text{Con}(L)) & & &  \text{MI}(L) & \rightarrow & \text{MI}(\text{Con}(L)) \\
j & \mapsto & \text{con}(j_*,j) & & &  m & \mapsto & \text{con}(m,m^*).
\end{array}$$
If these maps are bijections, we say that $L$ is \textbf{congruence-uniform}. It follows from \cite[Proposition 2.5]{garver2016oriented} that this definition and the one given in Definition~\ref{defn_cu} are equivalent.
\end{remark}

We conclude this section by mentioning some general properties of CU-labelings and the definition of the lattice-theoretic shard intersection order of $L$. Given a edge labeling $\lambda: \text{Cov}(L) \to Q$, one defines $$\lambda_{\downarrow}(x) := \{\lambda(y,x): \ y\lessdot x\}, \ \ \ \lambda^{\uparrow}(x):= \{\lambda(x,z): \ x \lessdot z\}.$$ 

\begin{lemma}\cite[Lemma 2.6]{garver2016oriented}\label{cu-label_ji_mi}
Let $L$ be a congruence-uniform lattice with CU-labeling $\lambda:\Cov(L)\ra P$. For any $s\in P$, there is a unique join-irreducible $j \in \text{JI}(L)$ (resp., meet-irreducible $m \in \text{MI}(L)$) such that $\lambda(j_*,j) = s$ (resp., $\lambda(m,m^*) = s$). Moreover, this join-irreducible $j$ (resp., meet-irreducible $m$) is the minimal (resp., maximal) element of $L$ such that $s \in \lambda_\downarrow(j)$ (resp., $s \in \lambda^\uparrow(m)$).
\end{lemma}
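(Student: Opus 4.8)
The plan is to prove Lemma~\ref{cu-label_ji_mi} by exploiting the structure forced by the CU-labeling axioms, treating the join-irreducible statement in detail and obtaining the meet-irreducible statement by the dual argument applied to $L^*$. First I would fix a label $s \in P$ that actually appears in the image of $\lambda$, and collect all covering relations carrying that label. The key observation is that, by the very definition of a CU-labeling, every label is in particular realized on some covering relation $(j_*, j)$ with $j \in \text{JI}(L)$: indeed, (CN$3$) forces the labels along any maximal chain to be distinct, and I would argue using (CN$2$) that, starting from an arbitrary covering relation $(x,y)$ labeled $s$, one can descend to a covering relation of the form $(j_*, j)$ below it carrying the same label. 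Concretely, if $(x,y)$ is not of the form $(j_*, j)$—i.e. $x$ has another lower cover $z$—then the behavior of $\lambda$ on the diamond $[z \wedge x, y]$ described by (CN$1$) lets me transport the label $s$ to a strictly lower edge, and iterating this process terminates at a join-irreducible because $L$ is finite.

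Next I would establish uniqueness. Axiom (CU$1$) states precisely that $\lambda(j_*, j) \neq \lambda(j'_*, j')$ whenever $j \neq j'$ are distinct join-irreducibles, so there is at most one join-irreducible $j$ with $\lambda(j_*, j) = s$. Combined with the existence argument above, this yields a unique $j \in \text{JI}(L)$ with $\lambda(j_*, j) = s$, proving the first clause. The meet-irreducible clause follows verbatim by dualizing: a CU-labeling of $L$ induces one of $L^*$ (the CN-axioms are self-dual by Definition~\ref{defn_cu}, and (CU$1$), (CU$2$) swap roles), so applying the join-irreducible result to $L^*$ gives a unique $m \in \text{MI}(L)$ with $\lambda(m, m^*) = s$.

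For the final ``moreover'' clause, I would show that $j$ is the minimal element of $L$ whose down-set of labels $\lambda_\downarrow(j)$ contains $s$. That $s \in \lambda_\downarrow(j)$ is immediate since $(j_*, j)$ is a covering relation with label $s$. For minimality, suppose $x \in L$ also satisfies $s \in \lambda_\downarrow(x)$, so there is a lower cover $w \lessdot x$ with $\lambda(w, x) = s$. I would then argue that $x$ must lie weakly above $j$. The natural route is to apply the forcing behavior of (CN$1$) and (CN$2$) in the interval $[w \wedge j_*, \, x \vee j]$: the repeated label $s$ on the two covering relations $(w, x)$ and $(j_*, j)$ constrains how $x$ and $j$ sit relative to one another, and (CN$2$) prohibits $s$ from reappearing strictly inside the relevant diamond, forcing $j \leq x$. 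The dual argument shows $m$ is the maximal element with $s \in \lambda^\uparrow(m)$.

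The main obstacle I anticipate is the ``moreover'' clause—specifically, rigorously deducing the order relation $j \leq x$ from the mere coincidence of labels. The existence and uniqueness parts are essentially bookkeeping on top of the axioms, but showing that the label $s$ \emph{determines} the minimal element carrying it requires carefully tracking how labels propagate through the covering relations of a well-chosen interval, and ensuring the descent/ascent arguments genuinely terminate at the correct irreducible. This is exactly where the interplay of all three CN-axioms (label transport via (CN$1$), the strict increase via (CN$2$), and distinctness via (CN$3$)) must be combined, and I would expect to invoke the known correspondence between CU-labelings and the standard congruence-uniform structure (Remark~\ref{Remark_def_of_CU_lattice}) as a sanity check, if not as an outright shortcut.
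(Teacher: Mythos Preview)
The paper does not prove this lemma: it is quoted verbatim from \cite[Lemma~2.6]{garver2016oriented} and used as a black box, so there is no in-paper proof to compare against.

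That said, your outline is essentially sound, with two remarks. First, there is a slip in your existence step: you write ``$x$ has another lower cover $z$'', but the obstruction to $(x,y)$ being of the form $(j_*,j)$ is that $y$ (not $x$) has a second lower cover $z\neq x$. One then applies the \emph{dual} of (CN1) to the interval $[x\wedge z,\,y]$ to transport the label $s$ down to a covering relation $(x\wedge z,\,w)$ with $w\le z<y$, and iterates.

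Second, your ``moreover'' argument is more involved than necessary. Once the descent argument is in hand, minimality is immediate: if $s\in\lambda_\downarrow(x)$, run the descent starting from the covering relation below $x$ labeled $s$ to produce a join-irreducible $j'\le x$ with $\lambda(j'_*,j')=s$; by (CU1) this forces $j'=j$, hence $j\le x$. There is no need to analyze the interval $[w\wedge j_*,\,x\vee j]$ separately.
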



Later, in Propositions~\ref{Prop_join_irr_des} and \ref{Prop_meet_irr_des}, we use Lemma~\ref{cu-label_ji_mi} to characterize join- and meet-irreducible elements of $\text{Bic}(T),$ the lattice of biclosed sets defined in the next section. 


One can also use CU-labelings to determine canonical join representations and canonical meet representations of elements of a congruence-uniform lattice. We state this precisely as follows.

\begin{lemma}\cite[Proposition 2.9]{garver2016oriented}\label{Lemma_cjr_cmr}
Let $L$ be a congruence-uniform lattice with CU-labeling $\lambda$. For any $x\in L$, the canonical join representation of $x$ is $\bigvee D$, where $D = \{j \in \text{JI}(L): \ \lambda(j_*,j) \in \lambda_\downarrow(x)\}$. Dually, for any $x\in L$, the canonical meet representation of $x$ is $\bigwedge U$, where $U = \{m \in \text{MI}(L): \ \lambda(m,m^*) \in \lambda^\uparrow(x)\}$.
\end{lemma}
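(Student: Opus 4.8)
The plan is to prove the statement for canonical join representations; the statement for canonical meet representations then follows by applying that result to the dual lattice $L^{*}$. Indeed, $\lambda$ is simultaneously a CU-labeling of $L$ and of $L^{*}$ (the conditions in Definition~\ref{defn_cu} are imposed on both $L$ and $L^{*}$, with (CU$1$) and (CU$2$) interchanging roles), and under order-reversal the join-irreducibles, the sets $\lambda^{\uparrow}$, and the canonical meet representations of $L$ become the meet-irreducibles, the sets $\lambda_{\downarrow}$, and the canonical join representations of $L^{*}$. Because $L$ is congruence-uniform it is semidistributive, so by \cite[Theorem 2.24]{freese1995free} each $x \in L$ has a unique canonical join representation $\bigvee A$ with $A \subseteq \text{JI}(L)$, and it suffices to prove that $A = D$.

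I would first record two easy facts. Since the elements of $A$ are distinct join-irreducibles, condition (CU$1$) shows that $a \mapsto \lambda(a_{*}, a)$ is injective on $A$. Next, $\bigvee D \le x$: if $j \in D$ then $\lambda(j_{*}, j) = s$ for some $s \in \lambda_{\downarrow}(x)$, and Lemma~\ref{cu-label_ji_mi} identifies $j$ as the unique minimal element $z$ of $L$ with $s \in \lambda_{\downarrow}(z)$, which in a finite poset is the least such element; since $s \in \lambda_{\downarrow}(x)$ this forces $j \le x$. Because Lemma~\ref{cu-label_ji_mi} makes $s \mapsto j_{s}$ a bijection from labels to join-irreducibles, the identity $A = D$ is equivalent to the single assertion that $a \mapsto \lambda(a_{*}, a)$ maps $A$ \emph{onto} $\lambda_{\downarrow}(x)$: once $\{\lambda(a_{*},a) : a \in A\} = \lambda_{\downarrow}(x)$ is known, Lemma~\ref{cu-label_ji_mi} gives $A = \{j_{s} : s \in \lambda_{\downarrow}(x)\} = D$.

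The heart of the argument is thus to match the labels $\lambda(a_{*}, a)$ with the lower-cover labels of $x$. It is a standard consequence of the join-semidistributive law that the canonical joinands of $x$ are in bijection with the lower covers of $x$, the joinand matched to a cover $y \lessdot x$ being the unique $a \in A$ with $a \not\le y$ (existence is clear since $\bigvee A = x \not\le y$, and uniqueness follows from the fact that every such $a$ satisfies $y \vee a = x$ together with join-semidistributivity). For a matched pair $(y, a)$ the goal is $\lambda(a_{*}, a) = \lambda(y, x)$, which I would obtain by transporting the label $\lambda(y,x)$ downward along a maximal chain of $[y \wedge a,\, x]$ through $a$: repeated use of (CN$1$) moves the label across the diamonds of this interval (every length-two interval of a semidistributive lattice is a chain or a diamond, so (CN$1$) applies), while (CN$2$) keeps this label the minimal descent label at each stage so that it arrives precisely at the covering relation $a_{*} \lessdot a$, with (CN$3$) ruling out a repetition that could absorb it. This makes $a \mapsto \lambda(a_{*},a)$ a bijection of $A$ onto $\{\lambda(y,x) : y \lessdot x\} = \lambda_{\downarrow}(x)$, and $A = D$ follows.

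The main obstacle is exactly this label-transport step. Because $L$ need not be graded, the join-irreducible $a$ is typically far below $x$ and is \emph{not} a lower cover of it, so $\lambda(a_{*},a)$ cannot be read off from a single diamond; the label must be propagated through an entire maximal chain, and the delicate point is to verify, using (CN$2$) and (CN$3$), that the label of the chosen lower cover of $x$ is never overtaken by an interior label and descends intact to the defining covering relation of $a$. Making this propagation precise — essentially re-deriving the chain-level consequences of a CN-labeling — is where the real work lies.
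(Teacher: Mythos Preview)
The paper does not give its own proof of this lemma: it is quoted verbatim as \cite[Proposition~2.9]{garver2016oriented} and used as a black box. So there is no in-paper argument to compare against; your proposal is in effect a sketch of the proof from the cited source.

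Your outline is the standard one and is correct as far as it goes: reduce to the join case by duality, use semidistributivity to get a canonical join representation $x=\bigvee A$, identify each $a\in A$ with a unique lower cover $y\lessdot x$ via the join-semidistributive law, and then show $\lambda(a_{*},a)=\lambda(y,x)$. Once that equality is established, Lemma~\ref{cu-label_ji_mi} and (CU1) give the bijection you describe, and $A=D$ follows exactly as you say.

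The one genuine gap is the label-transport step, and your own diagnosis of it is accurate but the mechanism you propose is not quite the right one. Invoking length-two diamonds is misleading: the axiom (CN1) is stated for polygon intervals $[z,u\vee v]$ with $z\lessdot u,v$, which are typically much longer than length two, and in particular the interval $[a_{*},x]$ you want to traverse need not decompose into diamonds in any useful way. The clean argument instead inducts on the length of a maximal chain $a=c_{0}\lessdot c_{1}\lessdot\cdots\lessdot c_{n}=x$. At each step one has $y_{i}\lessdot c_{n-i}$ with $\lambda(y_{i},c_{n-i})=\lambda(y,x)$ and $a\not\le y_{i}$; since $a\le c_{n-i-1}$ and $a\not\le y_{i}$, the elements $y_{i}$ and $c_{n-i-1}$ are distinct lower covers of $c_{n-i}$, and the dual form of (CN1) applied to the interval $[y_{i}\wedge c_{n-i-1},\,c_{n-i}]$ (together with (CN2) and (CN3) to pin down where along the chain through $c_{n-i-1}$ the label lands) produces $y_{i+1}\lessdot c_{n-i-1}$ with the same label. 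One must also check at each stage that $a\wedge y_{i}=a_{*}$, which uses the minimality of $a$. Carrying this through to $c_{0}=a$ forces $y_{n}=a_{*}$ and gives the desired equality. This is exactly the ``real work'' you flagged; without it the argument is a plan rather than a proof.
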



Given a lattice $L$ with a CU-labeling, one can define a new partial order on the elements of $L$ known as the \textbf{shard intersection order} of $L$. Reading introduced this concept in \cite{ReadingPAB}.

\begin{definition} Let $L$ be a congruence-uniform lattice with CU-labeling $\lambda : \Cov(L) \to P$. Let $x \in L$ and let $y_{1}, \dots y_{k}$ be the elements of $L$ satisfying $(y_{i}, x) \in \Cov(L)$. We denote the set $\{\lambda(y_{i}, x)\}$ by $\lambda_{\downarrow}(x)$. Define the \textbf{shard intersection order} of $L$, denoted $\Psi(L)$, to be the collection of sets of the form $$\psi(x) := \{ \lambda(w,z) \mid \bigwedge_{i}^{k} y_{i} \leq w < z \leq x, (w,z) \in \Cov(L)\}$$ partially ordered by inclusion. At times, we may refer to the interval $[\bigwedge_{i}^{k} y_{i},x]$ as a \textbf{facial interval}.
\end{definition}

\begin{remark}\label{rel_to_hyperplanes}
The shard intersection order was originally defined by Reading in \cite{reading:noncrossing} when $L = \text{Pos}(\mathcal{A}, B)$ is the poset of regions of a simplicial hyperplane arrangement $\mathcal{A}$ with base region $B$. If, in addtion, $L$ is congruence-uniform, it follows from \cite[Proposition 9-7.13]{ReadingPAB} that these two definitions of the shard intersection order produce the same partial order.
\end{remark}

\subsection{Biclosed sets}\label{Sec_biclosed}

A \textbf{tree} is a finite connected acyclic graph. The degree-one vertices of a tree are called \textbf{leaves}. We can always embed a tree $T$ into the disk $D^{2}$ so that exactly the leaves lie on the boundary. Unless stated otherwise, a tree is assumed to be equipped with such an embedding. Non-leaf vertices of $T$ are thus in the interior of $D^{2}$, and we call these \textbf{interior vertices}. We also assume that the interior vertices of $T$ have degree at least 3.

An \textbf{acyclic path} is a sequence of pairwise distinct vertices $(v_{i_{1}}, \dots, v_{i_{n}})$ of $T$ such that there is an edge connecting $v_{i_{j}}$ and $v_{i_{j+1}}$ for all $1 \leq j \leq n-1$. Since an acyclic path is uniquely determined by its endpoints, we can denote the path $(v_{i_{1}}, \dots, v_{i_{n}})$ by $[v_{i_{1}}, v_{i_{n}}]$. 


Observe that the embedding of $T$ in $D^2$ determines are cyclic ordering of the edges of $T$ that are incident to a given vertex. An acyclic path $(v_{i_{1}}, \dots, v_{i_{n}})$ is called a \textbf{segment} if, for each $1 \leq j \leq n-2$, $[v_{i_{j+1}}, v_{i_{j+2}}]$ is immediately clockwise or counterclockwise from $[v_{i_{j}}, v_{i_{j+1}}]$ with respect to the cyclic ordering on the edges incident to $v_{i_{j+1}}$. The set of all segments supported by a tree $T$ is denoted by $\text{Seg}(T)$. Figure~\ref{seg_fig} shows some examples and non-examples of segments.

\begin{figure}
\includegraphics[scale=1.5]{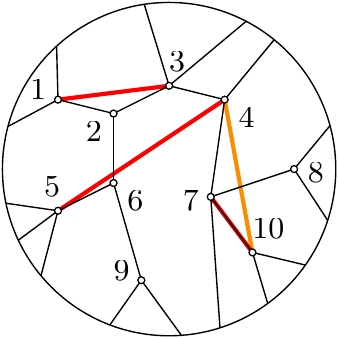}
\caption{Acyclic paths $[1,3], [5,4],$ and $[7,10]$ are segments, but acyclic path $[4,10]$ is not.}
\label{seg_fig}
\end{figure}

Given two segments $s_{1} = (v_{i_1}, v_{i_2}, \ldots, v_{i_k}), s_{2} = (v_{i_k}, v_{i_{k+1}}, \ldots, v_{i_{n}}) \in \text{Seg}(T)$ that share an endpoint $v_{i_k}$ but differ at all other vertices, we define their \textbf{composition} to be the acyclic path $s_{1} \circ s_{2} := (v_{i_1}, \ldots, v_{i_k}, \ldots, v_{i_n})$. We say that two segments $s_1$ and $s_2$ are \textbf{composable} if $s_1 \circ s_2 \in \text{Seg}(T)$. A subset $B\subset \text{Seg}(T)$ is \textbf{closed} if for all composable $s_1, s_2 \in B, s_1 \circ s_2 \in B$. $B$ is \textbf{biclosed} if both $B$ and its complement, $B^c := \text{Seg}(T)\backslash B$, are closed. We will also often say that $B$ is \textbf{coclosed} when $B^c$ is closed. Additionally, if $B \subset \text{Seg}(T)$, we define $\overline{B}$ to be the smallest closed set containing $B$.

The poset structure of $\text{Bic}(T)$ is studied in \cite{garver2016oriented}, where the following result is proved:

\begin{theorem}\cite[Theorem 4.1]{garver2016oriented}\label{bic_structure} The poset $\text{Bic}(T)$ is a semidistributive, congruence-uniform, and polygonal lattice. Moreover, the $\text{Bic}(T)$ has the following properties:
\begin{enumerate}
\item for any $X, Y \in \text{Bic}(T)$, if $X \subsetneq Y$, then there is a segment $y \in Y$ such that $X \sqcup \{y\} \in \text{Bic}(T);$
\item for any $W, X, Y \in \text{Bic}(T)$ with $W \le X\cap Y$, the set $W \cup \overline{(X\cup Y)\backslash W}$ is biclosed;
\item the edge-labeling $\lambda: \text{Cov}(\text{Bic}(T)) \to \text{Seg}(T)$ defined by $\lambda(X,Y) = s$ if $Y\backslash X = \{s\}$ is a CN-labeling.
\end{enumerate}
\end{theorem}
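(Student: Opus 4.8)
The plan is to obtain the four lattice-theoretic assertions (that $\text{Bic}(T)$ is a lattice, and that it is semidistributive, congruence-uniform, and polygonal) as consequences of the three explicit combinatorial statements (1)--(3), and to prove (1)--(3) directly from the definition of composition of segments. I regard property (2) as the engine of the whole argument: once it is available, both the lattice structure and the analysis of rank-two intervals follow quickly.

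First I would prove (2): for biclosed $W,X,Y$ with $W\subseteq X\cap Y$, the set $Z:=W\cup\overline{(X\cup Y)\setminus W}$ is biclosed. There are two things to check. For closedness one must show that a composition of a segment of $W$ with a segment of $\overline{(X\cup Y)\setminus W}$ cannot leave $Z$; for coclosedness one must show $Z^c$ is closed, i.e. that whenever $a,b\in Z^c$ are composable the segment $a\circ b$ again lies in $Z^c$. I would argue the coclosed part by contradiction: assuming $a\circ b\in Z$, I would trace whether $a\circ b$ lies in $W$ or is produced as an iterated composition inside $\overline{(X\cup Y)\setminus W}$, and in each case use that $W$, $X$, and $Y$ are themselves coclosed to force one of $a,b$ back into $Z$. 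This case analysis---keeping track of all the ways a segment of a tree can be cut into a composition of shorter segments---is exactly where the planar/tree hypotheses (the cyclic order of edges at each vertex, which makes $\circ$ a well-defined partial operation) are used, and it is the main obstacle of the whole theorem.

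Granting (2), the lattice structure is immediate: taking $W=\emptyset$ shows $\overline{X\cup Y}$ is biclosed, and since any biclosed set containing $X$ and $Y$ must contain $\overline{X\cup Y}$, this set is the least upper bound, so $X\vee Y=\overline{X\cup Y}$. As $\text{Bic}(T)$ is finite with least element $\emptyset$ and greatest element $\text{Seg}(T)$, the existence of all joins makes it a lattice. Next I would prove (1); combined with finiteness it shows that $\text{Bic}(T)$ is graded by cardinality and, in particular, that every covering relation $X\lessdot Y$ satisfies $|Y\setminus X|=1$, which is what makes the labeling $\lambda$ of (3) well defined. To prove (1) I would select $y\in Y\setminus X$ extremal for the composition order: $y$ should be minimal enough that it cannot be written $y=a\circ b$ with $a,b\in X^c$ (this gives coclosedness of $X\sqcup\{y\}$, using that $Y^c$ is closed), while no composition $x\circ y$ or $y\circ x$ with $x\in X$ escapes $X\cup\{y\}$ (this gives closedness); the existence of such a $y$ is the delicate point and would be handled by a minimal-counterexample argument.

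Finally I would establish polygonality together with (3). By (1) two covers of $z$ have the form $x=z\sqcup\{s\}$ and $y=z\sqcup\{t\}$, and by (2) with $W=z$ one computes $x\vee y=z\cup\overline{\{s,t\}}$; analyzing the closure of the two-segment set $\{s,t\}$ shows the interval $[z,x\vee y]$ is a polygon, which (with the dual statement) gives polygonality. I would then verify (CN$1$)--(CN$3$) for $\lambda$ using this polygon description of rank-two intervals, proving (3). Semidistributivity and congruence-uniformity then follow: $\text{Bic}(T)$ is a polygonal lattice carrying the CN-labeling $\lambda$, and to upgrade $\lambda$ to a CU-labeling it remains only to check (CU$1$) and (CU$2$), namely that $j\mapsto\lambda(j_*,j)$ and $m\mapsto\lambda(m,m^*)$ are injective; this reduces to identifying the join-irreducible and meet-irreducible biclosed sets, as carried out in Proposition~\ref{Prop_join_irr_des}. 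Congruence-uniformity in turn implies semidistributivity by the criterion recalled in Section~\ref{Sec_lattice}.
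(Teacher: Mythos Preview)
The paper does not prove this theorem; it is imported wholesale from \cite{garver2016oriented}, so there is no in-paper argument to compare against. Your outline for (1), (2), the lattice property, polygonality, and the CN-axioms for $\lambda$ is a reasonable sketch of how such a proof goes. The real problem is your final step, where you try to deduce congruence-uniformity by ``upgrading'' $\lambda$ to a CU-labeling.

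That step is simply false: the labeling $\lambda:\Cov(\Bic(T))\to\text{Seg}(T)$ does \emph{not} satisfy (CU1) or (CU2). Proposition~\ref{Prop_join_irr_des} shows that the join-irreducibles of $\Bic(T)$ are parameterized by pairs $s_{\mathcal{D}}\in\mathcal{S}_T$, not by segments alone; for any segment $s$ with at least one break there are several join-irreducibles $J(s_{\mathcal{D}})$ with distinct $\mathcal{D}$, all satisfying $\lambda\bigl(J(s_{\mathcal{D}})_*,J(s_{\mathcal{D}})\bigr)=s$. So $j\mapsto\lambda(j_*,j)$ is not injective, and invoking Proposition~\ref{Prop_join_irr_des} proves the opposite of what you want. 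This is precisely why Section~\ref{cul} has to introduce the refined labeling $\widetilde{\lambda}$ taking values in $\mathcal{S}_T$ rather than $\text{Seg}(T)$. Moreover, your appeal to Proposition~\ref{Prop_join_irr_des} is circular: its ``remaining assertions'' rely on Lemma~\ref{cu-label_ji_mi}, which already presupposes a CU-labeling. To rescue the argument you would need an independent proof of congruence-uniformity (e.g.\ via the congruence criterion of Remark~\ref{Remark_def_of_CU_lattice}, as in \cite{garver2016oriented}), or you would have to construct $\widetilde{\lambda}$ first---but that is Theorem~\ref{cuproof}, which logically comes \emph{after} Theorem~\ref{bic_structure} in this paper.
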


\begin{remark}\label{joinmeet_bijection} Theorem~\ref{bic_structure} implies that the map $(-)^{c} \colon \text{Bic}(T) \to \text{Bic}(T)$, $B \mapsto B^{c}$, gives rise to a bijection $\text{JI}(\text{Bic}(T)) \to \text{MI}(\text{Bic}(T))$. 
\end{remark}

We frequently use the following lemma. It follows from property (2) in Theorem~\ref{bic_structure} with $X = B_1,$ $Y = B_2$, and $W = \emptyset$.

\begin{lemma}\label{Join_Description}
For any $B_{1}, B_{2} \in \text{Bic}(T)$, we have $B_{1} \vee B_{2} = \overline{B_{1} \cup B_{2}}.$
\end{lemma}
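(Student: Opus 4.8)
The plan is to apply property (2) of Theorem~\ref{bic_structure} essentially verbatim. Recall that the order on $\text{Bic}(T)$ is containment, so that $B_1 \vee B_2$ is by definition the smallest biclosed set containing both $B_1$ and $B_2$. First I would establish that $\overline{B_1 \cup B_2}$ is biclosed, and then verify that it enjoys the universal property of the least upper bound in the containment order.

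For the first point, apply property (2) of Theorem~\ref{bic_structure} with the choices $W = \emptyset$, $X = B_1$, and $Y = B_2$. Since $\emptyset = \hat{0}$ is the minimum of $\text{Bic}(T)$, the hypothesis $W \le X \cap Y$ holds automatically, and the conclusion yields that $\emptyset \cup \overline{(B_1 \cup B_2)\backslash \emptyset} = \overline{B_1 \cup B_2}$ is biclosed. For the second point, observe that $\overline{B_1 \cup B_2} \supseteq B_1 \cup B_2$, so it is an upper bound for $B_1$ and $B_2$ in $\text{Bic}(T)$. Conversely, any $C \in \text{Bic}(T)$ with $B_1 \subseteq C$ and $B_2 \subseteq C$ contains $B_1 \cup B_2$; since $C$ is closed and $\overline{B_1 \cup B_2}$ is by definition the smallest closed set containing $B_1 \cup B_2$, we obtain $\overline{B_1 \cup B_2} \subseteq C$. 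Hence $\overline{B_1 \cup B_2}$ is the least upper bound, which is precisely $B_1 \vee B_2$.

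There is no real obstacle once property (2) is available: the only nontrivial content is that the closure $\overline{B_1 \cup B_2}$ remains \emph{coclosed} (so that it is genuinely biclosed and not merely closed), and this is exactly what Theorem~\ref{bic_structure}(2) supplies. The subsequent identification of the smallest closed set containing $B_1 \cup B_2$ with the join in the containment order is then routine, relying only on the universal property of $\overline{(-)}$ as the closure operator.
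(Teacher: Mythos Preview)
Your proof is correct and is essentially the same as the paper's: both apply Theorem~\ref{bic_structure}(2) with $W=\emptyset$, $X=B_1$, $Y=B_2$ to see that $\overline{B_1\cup B_2}$ is biclosed, and you merely spell out the routine verification that this closed set is the least upper bound in the containment order.
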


We use the next lemma to calculate facial intervals in $\text{Bic}(T)$ (see Lemma~\ref{Lemma_facial_int_atoms}).

\begin{lemma}\label{Lemma_meet_in_bic}
For $B_1, B_2 \in \text{Bic}(T)$, one has $B_1 \wedge B_2 = (B_1^c \vee B_2^c)^c.$
\end{lemma}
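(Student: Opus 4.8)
The plan is to recognize that set-complementation is a lattice anti-automorphism of $\text{Bic}(T)$ and then to invoke the standard fact that anti-automorphisms interchange meets and joins. The entire identity will fall out of this once the anti-automorphism property is established.

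First I would check that the map $(-)^c \colon \text{Bic}(T) \to \text{Bic}(T)$, $B \mapsto B^c$, is a well-defined involution. Well-definedness is immediate from the definition of biclosed: $B$ is biclosed precisely when both $B$ and $B^c$ are closed, a condition symmetric in $B$ and $B^c$, so $B^c \in \text{Bic}(T)$ whenever $B \in \text{Bic}(T)$; and $(B^c)^c = B$ shows it is an involution. Next I would verify that $(-)^c$ reverses the order. Since $\text{Bic}(T)$ is ordered by inclusion, we have $B_1 \subseteq B_2$ if and only if $B_2^c \subseteq B_1^c$, so $(-)^c$ is an order-reversing bijection, i.e. a lattice anti-automorphism.

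Finally I would apply the general lattice-theoretic fact that any order-reversing bijection $\phi$ satisfies $\phi(x \wedge y) = \phi(x) \vee \phi(y)$: indeed $\phi(x \wedge y)$ is an upper bound of $\{\phi(x), \phi(y)\}$, and if $z$ is any other upper bound then $\phi^{-1}(z)$ is a lower bound of $\{x, y\}$, forcing $\phi^{-1}(z) \le x \wedge y$ and hence $\phi(x \wedge y) \le z$. Applying this with $\phi = (-)^c$ gives $(B_1 \wedge B_2)^c = B_1^c \vee B_2^c$; taking complements of both sides and using that $(-)^c$ is an involution yields $B_1 \wedge B_2 = (B_1^c \vee B_2^c)^c$, as claimed.

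There is no deep obstacle here: everything reduces to the symmetry of the biclosed condition under complementation. The only point requiring a moment's care is confirming that $(-)^c$ genuinely lands back in $\text{Bic}(T)$, so that both the right-hand side is well-formed and the meet and join on the two sides are computed in the same lattice; once that is in place, the identity is a purely formal consequence of the anti-automorphism property.
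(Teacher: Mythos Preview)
Your argument is correct. You observe that complementation is an order-reversing involution of $\text{Bic}(T)$ and then invoke the general fact that such a map interchanges meets and joins; the identity follows immediately.

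The paper's proof reaches the same conclusion by a direct element-level verification rather than by naming the anti-automorphism. It shows one inclusion by noting that $B_1\wedge B_2\subseteq B_1\cap B_2$, so no element of $B_1\wedge B_2$ lies in $B_1^c\cup B_2^c\subseteq B_1^c\vee B_2^c$; for the reverse inclusion it checks that $(B_1^c\vee B_2^c)^c$ is a biclosed set contained in both $B_1$ and $B_2$, hence is one of the joinands in the tautological expression $B_1\wedge B_2=\bigvee\{B\in\text{Bic}(T):B\subseteq B_1,\,B\subseteq B_2\}$. Your packaging is cleaner and makes clear that the result is a formal consequence of the self-duality of $\text{Bic}(T)$, with no need for the explicit join formula; the paper's version has the minor advantage of being entirely self-contained and not appealing to any general lattice fact.
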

\begin{proof}
Let $s \in B_1 \wedge B_2$. It follows that $s \in B_1 \cap B_2$. Now $s \not \in B_1^c\cup B_2^c \subset {B_1^c \vee B_2^c}.$ This implies that $s \in (B_1^c \vee B_2^c)^c.$

To prove the opposite inclusion, observe that $$B_1 \wedge B_2 = \displaystyle \bigvee_{\begin{array}{c}\small\text{$B \in \text{Bic}(T)$}\\ \small \text{$B \subset B_1, B\subset B_2$} \end{array}} B.$$ Now notice that if $s \in (B_1^c \vee B_2^c)^c$, then $s \not \in B_1^c$ and $s \not \in B_2^c$. This implies that $s \in B_1$ and $s \in B_2$. Thus $(B_1^c \vee B_2^c)^c \subset B_1$ and $(B_1^c \vee B_2^c)^c \subset B_2$. Since $(B_1^c \vee B_2^c)^c \in \text{Bic}(T)$, it follows that $(B_1^c \vee B_2^c)^c$ is a joinand in the join representation of $B_1 \wedge B_2$ shown above. We obtain that $(B_1^c \vee B_2^c)^c \subset B_1 \wedge B_2$.\end{proof}



\begin{example}
Let $T$ be the tree shown in Figure~\ref{perm_bic_fig} with the indicated labeling of the interior vertices. Define a map that sends a segment $s$ to $(i,j) \in \mathbb{N}^2$ with $i < j$ where $i$ and $j$ are the vertex labels of the endpoints of $s$. This induces a map on biclosed sets that sends each biclosed set to the inversion set of a permutation in $\mathfrak{S}_n$. Moreover, it induces a poset isomorphism $\text{Bic}(T) \to \text{Weak}({\mathfrak{S}_n})$ where the latter denotes the weak order on permutations.

Additionally, it follows from \cite[Theorem 10-3.1]{ReadingPAB} that $\text{Weak}(\mathfrak{S}_n)$ is isomorphic to $\text{Pos}(\mathcal{A}, B)$ where $\mathcal{A}$ is Coxeter arrangement of $\mathfrak{S}_n$ and $B$ is the region of the Coxeter arragement containing the identity permutation. Now it follows from Remark~\ref{rel_to_hyperplanes} that the class of lattices of the form $\Psi(\text{Bic}(T))$ includes the shard intersection orders of type $A$ Coxeter arrangements.
\end{example}

\begin{figure}
\centering
\includegraphics[scale=1]{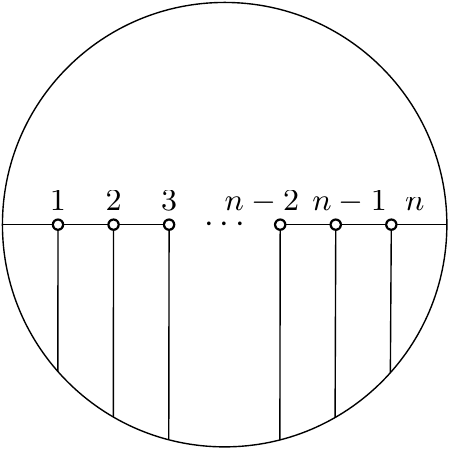}
\caption{A tree whose biclosed sets are identified with permutations in $\mathfrak{S}_n$.}
\label{perm_bic_fig}
\end{figure}



Using the following lemma, one obtains an explicit description of the facial intervals of $\text{Bic}(T)$. One can find a proof of the following lemma, in \cite[Lemma 4.14]{garver2017enumerative}.

\begin{lemma}\label{meet_of_B_is}
Given $B, B\backslash\{s_1\}, \ldots, B\backslash\{s_k\} \in \text{Bic}(T)$, we have that $\bigwedge_{i = 1}^k B_i = B\backslash\overline{\{s_1,\ldots, s_k\}}.$ 
\end{lemma}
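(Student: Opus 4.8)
The plan is to pass to complements, where joins are easy to compute via Lemma~\ref{Join_Description}, and then reduce everything to a single closedness statement. Write $S=\{s_1,\dots,s_k\}$ and $B_i=B\setminus\{s_i\}$; since each $s_i\in B$ we have $S\subseteq B$. First I would record that the complementation map $(-)^c\colon\text{Bic}(T)\to\text{Bic}(T)$ is an order-reversing involution, hence a lattice anti-automorphism. Lemma~\ref{Lemma_meet_in_bic} expresses a binary meet as the complement of a join of complements, and since $(-)^c$ is an involution this is precisely the anti-automorphism property; iterating it gives $\bigwedge_{i=1}^k B_i=\bigl(\bigvee_{i=1}^k B_i^c\bigr)^c$. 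Because $B_i^c=B^c\sqcup\{s_i\}$, Lemma~\ref{Join_Description} (applied to finitely many sets) yields $\bigvee_{i=1}^k B_i^c=\overline{B^c\cup S}$. Since $B$ is closed and $S\subseteq B$, the smallest closed set $\overline{S}$ also lies in $B$, so $\overline S\cap B^c=\emptyset$ and $(B\setminus\overline S)^c=B^c\cup\overline S$. Thus the lemma becomes equivalent to the identity $\overline{B^c\cup S}=B^c\cup\overline S$.

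The inclusion $B^c\cup\overline S\subseteq\overline{B^c\cup S}$ is immediate from monotonicity of closure, so everything reduces to showing that $B^c\cup\overline S$ is closed. I would take composable segments $p,q\in B^c\cup\overline S$ with $p\circ q\in\text{Seg}(T)$ and verify $p\circ q\in B^c\cup\overline S$. If $p,q\in B^c$ this follows since $B^c$ is closed, and if $p,q\in\overline S$ it follows since $\overline S$ is closed; the only genuine case is the mixed one, say $p\in B^c$ and $q\in\overline S$.

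For the mixed case I would prove the stronger statement that $p\circ q\in B^c$, by induction on the number of composition steps needed to build $q$ from the generators $s_1,\dots,s_k$ inside $\overline S$. In the base case $q=s_i$, and since $B_i^c=B^c\cup\{s_i\}$ is closed (this is exactly where the hypothesis $B\setminus\{s_i\}\in\text{Bic}(T)$ enters), we get $p\circ s_i\in B^c\cup\{s_i\}$; as $p\circ s_i$ is strictly longer than $s_i$, it lies in $B^c$. For the inductive step, write $q=q_1\circ q_2$ with $q_1,q_2\in\overline S$ each of lower complexity. The vertex joining $q_1$ and $q_2$ splits the segment $p\circ q$ into two consecutive subpaths, one equal to $p$ composed with the adjacent $q_j$ and the other equal to the remaining $q_{j'}$; both are segments, since every consecutive subpath of a segment is again a segment (immediate from the clockwise/counterclockwise condition defining segments). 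Applying the inductive hypothesis to the first piece places $p\circ q_j\in B^c$, and applying it once more, now composing an element of $B^c$ with $q_{j'}\in\overline S$, places $p\circ q\in B^c$.

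The main obstacle is exactly this mixed composition case: a priori, composing a segment from $B^c$ with one from $\overline S$ could produce a segment lying in $B$ but outside $\overline S$, which would violate closedness. The induction above rules this out, but it relies essentially on the hypothesis that each individual deletion $B\setminus\{s_i\}$ is biclosed (and not merely that $B$ is), so that each $B^c\cup\{s_i\}$ is closed and can absorb the compositions with $p$. Once closedness of $B^c\cup\overline S$ is established, combining the displayed identities gives $\bigwedge_{i=1}^k B_i=(\overline{B^c\cup S})^c=(B^c\cup\overline S)^c=B\setminus\overline{\{s_1,\dots,s_k\}}$, as claimed.
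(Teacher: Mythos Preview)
Your argument is correct. The passage to complements via Lemma~\ref{Lemma_meet_in_bic}, together with Lemma~\ref{Join_Description}, cleanly reduces the statement to showing that $B^c\cup\overline S$ is closed, and your induction on the length of a composition expressing $q\in\overline S$ handles the only nontrivial (mixed) case. The key observation that the hypothesis $B\setminus\{s_i\}\in\text{Bic}(T)$ is used exactly to guarantee closedness of $B^c\cup\{s_i\}$ in the base case is spot on, and the inductive step is sound once you note that the induction hypothesis is quantified over all $p\in B^c$, so replacing $p$ by $p\circ q_j$ is legitimate.

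As for comparison: the paper does not supply its own proof of this lemma but defers to \cite[Lemma~4.14]{garver2017enumerative}, so there is no in-paper argument to compare yours against. Your proof is a clean self-contained verification that uses only tools already established in Section~\ref{Sec_biclosed} (complementation as a lattice anti-automorphism, the explicit description of joins, and the biclosedness of each $B\setminus\{s_i\}$), and would serve perfectly well in place of the external citation.
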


\section{A CU-labeling of $\Bic(T)$}\label{cul}
In \cite{garver2016oriented}, the authors prove that $\text{Bic}(T)$ is congruence-uniform, and thus it admits a CU-labeling. In this section, we explicitly construct such a labeling.

We say a segment $s \in \text{Seg}(T)$ is a \textbf{split} of a segment $t$ if $s$ is a proper subsegment of $t$, and $s$ and $t$ share an endpoint. A \textbf{break} of a segment $[a,c]$ is a pair of splits of $[a,c]$, denoted $\{[a,b],[b,c]\}$, for some vertex $b$ of segment $[a,c]$ lying between $a$ and $c$. We say that $b$ is the \textbf{faultline} of the break $\{[a,b],[b,c]\}$.

Define a poset $\mathcal{S}_T$ whose elements are of the form $(s,{\{s_1,s_2,\ldots,s_m\}}) \in \text{Seg}(T)\times 2^{\text{Seg}(T)}$ with the following properties:
\begin{itemize}
\item $s = (v_0, v_1,\ldots, v_{m+1})$ has $m$ breaks,
\item each $s_i$ is a split of $s$, and
\item two distinct splits $s_i$ and $s_j$ do not appear in the same break of $s$. 
\end{itemize}
We will typically denote $(s,{\{s_1,s_2,\ldots,s_m\}}) \in \mathcal{S}_T$ by $s_\mathcal{D} = s_{\{s_1,s_2,\ldots,s_m\}}$. The elements of $\mathcal{S}_T$ are partially ordered as follows: given $s_{\{s_1,s_2,\ldots,s_k\}}, {s^\prime_{\{t_1,t_2,\ldots,t_l\}}} \in \mathcal{S}_T$, one has ${s^\prime_{\{t_1,t_2,\ldots,t_l\}}} \le s_{\{s_1,s_2,\ldots,s_k\}}$ if $s^\prime$ is a proper subsegment of $s$. At times, we will also write $s^\prime \subseteq s$ (resp., $s^\prime \not \subseteq s$) to indicate that $s^\prime$ is a subsegment (resp., is not a subsegment) of $s$. We will refer to elements of $\mathcal{S}_T$ as \textbf{labels}.

\begin{example}
Let $T$ be the tree shown in Figure~\ref{poset_of_labels_fig}. In this same figure, we show the poset of labels $\mathcal{S}_T$ of the covering relations of $\text{Bic}(T)$.
\end{example}

\begin{figure}
\includegraphics[scale=1]{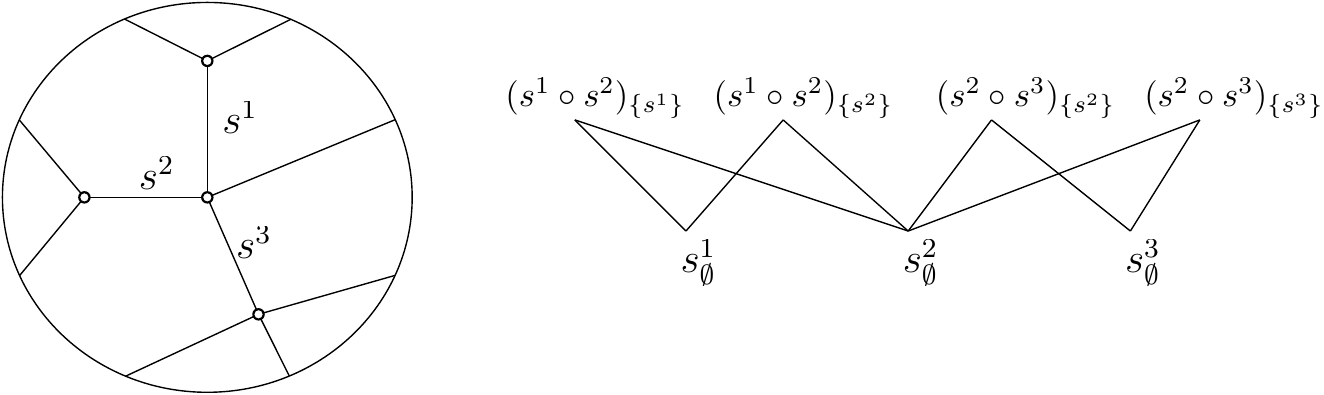}
\caption{A tree $T$ and the corresponding poset of labels $\mathcal{S}_T$. The shortest segments of $T$ are labeled $s^1, s^2,$ and $s^3$.}
\label{poset_of_labels_fig}
\end{figure}

\begin{definition}
Define a map $\widetilde{\lambda}:\text{Cov}(\text{Bic}(T))\rightarrow \mathcal{S}_T$ by
 $\widetilde{\lambda}(B,B\sqcup\{{s}\})=s_{\{s_1,s_2,\ldots,s_k\}}$ where $s_1,s_2,\ldots,s_k$ are the splits of $s$ which are contained in $B$. It is clear that $\widetilde{\lambda}$ is an edge-labeling of $\text{Bic}(T)$. If we let $\lambda: \text{Cov}(\text{Bic}(T)) \to \text{Seg}(T)$ denote the first coordinate function of $\widetilde{\lambda}$, we have that $\lambda$ is the CN-labeling of $\text{Bic}(T)$ from Theorem~\ref{bic_structure} (3).
\end{definition}

\begin{example}
Let $B$ denote the biclosed set shown in Figure~\ref{label_fig}. One checks that $$\widetilde{\lambda}(B\backslash\{[9,8]\}, B) = [9,8]_{\{[9,2], [9,4], [6,8], [3,8], [7,8]\}}.$$
\end{example}



\begin{figure}
\includegraphics[scale=1.5]{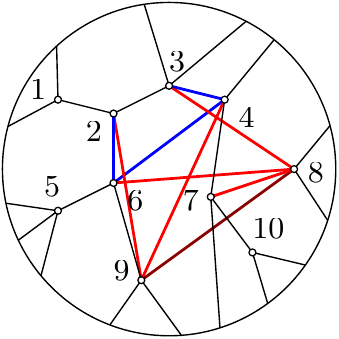}
\caption{The join irreducible biclosed set $J([9,8]_{\{[9,2], [9,4], [6,8], [3,8], [7,8]\}})$ where the tree shown here is the same tree from Figure~\ref{seg_fig}.}
\label{label_fig}
\end{figure}

\begin{theorem}\label{cuproof}
The edge-labeling $\widetilde{\lambda}:\text{Cov}(\text{Bic}(T))\rightarrow \mathcal{S}_T$ is a CU-labeling of $\text{Bic}(T)$.
\end{theorem}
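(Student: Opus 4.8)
\textbf{The plan is} to establish that $\widetilde{\lambda}$ satisfies the three CN-conditions and the two additional CU-conditions from Definition~\ref{defn_cu}. The key observation driving the whole argument is that the first coordinate of $\widetilde{\lambda}$ is already known to be a CN-labeling by Theorem~\ref{bic_structure}(3), so the real content lies in verifying how the second coordinate (the set of contained splits) behaves, and in establishing the injectivity conditions (CU$1$) and (CU$2$) that distinguish a CU-labeling from a mere CN-labeling.

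\textbf{First I would} dispose of (CN$1$), (CN$2$), and (CN$3$). Since $\text{Bic}(T)$ is a polygonal lattice, each interval $[z, x\vee y]$ with $z \lessdot x$ and $z \lessdot y$ is a single polygon, so there are exactly two maximal chains $C_1, C_2$, and along each chain a single segment is added at each cover. The crucial structural fact to pin down is that along such a polygon the segments added are tightly controlled: the two segments added first (namely $\lambda(z,x)$ and $\lambda(z,y)$) and the segments added at the top must be related by composition, since $x \vee y = \overline{x \cup y}$ by Lemma~\ref{Join_Description}. I would argue that because the first-coordinate labels along a polygon already satisfy (CN$1$)--(CN$3$) for $\lambda$, and because for distinct covers the first coordinates are already distinct, the full labels $\widetilde{\lambda}$ inherit (CN$3$) immediately. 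For (CN$2$), I would note that any segment added strictly inside the polygon is a composition involving the two bottom segments, hence strictly longer, so it sits strictly above $\lambda(z,x)$ and $\lambda(z,y)$ in the partial order on $\mathcal{S}_T$ (which is governed by the proper-subsegment relation on first coordinates). For (CN$1$), I would check that the split-set in the second coordinate of the top labels matches that of the bottom labels, using that traversing the two sides of the polygon adds the same segments in opposite orders, so the collection of splits of a given segment already present when it is added is the same on both chains.

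\textbf{The heart of the proof} is (CU$1$) and (CU$2$), and I expect (CU$1$) to be the main obstacle. Here I must show that the map $j \mapsto \widetilde{\lambda}(j_*, j)$ is injective on $\text{JI}(\text{Bic}(T))$. The natural strategy is to exhibit, for each label $s_{\{s_1,\dots,s_k\}} \in \mathcal{S}_T$ that actually occurs, an explicit reconstruction of the join-irreducible $j$ realizing it — this is exactly the set $J(s_{\{s_1,\dots,s_k\}})$ hinted at in the caption of Figure~\ref{label_fig}. I would define $J(s_{\{s_1,\dots,s_k\}})$ to be the smallest biclosed set containing the splits $s_1,\dots,s_k$ together with $s$, or equivalently describe it as $\overline{\{s_1,\dots,s_k\}} \sqcup \{s\}$ and verify this is biclosed and join-irreducible with $j_* = \overline{\{s_1,\dots,s_k\}}$. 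Since the label $\widetilde{\lambda}(j_*, j)$ determines both the added segment $s$ and precisely which splits of $s$ lie below it, distinct labels force distinct $j$, giving injectivity. Then (CU$2$) follows by the duality in Remark~\ref{joinmeet_bijection}: complementation $B \mapsto B^c$ carries join-irreducibles to meet-irreducibles, and I would track how $\widetilde{\lambda}$ transforms under this bijection to deduce (CU$2$) from (CU$1$).

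\textbf{The main technical obstacle} I anticipate is verifying that the candidate $J(s_{\{s_1,\dots,s_k\}})$ really is biclosed and join-irreducible, and that its unique lower cover is reached by removing exactly $s$ with the prescribed split-set recorded. This requires a careful closure computation: I must check that adjoining $s$ to $\overline{\{s_1,\dots,s_k\}}$ creates no new forced compositions beyond $s$ itself (so that the complement stays closed and the set has a unique element $s$ removable to give a biclosed set), and that the condition that $s_i, s_j$ never share a break guarantees irreducibility. Once this reconstruction is in hand, both CU-conditions and hence the theorem follow, so I would invest the most effort in making this closure analysis airtight.
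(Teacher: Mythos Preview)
Your treatment of the CN-axioms is essentially what the paper does: both analyze the polygon $[B, B_1\vee B_2]$ and read off (CN1)--(CN3), the paper via Figure~\ref{polygons_fig}. This part is fine.

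For (CU1) and (CU2) your route diverges from the paper's. The paper proves (CU2) \emph{first and directly}, by a short composability argument: if two meet-irreducibles $M_1 \neq M_2$ carried the same label $s_\mathcal{D}$, then $s \in M_1 \vee M_2$ forces a factorization $s = s_1 \circ \cdots \circ s_k$ with the $s_i$ alternating between $M_1$ and $M_2$; since the common label means $M_1$ and $M_2$ contain exactly the same splits of $s$, an easy induction pushes each partial product into both sets and eventually $s \in M_1$, a contradiction. Then (CU1) is deduced from (CU2) via complementation. This avoids any explicit description of the join-irreducibles; the construction of $J(s_\mathcal{D})$ appears only later, for other purposes.

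Your plan to prove (CU1) by explicitly reconstructing the join-irreducible from the label is a legitimate alternative, but the concrete formula you propose is wrong. You write $J(s_{\{s_1,\dots,s_k\}}) = \overline{\{s_1,\dots,s_k\}} \sqcup \{s\}$; since no two splits in $\mathcal{D}$ are composable this is just $\mathcal{D} \sqcup \{s\}$, and that set is generally \emph{not} coclosed. For instance, on a path with interior vertices $1,2,3,4,5$, take $s = [1,5]$ and $\mathcal{D} = \{[1,2],[3,5],[1,4]\}$: the break $\{[1,3],[3,4]\}$ of $[1,4]$ has neither piece in $\mathcal{D}\sqcup\{s\}$. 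The correct $J(s_\mathcal{D})$ (given after the proof in the paper) must adjoin additional non-split subsegments $S(t)$ of each $t \in \mathcal{D}$, and identifying exactly which ones to add so that the result is biclosed while containing no new splits of $s$ is the real work. Your phrase ``or equivalently'' papers over this, and the verification you flag as the main technical obstacle would in fact fail for the formula you wrote down. If you fix the construction, the strategy goes through, but at that point the paper's direct (CU2) argument is considerably shorter.
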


%
%

\begin{proof}[Proof of Theorem \ref{cuproof}]
We first verify axioms (CN1), (CN2), and (CN3). Note that it is enough verify these axioms for $\text{Bic}(T)$ since $\text{Bic}(T)$ is self-dual. 

Let $B_1, B_2 \in \text{Bic}(T)$ and assume that they both cover some biclosed set $B$. By Theorem~\ref{bic_structure}, there exists $s, t \in \text{Seg}(T)$ such that $B_1 = B \sqcup \{s\}$ and $B_2 = B\sqcup \{t\}$. The interval $[B, B_1\vee B_2]$ has one of the two forms shown in Figure~\ref{polygons_fig} where the left (resp., right) figure occurs when $s$ and $t$ are not composable (resp., when $s$ and $t$ are composable). One deduces axioms (CN1), (CN2), and (CN3) from Figure~\ref{polygons_fig}.
	
We now verify axioms (CU1) and (CU2).
	
	(CU2): Consider two meet-irreducibles $M_1, M_2 \in \text{JI}(\text{Bic}(T))$ which are covered by $M_1^*$ and $M_2^*$, respectively. Assume for the sake of contradiction that $\widetilde{\lambda}(M_1, M_1^*)=\widetilde{\lambda}(M_2, M_2^*),$ and denote this label by $s_\mathcal{D}$. Thus $M_1^*=M_1\sqcup\{{s}\}$ and $M_2^*=M_2\sqcup\{{s}\}$. Note that $s \in M_1\vee M_2$ so there exists $s_1, \ldots, s_k \in M_1 \cup M_2$ such that $s = s_1\circ \cdots \circ s_k$. Without loss of generality, assume that $s_1 \in M_1,$ and that $s_{i} \in M_2$ if $s_{i-1} \in M_1$ and $s_i \in M_1$ if $s_{i-1} \in M_2$ for each $i = 1, \ldots, k$.

Next, since $\widetilde{\lambda}(M_1, M_1^*)=\widetilde{\lambda}(M_2, M_2^*)$, sets $M_1$ and $M_2$ both contain the same split $s^\prime$ or $s^{\prime\prime}$ from a given break $\{s^\prime, s^{\prime\prime}\}$. We know that $s_1$ is a split of $s$ so $s_1 \in M_1 \cap M_2$. Since $s_2 \in M_2$, we know $s_1 \circ s_2 \in M_2$. Now $s_1\circ s_2$ is a split of $s$ so it follows that $s_1\circ s_2 \in M_1\cap M_2$. By continuing this argument, we obtain that $s \in M_1$, a contradiction.
	
	(CU1): Consider elements $J^1$ and $J^2$ of Bic($T$) which cover $J^1_*$ and $J^2_*$, respectively. Notice that $(J^1)^c$ and $(J^2)^c$ are meet-irreducibles of $\text{Bic}(T)$ which are covered by $(J^1_*)^c$ and $(J^2_*)^c$, respectively. If $\widetilde{\lambda}(J^1_*, J^1)=\widetilde{\lambda}(J^2_*, J^2)$ and we denote this label by $s_\mathcal{D}$, then $\widetilde{\lambda}((J^1)^c, (J^1_*)^c)=\widetilde{\lambda}((J^2)^c, (J^2_*)^c)$ and the common label is $s_{\mathcal{D}^\prime}$ where $\mathcal{D}\cap \mathcal{D}^\prime = \emptyset.$ This contradicts property (CU1).
\end{proof}

\begin{figure}
\vcenteredhbox{\includegraphics[scale=1.2]{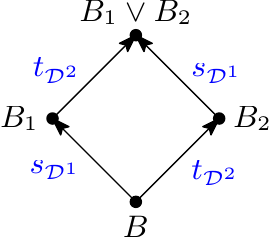}} \ \ \ \ \ \ \vcenteredhbox{\includegraphics[scale=1.2]{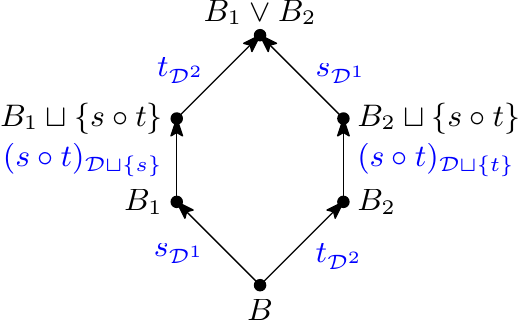}}
\caption{{The two forms of the interval $[B,B_1\vee B_2]$ of $\text{Bic}(T)$. The labels on the covering relations as defined by the labeling $\widetilde{\lambda}: \text{Cov}(\text{Bic}(T)) \to \mathcal{S}_T$ are in blue. The set $\mathcal{D}^1$ (resp., $\mathcal{D}^2$) consists of all splits of $s$ (resp., $t$) belonging to $B$. Similarly, the set $\mathcal{D}$ consists of all splits of $s\circ t$ that belong to $B$.}}
\label{polygons_fig}
\end{figure}



We conclude this section by classifying the join- and meet-irreducible biclosed sets. We do this by choosing a label $s_\mathcal{D} \in \mathcal{S}_T$ and constructing the minimal biclosed set $B$ where $\widetilde{\lambda}_\downarrow(B) = \{s_\mathcal{D}\}$ and by constructing the maximal biclosed set where $\widetilde{\lambda}^\uparrow(B) = \{s_\mathcal{D}\}.$

Given $s_\mathcal{D} \in \mathcal{S}_T$, define $$J(s_\mathcal{D}) := \{s\} \sqcup \mathcal{D} \sqcup \bigcup_{t \in \mathcal{D}} S(t)$$ where $S(t) = S(t,\mathcal{D}) \subset \text{Seg}(T)$ is defined to be the set of all splits $s^\prime$ of $t$ satisfying the following:
\begin{itemize}
\item[i)] segment $s^\prime$ is not a split of $s$, and
\item[ii)] segment $s^\prime$ is not composable with any segment in $\mathcal{D}$.
\end{itemize} 

\begin{example}
We give an example of a set $J(s_\mathcal{D})$ in Figure~\ref{label_fig}. Here we have that
$$\begin{array}{rcl}
s & = & [9,8] \\
\mathcal{D} & = & \{[9,2], [9,4], [6,8], [3,8], [7,8]\} \\
S([9,2]) & = & \{[6,2]\} \\
S([9,4]) & = & \{[6,4], [3,4]\} \\
S([6,8]) & = & \{[6,2], [6,4]\} \\
S([3,8]) & = & \{[3,4]\}\\
S([7,8]) & = & \emptyset.
\end{array}$$
\end{example} 

\begin{lemma}\label{Lemma_J_biclosed}
The set $J(s_\mathcal{D})$ is biclosed. In fact, no two elements of $J(s_\mathcal{D})$ are composable.
\end{lemma}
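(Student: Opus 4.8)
The plan is to establish the stronger ``in fact'' statement first, because it yields closedness of $J(s_\mathcal{D})$ immediately: if no two elements are composable, then the defining closure condition holds vacuously. It then remains only to check that the complement of $J(s_\mathcal{D})$ is closed, i.e.\ that $J(s_\mathcal{D})$ is coclosed. Two observations organize everything. First, every element of $J(s_\mathcal{D})$ is a subsegment of $s$: the splits in $\mathcal{D}$ are proper subsegments of $s$, and each element of $S(t)$ is a split of some $t \in \mathcal{D}$, hence again a subsegment of $s$. Writing $s = (v_0, v_1, \ldots, v_{m+1})$, every element of $J(s_\mathcal{D})$ is therefore an interval $[v_i, v_j]$ on the path $s$, and two such intervals are composable precisely when they are adjacent, meeting in a single vertex. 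Second, $\mathcal{D}$ meets each of the $m$ breaks of $s$ in exactly one split: there are $m$ breaks, $|\mathcal{D}| = m$, each split lies in a unique break, and no two elements of $\mathcal{D}$ share a break. Hence for each faultline $v_k$ exactly one of $[v_0, v_k]$ and $[v_k, v_{m+1}]$ lies in $\mathcal{D}$; I call this the one-split-per-break property.

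For non-composability I would argue by cases on a pair of elements. The segment $s$ is not composable with any of its proper subsegments, since such a subsegment either shares no endpoint with $s$ or meets $s$ in at least two vertices. Two splits of $s$ in $\mathcal{D}$ are composable only if they form a break of $s$, which the one-split-per-break property excludes, and an element of $\mathcal{D}$ is never composable with an element of any $S(t)$ by condition (ii). The crux is a pair $u \in S(t)$, $w \in S(t')$. Taking $t = [v_0, v_a]$ a left split (the right-split case is symmetric under reversing the orientation of $s$), condition (i) forces $u$ to touch the interior endpoint $v_a$ of $t$, so $u = [v_b, v_a]$ with $0 < b < a$. Since $[v_0, v_b]$ composes with $u$ to give $t$, condition (ii) gives $[v_0, v_b] \notin \mathcal{D}$, and hence $[v_b, v_{m+1}] \in \mathcal{D}$ by the one-split-per-break property. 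Now any $w$ composable with $u$ shares exactly one endpoint, $v_a$ or $v_b$, with it; in the first case $w$ composes with $t \in \mathcal{D}$ and in the second with $[v_b, v_{m+1}] \in \mathcal{D}$, each contradicting condition (ii) for $w$. Thus no two elements of $J(s_\mathcal{D})$ are composable.

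For coclosedness I would show that for every $c \in J(s_\mathcal{D})$ and every decomposition $c = a \circ b$ into composable segments, at least one of $a, b$ lies in $J(s_\mathcal{D})$. As $c$ is an interval of $s$, such a decomposition splits $c$ at an interior vertex $v_i$. If $c = s$, then $\{a, b\}$ is a break of $s$, so one piece lies in $\mathcal{D}$ by the one-split-per-break property. If $c = [v_0, v_j] \in \mathcal{D}$ is a left split, write $a = [v_0, v_i]$ and $b = [v_i, v_j]$; either $a \in \mathcal{D}$, or $[v_i, v_{m+1}] \in \mathcal{D}$ and a short check, using that $[v_j, v_{m+1}] \notin \mathcal{D}$ since $c \in \mathcal{D}$, shows $b \in S([v_i, v_{m+1}])$. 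If $c = [v_b, v_a] \in S(t)$, decompose $c = [v_b, v_i] \circ [v_i, v_a]$; the break partner at $v_i$ dictates which piece lands in $\mathcal{D}$, and conditions (i) and (ii) for the resulting $S(\cdot)$ are verified using the memberships $[v_a, v_{m+1}] \notin \mathcal{D}$ and $[v_0, v_b] \notin \mathcal{D}$ already established. The right-split subcases follow by symmetry.

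The main obstacle I anticipate is the bookkeeping in the two genuinely two-sided situations: excluding composability of $u \in S(t)$ with $w \in S(t')$, and verifying coclosedness at an element of some $S(t)$. Both rest on the same mechanism, namely converting a failure of composability with $\mathcal{D}$ (condition (ii)) into membership statements about break partners via the one-split-per-break property. The care needed is in enumerating at which endpoint a prospective composition occurs and checking that every possibility either overlaps --- so is not a genuine composition --- or yields a composition with an element of $\mathcal{D}$, contradicting condition (ii). Once the reduction to intervals of $s$ and the one-split-per-break property are in hand, each subcase reduces to a short, mechanical verification.
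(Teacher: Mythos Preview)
Your proposal is correct and follows essentially the same approach as the paper: first show no two elements of $J(s_\mathcal{D})$ are composable (yielding closedness vacuously), then verify coclosedness by checking that every break of every element has one side in $J(s_\mathcal{D})$. Your use of explicit vertex coordinates $[v_i,v_j]$ along $s$ and the ``one-split-per-break'' observation is a clean way to organize the casework; the paper argues the $S(t)\times S(t')$ case more abstractly via whether $t$ and $t'$ overlap, while you show directly that anything composable with $u\in S(t)$ must compose with either $t$ or $[v_b,v_{m+1}]$, both in $\mathcal{D}$. One slip to fix: in the $c\in S(t)$ coclosedness case you write ``which piece lands in $\mathcal{D}$,'' but neither piece is a split of $s$; you mean that the break partner at $v_i$ determines which piece lands in some $S(\cdot)$, as your next clause makes clear.
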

\begin{proof}
We show that $J(s_\mathcal{D})$ is closed by showing that no two elements are composable. For any element $t \in \mathcal{D}$, no two elements of $\{s\}\sqcup \mathcal{D} \sqcup S(t)$ are composable. Thus it is enough to show that given $t_1, t_2 \in \mathcal{D},$ any two segments $s_1 \in S(t_1)$ and $s_2 \in S(t_2)$ are not composable.  

Suppose that $t_1$ and $t_2$ do not agree along a segment. If $s_1$ and $s_2$ are composable, then so are $t_1$ and $t_2$. This contradicts the definition of $\mathcal{D}$.

Now suppose that $t_1$ and $t_2$ agree along a segment. If $s_1$ and $s_2$ are composable, the definition of $\mathcal{D}$ implies that there exists $d \in \mathcal{D}$ where either $s_1$ and $d$ are composable or $s_2$ and $d$ are composable. This contradicts the definition of $S(t_1)$ or that of $S(t_2)$. We conclude that $J(s_\mathcal{D})$ is closed.

Next, we show that $J(s_\mathcal{D})$ is coclosed by showing that given any element $s_1 \in J(s_\mathcal{D})$ and any break $\{s^\prime, s^{\prime\prime}\}$ of $s$, one has $s^\prime\in J(s_\mathcal{D})$ or $s^{\prime\prime} \in J(s_\mathcal{D})$. It is enough to show that this is true for elements of $J(s_\mathcal{D})\backslash\{s\}$.

Assume $s_1 \in \mathcal{D}$ and $s_1 = s^\prime\circ s^{\prime\prime}$ where $s^{\prime}$ is not a split of $s$. Since $s_1 \in \mathcal{D}$, the definition of $\mathcal{D}$ implies that there is no split of $s$ that belongs to $\mathcal{D}$ and is composable with $s^\prime$. Thus either $s^{\prime\prime} \in \mathcal{D}$ or no element of $\mathcal{D}$ is composable with $s^\prime$. Therefore, either $s^{\prime\prime} \in \mathcal{D} \subset J(s_\mathcal{D})$ or $s^\prime \in S(s_1, \mathcal{D}) \subset J(s_\mathcal{D})$.


Next, assume that $s_1 \in S(t_1)$ for some $t_1 \in \mathcal{D}$ and $s_1 = s^\prime\circ s^{\prime\prime}$ where $s^{\prime}$ is not a split of $t_1$. This implies that either $s^{\prime\prime} \in J(s_\mathcal{D})\backslash(\{s\} \sqcup \mathcal{D})$ or $s^{\prime\prime} \not \in J(s_{\mathcal{D}})$. We therefore assume $s^{\prime\prime} \not \in J(s_\mathcal{D})$. This implies that $s^{\prime\prime} \not \in S(t_1)$ so there exists $t^{\prime\prime} \in \mathcal{D}$ such that $s^{\prime\prime}\circ t^{\prime\prime} \in \text{Seg}(T)$. It follows that exactly one of the following holds:
\begin{itemize}
\item segments $s^\prime$ and $t^{\prime\prime}$ {have no common vertices}, or
\item segment $s^\prime$ is a split of $t^{\prime\prime}$.
\end{itemize}
The former case is impossible because $t_1 \in \mathcal{D}$ and $s = t_1 \circ t^{\prime\prime}$. 

We show that $s^\prime \in S(t^{\prime\prime})$. Suppose there exists $t \in \mathcal{D}$ such that $s^\prime$ and $t$ are composable. Either segments $t$ and $t^{\prime\prime}$ are composable or segments $t$ and $s_1$ are composable. Since $t^{\prime\prime} \in \mathcal{D}$ and $s_1 \in S(t_1)$, we obtain a contradiction. Thus $J(s_{\mathcal{D}})$ is coclosed. 
\end{proof}

\begin{proposition}\label{Prop_join_irr_des}
The set $J(s_\mathcal{D})$ satisfies $\widetilde{\lambda}_\downarrow(J(s_\mathcal{D})) = \{s_\mathcal{D}\}.$ Moreover, any biclosed set $B$ with $s_\mathcal{D} \in \widetilde{\lambda}_\downarrow(B)$ satisfies $J(s_\mathcal{D}) \le B$, and the reverse containment holds if and only if $\widetilde{\lambda}_{\downarrow}(B) = \{s_\mathcal{D}\}$. Consequently, the set map $J: \mathcal{S}_T \to \text{JI}(\text{Bic}(T))$ is a bijection.
\end{proposition}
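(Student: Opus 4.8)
The plan is to establish the proposition in four movements, tracking the three separate claims: (i) the computation $\widetilde{\lambda}_\downarrow(J(s_\mathcal{D})) = \{s_\mathcal{D}\}$, (ii) the minimality statement that $s_\mathcal{D} \in \widetilde{\lambda}_\downarrow(B)$ forces $J(s_\mathcal{D}) \le B$ together with the ``reverse containment'' refinement, and (iii) the bijectivity of $J$. The cleanest route is to first prove (i) and (ii) directly from the combinatorics of $J(s_\mathcal{D})$, and then deduce (iii) as a formal consequence of Lemma~\ref{cu-label_ji_mi}, which already guarantees that the minimal biclosed set $B$ with $s_\mathcal{D} \in \widetilde{\lambda}_\downarrow(B)$ is the unique join-irreducible with label $s_\mathcal{D}$ under $\lambda$. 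Since the target $\mathcal{S}_T$ is precisely the image of the CU-labeling $\widetilde{\lambda}$, identifying $J(s_\mathcal{D})$ with that minimal element is what ties everything together.

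For step (i), I would use that $J(s_\mathcal{D})$ is biclosed (Lemma~\ref{Lemma_J_biclosed}) and that no two of its elements are composable. Given $B' \lessdot J(s_\mathcal{D})$, Theorem~\ref{bic_structure}(3) says the covering label is the unique segment $u$ with $J(s_\mathcal{D}) \setminus B' = \{u\}$; I must show $u = s$ and that the full label $\widetilde{\lambda}(B', J(s_\mathcal{D}))$ equals $s_\mathcal{D}$, i.e. the splits of $u$ lying in $B'$ are exactly $\mathcal{D}$. The key observation is that removing any $t \in \mathcal{D}$ or any element of some $S(t)$ cannot yield a coclosed complement, because such an element sits inside a break whose other half is absent — so these elements are forced to remain, and the only removable element is $s$ itself. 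Then, since by construction the splits of $s$ inside $J(s_\mathcal{D})\setminus\{s\}$ are exactly $\mathcal{D}$ (the sets $S(t)$ contain no splits of $s$ by condition (i) in the definition), the label is $s_{\mathcal{D}}$. This identifies $J(s_\mathcal{D})$ as join-irreducible with $\widetilde{\lambda}_\downarrow = \{s_\mathcal{D}\}$.

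For step (ii), suppose $s_\mathcal{D} \in \widetilde{\lambda}_\downarrow(B)$, so there is a cover $B' \lessdot B$ with label $s_\mathcal{D}$, meaning $s \in B$, the splits of $s$ in $B$ are exactly $\mathcal{D}$, and $B' = B \setminus \{s\}$. I would argue $J(s_\mathcal{D}) \subseteq B$ elementwise: clearly $s \in B$ and $\mathcal{D} \subseteq B$; for $s' \in S(t)$ with $t \in \mathcal{D}$, I would use closure of $B$ together with condition (ii) (that $s'$ is composable with no element of $\mathcal{D}$, in particular with the complementary split of $t$) to force $s' \in B$ — otherwise the complementary split would have to lie in $B^c$, and composing appropriately in the closed set $B$ or its closed complement $B^c$ produces a contradiction with the recorded split-set $\mathcal{D}$. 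The ``reverse containment'' clause is the statement that $B \subseteq J(s_\mathcal{D})$ holds precisely when $\widetilde{\lambda}_\downarrow(B) = \{s_\mathcal{D}\}$; one direction is step (i), and the other follows because any additional label in $\widetilde{\lambda}_\downarrow(B)$ would, by the minimality just proven, force a second join-irreducible below $B$ and hence an element outside $J(s_\mathcal{D})$.

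Finally, for bijectivity of $J \colon \mathcal{S}_T \to \text{JI}(\text{Bic}(T))$: injectivity is immediate since $\widetilde{\lambda}_\downarrow(J(s_\mathcal{D})) = \{s_\mathcal{D}\}$ recovers $s_\mathcal{D}$ from $J(s_\mathcal{D})$. Surjectivity follows from Lemma~\ref{cu-label_ji_mi} applied to the CU-labeling $\widetilde{\lambda}$ (established in Theorem~\ref{cuproof}): every join-irreducible $j$ has a single downward label $\widetilde{\lambda}(j_*, j) = s_\mathcal{D}$ for some $s_\mathcal{D} \in \mathcal{S}_T$, and the lemma identifies $j$ as the \emph{minimal} biclosed set containing $s_\mathcal{D}$ in its down-set, which is exactly $J(s_\mathcal{D})$ by step (ii). I expect the main obstacle to be step (ii): carefully ruling out the possibility that some $s' \in S(t)$ fails to lie in $B$ requires chasing the interaction between closure in $B$, coclosure (closure of $B^c$), and the precise conditions (i) and (ii) defining $S(t)$, and it is here that the composability bookkeeping is most delicate.
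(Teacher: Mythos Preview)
Your proposal is correct and follows essentially the same route as the paper. Step~(i) is exactly the paper's argument: $J(s_\mathcal{D})\setminus\{s\}$ is biclosed, while $J(s_\mathcal{D})\setminus\{t\}$ fails coclosure for any $t\neq s$ because the other half of the relevant break is already absent. Step~(ii) is also the paper's argument, though the paper phrases the key contradiction slightly more directly: if $s'\in S(t)$ were missing from $B$, coclosure of $B$ forces the complementary split $s''$ (with $t=s'\circ s''$) into $B$; but $s''$ is a split of $s$ not in $\mathcal{D}$ (precisely by condition~(ii) defining $S(t)$), contradicting that $\mathcal{D}$ records \emph{all} splits of $s$ in $B$. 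Your version via closure of $B^c$ is the same argument in contrapositive form.

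The only place you work harder than necessary is the ``reverse containment'' clause. Once you have $J(s_\mathcal{D})\le B$ from step~(ii), the equivalence $B\le J(s_\mathcal{D}) \iff \widetilde{\lambda}_\downarrow(B)=\{s_\mathcal{D}\}$ reduces to $B=J(s_\mathcal{D}) \iff \widetilde{\lambda}_\downarrow(B)=\{s_\mathcal{D}\}$, and both directions are then immediate from step~(i) together with axiom~(CU1) (or Lemma~\ref{cu-label_ji_mi}): there is a \emph{unique} join-irreducible with down-label $s_\mathcal{D}$. The detour through ``a second join-irreducible below $B$ forces an element outside $J(s_\mathcal{D})$'' is unnecessary and, as stated, not quite complete---you would still need to argue that this second join-irreducible is not contained in $J(s_\mathcal{D})$. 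The paper simply invokes Lemma~\ref{cu-label_ji_mi} for this step and for bijectivity.
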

\begin{proof}
By Lemma~\ref{Lemma_J_biclosed}, $J(s_\mathcal{D})\backslash\{s\}$ is biclosed so $s_\mathcal{D} \in \widetilde{\lambda}_\downarrow(J(s_\mathcal{D}))$. Also, the set $J(s_\mathcal{D})\backslash \{t\}$ is not coclosed for any $t \neq s$ so $\widetilde{\lambda}_\downarrow(J(s_\mathcal{D})) = \{s_\mathcal{D}\}$. Thus $J(s_{\mathcal{D}}) \in \text{JI}(\text{Bic}(T)).$

Let $B \in \text{Bic}(T)$ be such that $s_\mathcal{D} \in \widetilde{\lambda}_\downarrow(B).$ This implies that $\{s\}\sqcup \mathcal{D} \subset B$.

Now let $s_1 \in S(t)$ for some $t \in \mathcal{D}$ and suppose that $s_1\not \in B$. Since $B \in \text{Bic}(T)$, there exists $s_1^\prime \in B$ such that $t = s_1\circ s_1^\prime$. Observe that $s_1^\prime \not \in \mathcal{D}$, but $s_1^\prime$ is a split of $s$. This means that the set of splits of $s$ that belong to $B$ is strictly larger than $\mathcal{D}$. This contradicts that $s_\mathcal{D} \in \widetilde{\lambda}_\downarrow(B).$ We conclude that $J(s_\mathcal{D}) \le B.$

The remaining assertions now follow from Lemma~\ref{cu-label_ji_mi}.\end{proof}

Next, we classify the meet-irreducible biclosed sets. Given $s_\mathcal{D} \in \mathcal{S}_T$, define $$M(s_\mathcal{D}) := J(s_\mathcal{D})\backslash\{s\} \sqcup \{t \in \text{Seg}(T): \ \text{$t \not \subseteq s$}\}\sqcup \bigcup_{t \in \mathcal{D}}R(t)$$ where $R(t) = R(t,\mathcal{D}) \subset \text{Seg}(T)$ is defined to be the set of all splits $s^\prime$ of $t$ satisfying the following:
\begin{itemize}
\item[i)] segment $s^\prime$ is not a split of $s$, and 
\item[ii)] segment $s^\prime$ is composable with some element of $\mathcal{D}$. 
\end{itemize}
Observe that if $s^\prime \in R(t)$ for some $t \in \mathcal{D}$, then there is necessarily a unique element $t^\prime \in \mathcal{D}$ where $t^\prime \subseteq t$, $t^\prime$ is composable with $s^\prime$, and $s^\prime\circ t^\prime = t$.

\begin{proposition}\label{Prop_meet_irr_des}
The set $M(s_\mathcal{D})$ satisfies $\widetilde{\lambda}^\uparrow(M(s_\mathcal{D})) = \{s_\mathcal{D}\}.$ Moreover, any biclosed set $B$ with $s_\mathcal{D} \in \widetilde{\lambda}^\uparrow(B)$ satisfies $B \le M(s_\mathcal{D})$, and the reverse containment holds if and only if $\widetilde{\lambda}^{\uparrow}(B) = \{s_\mathcal{D}\}$. Consequently, the set map $M: \mathcal{S}_T \to \text{MI}(\text{Bic}(T))$ is a bijection.
\end{proposition}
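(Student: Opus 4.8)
The plan is to derive the proposition from the join-irreducible case (Proposition~\ref{Prop_join_irr_des}) by means of the order-reversing involution $(-)^c$, reducing the entire statement to a single set-theoretic identity. The starting observation is that for a covering relation $B \lessdot B \sqcup \{s\}$ the splits of $s$ lying in $B$ form a \emph{transversal} of the breaks of $s$: closedness of $B$ forbids two splits of a common break (their composition is $s\notin B$), while closedness of $(B\sqcup\{s\})^c$ forces at least one, so $\mathcal{D}$ selects exactly one split from each break. Writing $\mathcal{D}'$ for the complementary transversal (the other split of each break), we have $s_{\mathcal{D}'} \in \mathcal{S}_T$ and $(\mathcal{D}')' = \mathcal{D}$.

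First I would record the behavior of the labeling under complementation, which is already implicit in the proof of Theorem~\ref{cuproof} (CU1): if $J \in \text{JI}(\text{Bic}(T))$ has $\widetilde{\lambda}(J_*, J) = s_{\mathcal{D}'}$, then $J^c \in \text{MI}(\text{Bic}(T))$ is covered by $(J_*)^c = J^c \sqcup \{s\}$ and $\widetilde{\lambda}(J^c, (J_*)^c) = s_{\mathcal{D}}$, since the splits of $s$ in $J^c$ are precisely those not in $\mathcal{D}'$, namely $\mathcal{D}$. Combined with Proposition~\ref{Prop_join_irr_des} and Remark~\ref{joinmeet_bijection}, this shows that $J(s_{\mathcal{D}'})^c$ is a meet-irreducible biclosed set with $\widetilde{\lambda}^\uparrow(J(s_{\mathcal{D}'})^c) = \{s_\mathcal{D}\}$, and by Lemma~\ref{cu-label_ji_mi} such a meet-irreducible is unique. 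Consequently the whole proposition follows once we prove the identity $M(s_\mathcal{D}) = J(s_{\mathcal{D}'})^c$: Lemma~\ref{cu-label_ji_mi} then gives $\widetilde{\lambda}^\uparrow(M(s_\mathcal{D})) = \{s_\mathcal{D}\}$ together with the maximality of $M(s_\mathcal{D})$ among biclosed sets $B$ with $s_\mathcal{D} \in \widetilde{\lambda}^\uparrow(B)$; the reverse-containment characterization follows because $M(s_\mathcal{D}) \le B \le M(s_\mathcal{D})$ forces $B = M(s_\mathcal{D})$; and bijectivity of $M$ follows from that of $J$ together with the involution $\mathcal{D} \leftrightarrow \mathcal{D}'$.

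It therefore remains to verify $M(s_\mathcal{D})^c = J(s_{\mathcal{D}'})$, and this is the crux. Every segment $t \not\subseteq s$ lies in $M(s_\mathcal{D})$ and not in $J(s_{\mathcal{D}'})$, so both sides consist of subsegments of $s$; among these, unwinding the definitions shows that $s$ itself and the transversal $\mathcal{D}'$ lie in both, while $\mathcal{D}$ lies in neither. The only remaining segments are the interior subsegments $u = [v_i, v_j]$ of $s$ (sharing no endpoint with $s$). Since $S(t) \cup R(t)$ is exactly the set of splits of $t$ that are not splits of $s$, one checks that $u \in M(s_\mathcal{D})$ if and only if $[v_0, v_j] \in \mathcal{D}$ or $[v_i, v_{m+1}] \in \mathcal{D}$, these being the only elements of $\mathcal{D}$ of which $u$ can be a split. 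On the other side, $u \in J(s_{\mathcal{D}'})$ requires $u$ to be a split of some element of $\mathcal{D}'$ and to be composable with none; a short composability analysis (the only elements of $\mathcal{D}'$ meeting $u$ appropriately are $[v_0, v_i]$ and $[v_j, v_{m+1}]$) shows this happens exactly when $[v_0, v_j] \notin \mathcal{D}$ and $[v_i, v_{m+1}] \notin \mathcal{D}$. These two conditions are complementary, giving $u \in M(s_\mathcal{D})^c \iff u \in J(s_{\mathcal{D}'})$ and completing the identity.

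The main obstacle is precisely this interior-subsegment bookkeeping: one must track how the transversal $\mathcal{D}$ splits each interior subsegment and confirm that the composability conditions defining $S$ and $R$ dovetail, under complementation, with those defining $S(\,\cdot\,, \mathcal{D}')$. A fully self-contained alternative would instead mirror Lemma~\ref{Lemma_J_biclosed} and the proof of Proposition~\ref{Prop_join_irr_des}, showing directly that $M(s_\mathcal{D})$ is biclosed and that $\widetilde{\lambda}^\uparrow(M(s_\mathcal{D})) = \{s_\mathcal{D}\}$ before appealing to Lemma~\ref{cu-label_ji_mi}; this avoids complementation but duplicates a comparable case analysis, so I prefer the route above.
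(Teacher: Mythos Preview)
Your proposal is correct and follows essentially the same route as the paper: reduce to Proposition~\ref{Prop_join_irr_des} via the complementation involution $(-)^c$, and then verify the set identity $M(s_\mathcal{D}) = J(s_{\mathcal{D}'})^c$ for the complementary transversal $\mathcal{D}'$. Your coordinate-based bookkeeping on interior subsegments is a slightly different (and arguably cleaner) way to package the case analysis, but the substance matches the paper's argument.
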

\begin{proof}
We show that $J(s_{\mathcal{D}^\prime})^c = M(s_\mathcal{D})$ where $t^\prime \in \mathcal{D}^\prime$ if and only if $s = t\circ t^\prime$ for some $t \in \mathcal{D}$. Observe that $\widetilde{\lambda}^{\uparrow}(J(s_{\mathcal{D}^\prime})^c) = \{s_{\mathcal{D}}\}$. By Remark~\ref{joinmeet_bijection}, the remaining assertions may then be concluded from Proposition~\ref{Prop_join_irr_des}.

Let $t_1 \in J(s_{\mathcal{D}^\prime})^c$ and we show that $t_1 \in M(s_\mathcal{D})$. By the defintition of $J(s_{\mathcal{D}^\prime})$, it is enough to assume that $t_1 \subseteq s$ and that $t_1$ is not a split of $s$. 

We first show that $t_1$ is a split of some segment $t \in \mathcal{D}$. Suppose $t_1$ is not a split of any segment in $\mathcal{D}$. Then there exists $t_2, t_3 \in \mathcal{D}$ such that $t_2\circ t_1, t_1\circ t_3 \in J(s_{\mathcal{D}^\prime})^c$, and both of $t_2\circ t_1$ and $t_1\circ t_3$ are splits of $s$. This shows that the set of splits of $s$ belonging to $J(s_{\mathcal{D}^\prime})^c$ is strictly larger than $\mathcal{D}$. This contradicts that $\widetilde{\lambda}^\uparrow(J(s_{\mathcal{D}^\prime})^c) = \{s_{\mathcal{D}}\}$.

Now let $t \in \mathcal{D}$ be such that $t_1$ is a split of $t$. If there exists $t_1^\prime \in \mathcal{D}^\prime$ such that $t_1$ is a split of $t_1^\prime$, then there exists $t_2 \in \mathcal{D}$ such that $s = t_1^\prime \circ t_2$ and $t_1$ is composable with $t_2$. Thus $t_1 \in R(t,\mathcal{D})$. If no such segment $t_1^\prime \in \mathcal{D}^\prime$ exists, then $t_1 \in J(s_\mathcal{D})\backslash\{s\}$. In either case, $t_1 \in M(s_\mathcal{D})$.

Conversely, assume that $t_1 \in M(s_{\mathcal{D}})$. It is clear that $J(s_\mathcal{D})\backslash\{s\} \sqcup \{t \in \text{Seg}(T): \ t \not \subseteq s\} \subset J(s_{\mathcal{D}^\prime})^c$ so we assume that $t_1 \in R(t, \mathcal{D})$ for some $t \in \mathcal{D}$. Let $t_2 \in \mathcal{D}$ be the unique split of $s$ that is composable with $t_1$. Now note that there is a unique split $t_2^\prime \in \mathcal{D}^\prime$ with the property that $t_2^\prime \circ t_2 = s$ and $t_1$ is a split of $t_2^\prime$. 

We claim that $t_1 \not \in S(t_2^\prime, \mathcal{D}^\prime)$. To see this, observe that since $t \in \mathcal{D}$, the unique segment $t^\prime$ with property that $t^\prime \circ t = s$ belongs to $\mathcal{D}^\prime$. Moreover, $t_1$ and $t^\prime$ are composable since $t_1$ is a split of $t$. Since $t_1 \not \in S(t_2^\prime, \mathcal{D}^\prime)$, we have that $t_1 \in J(s_{\mathcal{D}^\prime})^c.$\end{proof}

\section{Canonical join complex of $\text{Bic}(T)$}\label{Sec_cjc}

In this section, we describe the faces of the canonical join complex of $\text{Bic}(T)$. In \cite[Theorem 1.1]{barnard2016canonical}, it is shown that the canonical join complex of a finite semidistributive lattice $L$ is a \textbf{flag complex}. That is, the minimal nonfaces of $\Delta^{CJ}(L)$ have size two. Thus, it is enough to classify the pairs of elements of $\text{JI}(\text{Bic}(T))$ that join canonically.

\begin{theorem}\label{Thm_CJC}
A collection $\{J(s^1_{\mathcal{D}^1}), \ldots, J(s^k_{\mathcal{D}^k})\} \subset \text{JI}(\text{Bic}(T))$ is a face of $\Delta^{CJ}(\text{Bic}(T))$ if and only if labels $s^i_{\mathcal{D}^i}$ and $s^j_{\mathcal{D}^j}$ satisfy the following:
\begin{itemize}
\item[1)] segments $s^i$ and $s^j$ are distinct,
\item[2)] neither $s^i$ nor $s^j$ is expressible as a composition of at least two segments in $J(s^i_{\mathcal{D}^i})\cup J(s^j_{\mathcal{D}^j})$, and
\item[3)] neither $J(s^i_{\mathcal{D}^i}) \le J(s^{j}_{\mathcal{D}^j})$ nor $J(s^{j}_{\mathcal{D}^j}) \le J(s^i_{\mathcal{D}^i})$.
\end{itemize}
for any distinct $i,j \in \{1,\ldots, k\}$.
\end{theorem}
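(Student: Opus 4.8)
The plan is to use the combinatorial tools established in the preceding sections, especially the characterization of canonical join representations via the CU-labeling. By Lemma~\ref{Lemma_cjr_cmr}, a set $\{J(s^1_{\mathcal{D}^1}), \ldots, J(s^k_{\mathcal{D}^k})\}$ is a face of $\Delta^{CJ}(\text{Bic}(T))$ precisely when it equals the canonical join representation of its join $B := \bigvee_i J(s^i_{\mathcal{D}^i})$; equivalently, when $\widetilde{\lambda}_\downarrow(B) = \{s^1_{\mathcal{D}^1}, \ldots, s^k_{\mathcal{D}^k}\}$. Since \cite[Theorem 1.1]{barnard2016canonical} guarantees that $\Delta^{CJ}(\text{Bic}(T))$ is a flag complex, it suffices to handle $k=2$: I will show that $\{J(s^i_{\mathcal{D}^i}), J(s^j_{\mathcal{D}^j})\}$ is a face if and only if conditions (1), (2), (3) hold for the pair $i,j$, and the general statement then follows immediately from flagness.

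For the two-element case, first I would compute the join explicitly using Lemma~\ref{Join_Description}, namely $J(s^i_{\mathcal{D}^i}) \vee J(s^j_{\mathcal{D}^j}) = \overline{J(s^i_{\mathcal{D}^i}) \cup J(s^j_{\mathcal{D}^j})}$. The pair forms a face iff $\widetilde{\lambda}_\downarrow$ of this join is exactly $\{s^i_{\mathcal{D}^i}, s^j_{\mathcal{D}^j}\}$. I would analyze when the irredundant join representation fails to be canonical by translating each of the failure modes into the combinatorics of segments. Condition (3) simply rules out redundancy: if $J(s^i_{\mathcal{D}^i}) \le J(s^j_{\mathcal{D}^j})$, the join collapses to a single join-irreducible and the representation is not irredundant, so it cannot be a two-element face. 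Condition (1) is a genuine necessary constraint on vertices of the simplicial complex: if $s^i = s^j$, then by Proposition~\ref{Prop_join_irr_des} the map $J$ is a bijection only when the labels differ in their break data, and I would show that two join-irreducibles sharing the same underlying segment $s$ cannot appear together in a canonical join representation (their joins interact so that one label absorbs the other in $\widetilde{\lambda}_\downarrow$).

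The heart of the argument, and the main obstacle, is condition (2). The key point is that $\widetilde{\lambda}_\downarrow(B)$ may contain labels other than $s^i_{\mathcal{D}^i}$ and $s^j_{\mathcal{D}^j}$ precisely when the closure operation in $\overline{J(s^i_{\mathcal{D}^i}) \cup J(s^j_{\mathcal{D}^j})}$ produces new segments via composition. If some segment $s^i$ (or $s^j$) is itself expressible as a composition $u_1 \circ \cdots \circ u_m$ of $m \ge 2$ segments drawn from $J(s^i_{\mathcal{D}^i}) \cup J(s^j_{\mathcal{D}^j})$, then the element of $B$ realizing $s^i$ is not freshly added at a cover labeled $s^i_{\mathcal{D}^i}$ but instead arises as a composite already forced by closure, and I would argue that this changes the set of splits of $s^i$ lying below $s^i$ in $B$, so that $s^i_{\mathcal{D}^i}$ no longer lies in $\widetilde{\lambda}_\downarrow(B)$. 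Conversely, if (2) holds, then neither $s^i$ nor $s^j$ is a composite from the union, and I would verify using Lemma~\ref{Lemma_J_biclosed} (that no two elements of a single $J(s_\mathcal{D})$ are composable) together with a careful inventory of which compositions can occur across the two sets that removing either $s^i$ or $s^j$ from $B$ still yields a biclosed set, so both labels survive in $\widetilde{\lambda}_\downarrow(B)$ and no extra labels appear.

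I expect the subtlety to lie in controlling the splits of $s^i$ within the closure $B$: I must rule out that the closure process introduces additional splits of $s^i$ (altering $\mathcal{D}^i$) or additional covers below $B$ labeled by segments unrelated to $s^i, s^j$. To handle this cleanly I would proceed by showing directly that under hypotheses (1)--(3) the set $B \setminus \{s^i\}$ is coclosed (so that $s^i_{\mathcal{D}^i} \in \widetilde{\lambda}_\downarrow(B)$ with the correct break data $\mathcal{D}^i$), and symmetrically for $s^j$, and that any third element removable from $B$ would force one of $s^i, s^j$ to be a nontrivial composition of segments in the union, contradicting (2). Checking that the break data is exactly $\mathcal{D}^i$ — rather than a strictly larger set of splits of $s^i$ — is where Proposition~\ref{Prop_join_irr_des} is applied, since it pins down the minimal biclosed set with prescribed $\widetilde{\lambda}_\downarrow$ and hence identifies the label uniquely.
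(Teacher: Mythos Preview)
Your overall framework matches the paper's: reduce to pairs via Barnard's flag property, then use Lemma~\ref{Lemma_cjr_cmr} to translate ``face'' into a condition on $\widetilde{\lambda}_\downarrow$ of the join. Where you diverge is in the mechanics of the two-element case.

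For the \emph{necessity} of (2), your idea is actually simpler than the paper's. If $s^i = u_1 \circ \cdots \circ u_m$ with $m \ge 2$ and each $u_r$ in the union, then $B \setminus \{s^i\}$ is not closed (the $u_r$ remain but their composite is gone), so $s^i \notin \lambda_\downarrow(B)$ at all. Your phrasing (``this changes the set of splits'') is slightly off---the point is not that the split data shifts but that $s^i$ is not removable---yet the conclusion is correct. The paper instead proves Lemma~\ref{lemma_comp_not_canonical} by exhibiting an explicit proper refinement of $J(s^i_{\mathcal{D}^i}) \vee J(s^j_{\mathcal{D}^j})$ built from the pieces $J(t^r_{\mathcal{D}(t^r)})$ of the factorization, using Lemma~\ref{Lemma_join_irr_containment}.

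For the \emph{sufficiency} direction the approaches genuinely differ. You propose to compute $\widetilde{\lambda}_\downarrow(B)$ directly: verify that $B \setminus \{s^i\}$ and $B \setminus \{s^j\}$ are biclosed with the correct split data, and that no third segment is removable. This can be pushed through---one analyzes how a segment $a \circ s^i$ with $a \in B^c$ could factor through the union and derives a contradiction with (2) or (3)---but it is laborious, and the step ``no extra labels appear'' is more delicate than you indicate. The paper (Lemma~\ref{Lemma_is_canonical}) bypasses the lower covers of $B$ entirely and works with the abstract definition of canonical join representation: after noting irredundancy from (3), it takes an arbitrary irredundant expression $B = \bigvee_r J(t^r_{\mathcal{D}^r})$ and shows $J(s^i_{\mathcal{D}^i}) \le J(t^r_{\mathcal{D}^r})$ for some $r$, using (2) to force $s^i$ together with all of $\mathcal{D}^i$ into a single $J(t^r_{\mathcal{D}^r})$, and then invoking Lemma~\ref{Lemma_join_irr_containment}. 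That lemma---once $s^i$ lies in some $J(t_{\mathcal{E}})$ with the right splits, all of $J(s^i_{\mathcal{D}^i})$ comes for free---is the key tool you do not mention; your direct approach would end up re-deriving fragments of it inside the coclosedness verification.
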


\begin{figure}
\includegraphics[scale=1]{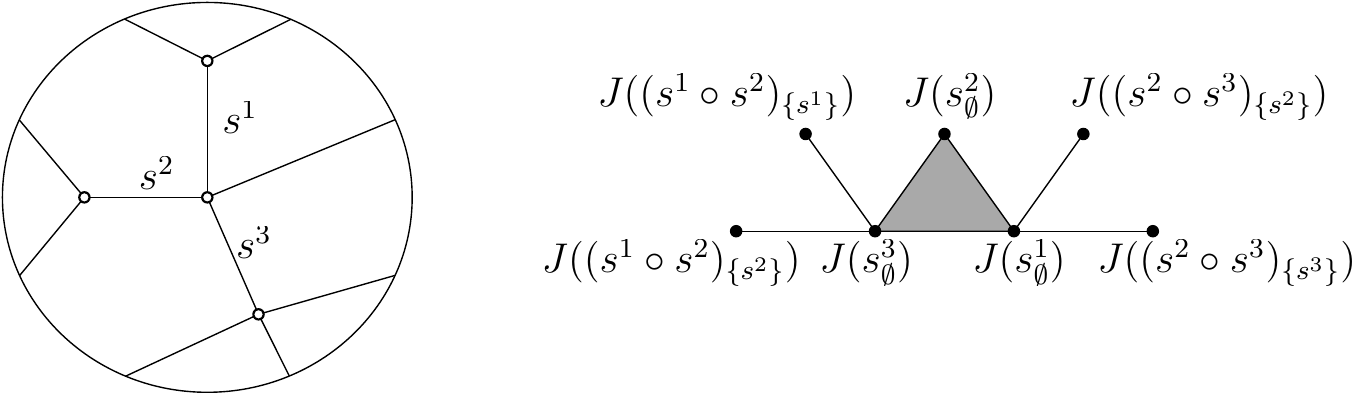}
\caption{A tree $T$ and the canonical join complex $\Delta^{CJ}(\text{Bic}(T))$. The shortest segments of $T$ are labeled $s^1, s^2,$ and $s^3$.}
\label{CJ_complex_figure}
\end{figure}

\begin{example}
Let $T$ be the tree shown in Figure~\ref{CJ_complex_figure}. In this same figure, we show the canonical join complex of the biclosed sets of $T$. Each join-irreducible of $\text{Bic}(T)$ is written next to its corresponding vertex of $\Delta^{CJ}(\text{Bic}(T))$.
\end{example}

\begin{proof}[Proof of Theorem~\ref{Thm_CJC}]
Let $\{J(s^1_{\mathcal{D}^1}), \ldots, J(s^k_{\mathcal{D}^k})\} \subset \text{JI}(\text{Bic}(T))$ where there exists distinct $i, j \in \{1,\ldots, k\}$ such that $s^i_{\mathcal{D}^i}$ and  $s^j_{\mathcal{D}^j}$ do not satisfy all of the stated properties. If $s^i = s^j$, then Lemma~\ref{nosplit} implies that there does not exist $B \in \text{Bic}(T)$ such that $s^i, s^j \in \lambda_{\downarrow}(B)$. Therefore, there does not exist $B \in \text{Bic}(T)$ such that $s^i_{\mathcal{D}}, s^j_{\mathcal{D}^\prime} \in \widetilde{\lambda}_\downarrow(B)$ for any subsets $\mathcal{D}, \mathcal{D}^\prime \subset \text{Seg}(T)$. Now by Lemma~\ref{Lemma_cjr_cmr}, we have that $J(s^i_{\mathcal{D}^i})\vee J(s^j_{\mathcal{D}^j})$ is not a canonical join representation.

Next, without loss of generality, suppose that $s^i$ may be expressed as a composition of at least two segments in $J(s^{i}_{\mathcal{D}^i}) \cup J(s^j_{\mathcal{D}^j})$. Then Lemma~\ref{lemma_comp_not_canonical} implies that $J(s^i_{\mathcal{D}^i})\vee J(s^j_{\mathcal{D}^j})$ is not a canonical join representation. By \cite[Theorem 1.1]{barnard2016canonical}, we obtain that $\{J(s^1_{\mathcal{D}^1}), \ldots, J(s^k_{\mathcal{D}^k})\}$ is not a face of $\Delta^{CJ}(\text{Bic}(T))$.

Lastly, suppose that, without loss of generality, $J(s^i_{\mathcal{D}^i}) \le J(s^{j}_{\mathcal{D}^j})$. This implies that $J(s^i_{\mathcal{D}^i}) \vee J(s^{j}_{\mathcal{D}^j}) = J(s^{j}_{\mathcal{D}^j})$ and so the expression $J(s^i_{\mathcal{D}^i}) \vee J(s^{j}_{\mathcal{D}^j})$ is not an irredundant join representation.

Conversely, suppose $\{J(s^1_{\mathcal{D}^1}), \ldots, J(s^k_{\mathcal{D}^k})\} \subset \text{JI}(\text{Bic}(T))$ and that any pair of distinct labels $s^i_{\mathcal{D}^i}$ and $s^j_{\mathcal{D}^j}$  satisfy all of the stated properties. Then Lemma~\ref{Lemma_is_canonical} implies that $J(s^i_{\mathcal{D}^i})\vee J(s^j_{\mathcal{D}^j})$ is a canonical join representation for any distinct $i,j \in \{1,\ldots, k\}$. Now, using \cite[Theorem 1.1]{barnard2016canonical}, we have that $\bigvee_{i = 1}^k J(s^i_{\mathcal{D}^i})$ is a canonical join representation. Thus $\{J(s^1_{\mathcal{D}^1}), \ldots, J(s^k_{\mathcal{D}^k})\} \subset \text{JI}(\text{Bic}(T))$ is a face of $\Delta^{CJ}(\text{Bic}(T))$.
\end{proof}

The rest of this section focuses on proving the lemmas used in the proof of Theorem~\ref{Thm_CJC}. The next two results will be used in several places in the remainder of the paper, whereas Lemmas~\ref{lemma_comp_not_canonical} and \ref{Lemma_is_canonical} are only used directly in the proof of Theorem~\ref{Thm_CJC}.

\begin{lemma}\label{Lemma_join_irr_containment}
Let $s_\mathcal{D} \in \mathcal{S}_T$, let $t \in J(s_\mathcal{D})$, and let $\mathcal{D}(t):= \{s^\prime \in J(s_\mathcal{D}): \text{$s^\prime$ is a split of $t$}\}$. Then $J(t_{\mathcal{D}(t)}) \le J(s_\mathcal{D})$. Moreover, $J(t_{\mathcal{D}(t)})$ consists of subsegments of $t$ that belong to $J(s_\mathcal{D})$.
\end{lemma}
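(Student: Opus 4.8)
The statement has two parts: that $J(t_{\mathcal{D}(t)}) \le J(s_\mathcal{D})$, and the more precise claim that $J(t_{\mathcal{D}(t)})$ consists exactly of subsegments of $t$ lying in $J(s_\mathcal{D})$. My strategy is to prove the containment directly from the explicit formula for the $J(\cdot)$ construction, comparing the defining pieces of $J(t_{\mathcal{D}(t)})$ against $J(s_\mathcal{D})$ element by element, and then to read off the ``subsegment'' refinement as a byproduct. The key structural fact I would lean on is that every segment in $J(s_\mathcal{D})$ is either $s$, or an element of $\mathcal{D}$, or a split $s' \in S(d)$ for some $d \in \mathcal{D}$; and in each case (other than $s$ itself) the segment is a \emph{subsegment} of $s$. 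This is what makes $\mathcal{D}(t)$ well-defined and lets me recognize $t_{\mathcal{D}(t)}$ as a genuine element of $\mathcal{S}_T$.

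\textbf{First steps.} I would begin by verifying that $t_{\mathcal{D}(t)} \in \mathcal{S}_T$, i.e. that $\mathcal{D}(t)$ is a legal set of splits of $t$ (no two of its elements share a break of $t$); this follows because $\mathcal{D}(t) \subseteq J(s_\mathcal{D})$ and Lemma~\ref{Lemma_J_biclosed} tells us no two elements of $J(s_\mathcal{D})$ are composable, which in particular forbids two splits of $t$ from forming a break $\{t', t''\}$ with $t' \circ t'' = t$. Next, by Proposition~\ref{Prop_join_irr_des}, to show $J(t_{\mathcal{D}(t)}) \le J(s_\mathcal{D})$ it suffices to show that $t_{\mathcal{D}(t)} \in \widetilde{\lambda}_\downarrow(J(s_\mathcal{D}))$ -- equivalently, that $J(s_\mathcal{D}) \backslash \{t\}$ is biclosed. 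Since $J(s_\mathcal{D})$ has no two composable elements, removing $t$ cannot break closedness; the content is coclosedness, and I would argue that every break $\{t', t''\}$ of $t$ already has one of its halves present in $J(s_\mathcal{D})$ by the coclosedness of $J(s_\mathcal{D})$ itself (established in Lemma~\ref{Lemma_J_biclosed}), and that removing $t$ does not disturb this since $t'$, $t''$ are proper subsegments of $t$ and hence distinct from $t$.

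\textbf{The refinement and the main obstacle.} For the ``moreover'' clause I would unwind the formula $J(t_{\mathcal{D}(t)}) = \{t\} \sqcup \mathcal{D}(t) \sqcup \bigcup_{d \in \mathcal{D}(t)} S(d, \mathcal{D}(t))$ and show each listed piece consists of subsegments of $t$ that lie in $J(s_\mathcal{D})$: the first two pieces are immediate from the definition of $\mathcal{D}(t)$, and for the third I must check that any $s' \in S(d, \mathcal{D}(t))$ is both a subsegment of $t$ (clear, as $s'$ is a split of $d \subseteq t$) and already an element of $J(s_\mathcal{D})$. The main obstacle is precisely this last inclusion $S(d, \mathcal{D}(t)) \subseteq J(s_\mathcal{D})$: I expect to need that a split $s'$ of $d$ which is non-composable with all of $\mathcal{D}(t)$ is in fact non-composable with all of $\mathcal{D}$, so that $s' \in S(d, \mathcal{D}) \subseteq J(s_\mathcal{D})$. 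The delicate point is ruling out that $s'$ could be composable with some element of $\mathcal{D}$ that is \emph{not} a split of $t$ (hence not in $\mathcal{D}(t)$); here I would use that $d$ is a subsegment of $t$ together with the geometry of how segments in $\mathcal{D}$ sit inside $s$, arguing that any such composition would force a composability relation contradicting that $J(s_\mathcal{D})$ has no composable pairs. Carefully handling this case analysis, rather than the bookkeeping in the earlier steps, is where the real work lies.
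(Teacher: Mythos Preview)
Your plan has a genuine error in the ``First steps'' paragraph. You propose to obtain $J(t_{\mathcal{D}(t)}) \le J(s_\mathcal{D})$ from Proposition~\ref{Prop_join_irr_des} by showing $t_{\mathcal{D}(t)} \in \widetilde{\lambda}_\downarrow(J(s_\mathcal{D}))$, i.e.\ that $J(s_\mathcal{D}) \setminus \{t\}$ is biclosed. But $J(s_\mathcal{D})$ is \emph{join-irreducible}: by Proposition~\ref{Prop_join_irr_des} it covers a unique element, and $\widetilde{\lambda}_\downarrow(J(s_\mathcal{D})) = \{s_\mathcal{D}\}$ is a singleton. Concretely, if $t \neq s$ then $t$ is a split of some $w \in J(s_\mathcal{D})$ (either $w = s$ when $t \in \mathcal{D}$, or $w = d \in \mathcal{D}$ when $t \in S(d,\mathcal{D})$); writing $w = t \circ t''$, the segment $t''$ is composable with $t$ and hence $t'' \notin J(s_\mathcal{D})$ by Lemma~\ref{Lemma_J_biclosed}. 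So the break $\{t,t''\}$ of $w \in J(s_\mathcal{D}) \setminus \{t\}$ has neither half in $J(s_\mathcal{D}) \setminus \{t\}$, and coclosedness fails. Your coclosedness check looks at breaks of $t$ itself, but what matters are breaks of the \emph{remaining} elements for which $t$ was the witness.

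The fix is that this detour is unnecessary: the direct element-by-element inclusion you sketch for the ``moreover'' clause already \emph{is} the proof of $J(t_{\mathcal{D}(t)}) \le J(s_\mathcal{D})$, since $\le$ in $\text{Bic}(T)$ is set containment. This is exactly how the paper proceeds: it shows $\{t\} \sqcup \mathcal{D}(t) \subseteq J(s_\mathcal{D})$ trivially and then shows $S(s',\mathcal{D}(t)) \subseteq J(s_\mathcal{D})$ for each $s' \in \mathcal{D}(t)$ by a case analysis (distinguishing $s' \in \mathcal{D}$ from $s' \in S(t,\mathcal{D})$), matching your ``main obstacle''. One further gap: the ``moreover'' clause is an \emph{equality}, and you only argue one direction. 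The paper's final paragraph separately checks that every subsegment $t' \subseteq t$ with $t' \in J(s_\mathcal{D})$ lies in $J(t_{\mathcal{D}(t)})$; you will need that as well.
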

\begin{proof}
By Lemma~\ref{Lemma_J_biclosed}, $J(s_\mathcal{D})$ is biclosed and no two elements of $J(s_\mathcal{D})$ are composable, we see that for each break $\{s_1,s_2\}$ of $t$ exactly one of these splits belongs to $J(s_\mathcal{D})$. This implies that $t_{\mathcal{D}(t)} \in \mathcal{S}_T$.

Now, we show that $J(t_{\mathcal{D}(t)}) \le J(s_\mathcal{D})$. Clearly, $\{t\} \sqcup \mathcal{D}(t) \subset J(s_\mathcal{D}).$ Now consider $S(s^\prime, \mathcal{D}(t))$ for some $s^\prime \in \mathcal{D}(t)$. Either $s^\prime \in \mathcal{D}$ or $s^\prime \in S(t, \mathcal{D})$ so it suffices to assume the latter. 

Let $t^\prime \in S(s^\prime, \mathcal{D}(t))$. Now write $s^\prime = t^\prime \circ s_3$, $t = s_1 \circ t^\prime \circ s_3$, and $s = s_4 \circ s_1\circ t^\prime \circ s_3\circ s_2$ for some $s_1, s_2, s_3, s_4 \in \text{Seg}(T)$. Since $s^\prime \in S(t, \mathcal{D})$, we know that $s_4\circ s_1$ and $s_2$, which are splits of $s$, do not belong to $\mathcal{D}$. This means that $s^\prime \circ s_2 \in \mathcal{D}$. Since $t^\prime \in S(s^\prime, \mathcal{D}(t))$, we also know that $s_1$ and $s_3$, which are splits of $t$, do not belong to $\mathcal{D}(t)$. This implies that $t^\prime \in S(s^\prime\circ s_2, \mathcal{D}) \subset J(s_\mathcal{D})$.

Lastly, let $t^\prime \in J(s_\mathcal{D})$ where $t^\prime \subseteq t.$ We can assume that $t^\prime$ is not a split of $t$. Now write $s = t^\prime_1\circ t^\prime \circ t_2^\prime$ for some $t_1^\prime, t_2^\prime \in \text{Seg}(T)$. Since $t^\prime \in J(s_\mathcal{D})$ it is not composable with any element of $J(s_\mathcal{D})$, we know that $t^\prime_1, t^\prime_2 \not \in \mathcal{D}$. This implies that $t^\prime_1\circ t^\prime, t^\prime \circ t^\prime_2 \in \mathcal{D}$ and $t^\prime \in S(t^\prime_1\circ t^\prime, \mathcal{D})\cap S(t^\prime \circ t^\prime_2, \mathcal{D})$. 

Now write $t^\prime_1 = t_1\circ s_1$ and $t^\prime_2 = s_2 \circ t_2$ where $s_1$ and $s_2$ are splits of $t$. As $t^\prime$ is not composable with any element of $J(s_\mathcal{D})$, we know that $s_1, s_2 \not \in J(s_\mathcal{D}).$ Thus $s_1\circ t^\prime, t^\prime \circ s_2 \in \mathcal{D}(t)$. It follows that $t^\prime \in S(s_1\circ t^\prime, \mathcal{D}(t))\cap S(t^\prime \circ s_2, \mathcal{D}(t)) \subset J(t_{\mathcal{D}(t)})$. This completes the proof.
\end{proof}


\begin{lemma}\label{nosplit}
Given a biclosed set $B \in \text{Bic}(T)$ and distinct covering relations $(B_1,B), (B_2,B) \in \text{Cov}(\text{Bic}(T))$, the segment $s_1 = \lambda(B_1,B)$ is not a split of $s_2 = \lambda(B_2,B)$ and $s_1 \neq s_2$.
\end{lemma}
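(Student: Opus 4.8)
The plan is to reduce everything to the defining closure properties of biclosed sets, using the descriptions $B_1 = B\backslash\{s_1\}$ and $B_2 = B\backslash\{s_2\}$ furnished by Theorem~\ref{bic_structure}. The inequality $s_1 \neq s_2$ is immediate: were $s_1 = s_2$, we would have $B_1 = B\backslash\{s_1\} = B\backslash\{s_2\} = B_2$, contradicting that $(B_1,B)$ and $(B_2,B)$ are distinct covering relations. The real content is to rule out that $s_1$ is a split of $s_2$, and the key tool is a contrapositive reading of coclosedness: a subset $C \subseteq \text{Seg}(T)$ is coclosed (that is, $C^c$ is closed) precisely when, for every break $\{a,b\}$ of a segment $a\circ b \in C$, at least one of $a,b$ lies in $C$.

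First I would assume, for contradiction, that $s_1$ is a split of $s_2$ and write $s_2 = s_1 \circ u$, so that $\{s_1,u\}$ is a break of $s_2$ with $u \in \text{Seg}(T)$. Since $s_2 \in B$ and $s_2 \neq s_1$ (as $s_1$ is a proper subsegment of $s_2$), we have $s_2 \in B_1 = B\backslash\{s_1\}$. Now I would apply coclosedness of $B_1$ to the break $\{s_1,u\}$ of the segment $s_2 \in B_1$: one of $s_1, u$ must lie in $B_1$. Because $s_1 \notin B_1$ by construction, this forces $u \in B_1 \subseteq B$.

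Then I would turn to $B_2 = B\backslash\{s_2\}$, which is closed. Both $s_1$ and $u$ are proper subsegments of $s_2$, hence distinct from $s_2$, so both survive the deletion; that is, $s_1, u \in B_2$. Since $s_1$ and $u$ are composable with $s_1 \circ u = s_2$, closedness of $B_2$ yields $s_2 \in B_2$, contradicting $s_2 \notin B_2$. This contradiction establishes that $s_1$ is not a split of $s_2$.

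The argument is short, and the only real subtlety—which I would flag as the main point rather than an obstacle—is the interplay between the two halves of biclosedness: coclosedness of $B_1$ is exactly what pins down $u \in B$, while closedness of $B_2$ is what produces the contradiction. The places requiring care are stating the contrapositive form of coclosedness correctly and verifying that both $s_1$ and $u$ genuinely remain in $B_2$ after deleting $s_2$; once these are in place, the chain of implications closes immediately.
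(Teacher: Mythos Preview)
Your argument is correct and essentially identical to the paper's: both write $s_2 = s_1 \circ u$, use coclosedness of $B_1$ (together with $s_2 \in B_1$, $s_1 \notin B_1$) to place $u$ in $B$, and then derive a contradiction from closedness of $B_2$. The paper is terser about the coclosedness step (it just says ``since $s_1 \notin B_1$, we have $t \in B_1$''), but the logic is the same.
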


\begin{proof}
Since $B_1 \neq B_2$, it is clear that $s_1\neq s_2$.

Next, suppose for the sake of contradiction $s_1 = \lambda(B_1,B)$ is a split of $s_2 = \lambda(B_2,B)$. Then $B_1=B\backslash\{s_1\}$ and $B_2=B\backslash\{s_2\}$. Now let $t \in \text{Seg}(T)$ denote the segment satisfying $s_1 \circ t = s_2$. Observe that since $s_1 \not \in B_1$, we have $t \in B_1$. This implies that $t \in B$ and so $t \in B_2$. However, this means that $s_1, t \in B_2$, but $s_2 = s_1\circ t \not \in B_2$, which contradicts that $B_2$ is closed.
\end{proof}

\begin{lemma}\label{lemma_comp_not_canonical}
Given $s_\mathcal{D}, s^\prime_{\mathcal{D}^\prime} \in \mathcal{S}_T$. Assume that there exists $t^1, \ldots, t^k \in J(s_\mathcal{D})\cup J(s^\prime_{\mathcal{D}^\prime})$ with $k \ge 2$ such that $s = t^1\circ \cdots \circ t^k$. Then $J({s}_\mathcal{D}) \vee J(s^\prime_{\mathcal{D}^\prime})$ is not a canonical join representation.
\end{lemma}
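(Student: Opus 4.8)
The plan is to compute the element $x := J(s_\mathcal{D}) \vee J(s^\prime_{\mathcal{D}^\prime})$ well enough to identify which segments can be deleted from $x$ while keeping it biclosed, and then invoke Lemma~\ref{Lemma_cjr_cmr}, which says the canonical join representation of $x$ is precisely $\bigvee\{J(u_{\mathcal{E}}) : u_{\mathcal{E}} \in \widetilde{\lambda}_\downarrow(x)\}$. Concretely, I will show that the segment $s$ is \emph{not} the first coordinate of any label in $\widetilde{\lambda}_\downarrow(x)$, so that $J(s_\mathcal{D})$ cannot be a joinand of the canonical join representation of $x$. Since $J(s_\mathcal{D})$ is visibly a joinand of $J(s_\mathcal{D}) \vee J(s^\prime_{\mathcal{D}^\prime})$, and the canonical join representation of an element is unique, this forces $J(s_\mathcal{D}) \vee J(s^\prime_{\mathcal{D}^\prime})$ to differ from it and hence to fail to be canonical.

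First I would record, using Lemma~\ref{Join_Description}, that $x = \overline{J(s_\mathcal{D}) \cup J(s^\prime_{\mathcal{D}^\prime})}$; in particular $x$ is closed and contains $J(s_\mathcal{D}) \cup J(s^\prime_{\mathcal{D}^\prime})$, so every factor $t^i$ lies in $x$, and since $x$ is closed and $s = t^1 \circ \cdots \circ t^k$, we also have $s \in x$. The crux is the claim that $x \setminus \{s\}$ is \emph{not} closed, hence not biclosed. Because $k \ge 2$, each $t^i$ is a proper subsegment of $s$, so $t^i \neq s$ and therefore $t^i \in x \setminus \{s\}$ for every $i$. If $x \setminus \{s\}$ were closed, then composing the factors one at a time --- noting that each partial composition $t^1 \circ \cdots \circ t^j$ with $j < k$ is a proper subsegment of $s$, hence a segment distinct from $s$ --- we would successively obtain $t^1 \circ t^2 \in x \setminus \{s\}$, then $t^1 \circ t^2 \circ t^3 \in x \setminus \{s\}$, and finally $s = t^1 \circ \cdots \circ t^k \in x \setminus \{s\}$, a contradiction.

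With this claim established, $x \setminus \{s\}$ is not biclosed, so there is no covering relation $(y,x) \in \Cov(\text{Bic}(T))$ with $\lambda(y,x) = s$; equivalently $s \notin \lambda_\downarrow(x)$, and since $\lambda$ is the first coordinate of $\widetilde{\lambda}$, no label in $\widetilde{\lambda}_\downarrow(x)$ has first coordinate $s$. By Lemma~\ref{Lemma_cjr_cmr} together with the bijectivity of $J : \mathcal{S}_T \to \text{JI}(\text{Bic}(T))$ from Proposition~\ref{Prop_join_irr_des}, the join-irreducible $J(s_\mathcal{D})$, whose associated label $\widetilde{\lambda}(J(s_\mathcal{D})_*, J(s_\mathcal{D})) = s_\mathcal{D}$ has first coordinate $s$, is not among the joinands of the canonical join representation of $x$. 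Therefore $J(s_\mathcal{D}) \vee J(s^\prime_{\mathcal{D}^\prime})$, which does have $J(s_\mathcal{D})$ as a joinand, is not the canonical join representation of $x$.

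The main obstacle, and the only step requiring genuine care, is verifying that $x \setminus \{s\}$ fails to be closed: one must confirm that the intermediate compositions really are segments lying in $x \setminus \{s\}$ (they are proper subsegments of $s$, hence segments distinct from $s$), so that closure would propagate all the way back to $s$. Once that is in place, the remainder is a direct translation through the dictionary furnished by Lemmas~\ref{Join_Description} and~\ref{Lemma_cjr_cmr} and Proposition~\ref{Prop_join_irr_des}, and no case distinction on whether $s = s^\prime$ is needed, since the argument simply removes $J(s_\mathcal{D})$ from the canonical representation.
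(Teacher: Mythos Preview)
Your proof is correct and takes a genuinely different, more direct route than the paper. The paper argues by explicitly constructing a strict refinement: it carefully chooses the factors $t^1,\ldots,t^k$ to be maximal and alternating between $J(s_\mathcal{D})$ and $J(s^\prime_{\mathcal{D}^\prime})$, then shows (via Lemma~\ref{Lemma_join_irr_containment} and a case analysis) that replacing $J(s_\mathcal{D})$ by the smaller join-irreducibles $J(t^1_{\mathcal{D}(t^1)}), J(t^3_{\mathcal{D}(t^3)}),\ldots$ yields the same join, witnessing non-canonicity. Your argument bypasses all of this by observing that, because $s$ factors nontrivially through elements of $x=\overline{J(s_\mathcal{D})\cup J(s^\prime_{\mathcal{D}^\prime})}$, the set $x\setminus\{s\}$ cannot be closed; hence $s\notin\lambda_\downarrow(x)$, and Lemma~\ref{Lemma_cjr_cmr} immediately excludes $J(s_\mathcal{D})$ from the canonical joinands of $x$. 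Your approach is shorter, needs no special handling of the cases $s=s^\prime$ or $s$ a split of $s^\prime$, and avoids invoking Lemma~\ref{Lemma_join_irr_containment}; the paper's approach, on the other hand, is more constructive in that it actually exhibits the refining join representation.
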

\begin{proof}
We can assume that $s \neq s^\prime$, $s^\prime$ is not a split of $s$, and $s$ is not a split of $s^\prime$, otherwise we obtain the desired result from Lemma~\ref{nosplit} and Lemma~\ref{Lemma_cjr_cmr}. 

Assume $s = t^1\circ \cdots \circ t^k$ with $k \ge 2$ for some $t^1, \ldots, t^k \in J(s_\mathcal{D})\cup J(s^\prime_{\mathcal{D}^\prime})$. We assume that segment $t^1$ as long as possible with the property that $t^1$ is not expressible as a composition of at least two elements of $J(s_\mathcal{D})\cup J(s^\prime_{\mathcal{D}^\prime})$. Inductively, by choosing $t^1, \ldots, t^{i}$ in this way, we can then choose $t^{i+1}$ as long as possible with the property that $t^{i+1}$ is not expressible as a composition of at least two elements of $J(s_\mathcal{D})\cup J(s^\prime_{\mathcal{D}^\prime})$. Now since no two elements of $J(s_\mathcal{D})$ are composable and no two elements of $J(s^\prime_{\mathcal{D}^\prime})$ are composable, without loss of generality, we have that $t^1, t^3, \ldots \in J(s_\mathcal{D})$ and $t^2, t^4, \ldots \in J(s^\prime_{\mathcal{D}^\prime})$.


To complete the proof, it is enough to show that $$J(t^1_{\mathcal{D}(t^1)})\vee J(t^3_{\mathcal{D}(t^3)})\vee \cdots \vee J(s^\prime_{\mathcal{D}^\prime}) = J(s_\mathcal{D})\vee J(s^\prime_{\mathcal{D}^\prime})$$ as the former is a refinement of the latter. By Lemma~\ref{Lemma_join_irr_containment}, we see that the set on the left-hand side is contained in $J(s_\mathcal{D})\vee J(s^\prime_{\mathcal{D}^\prime})$. Moreover, if $t \in J(s_\mathcal{D})\vee J(s^\prime_{\mathcal{D}^\prime})$ satisfies $t \subseteq t^i$ for some $i = 1, \ldots, k$, then $t$ belongs to $J(t^1_{\mathcal{D}(t^1)})\vee J(t^3_{\mathcal{D}(t^3)})\vee \cdots \vee J(s^\prime_{\mathcal{D}^\prime}).$ This means that to prove the opposite containment, we must show that $t \in J(t^1_{\mathcal{D}(t^1)})\vee J(t^3_{\mathcal{D}(t^3)})\vee \cdots \vee J(s^\prime_{\mathcal{D}^\prime})$ when one of the following holds:
\begin{itemize}
\item[\textit{1)}] $t = t^1 \circ t^2 \circ \cdots \circ t^{j-1}\circ t_j^\prime$ for some $t_j^\prime \in J(s_\mathcal{D})\cup J(s^\prime_{\mathcal{D}^\prime})$,
\item[\textit{2)}] $t = t_i^\prime \circ t^{i+1} \circ \cdots \circ t^{k-1}\circ t^k$ for some $t_i^\prime \in J(s_\mathcal{D})\cup J(s^\prime_{\mathcal{D}^\prime})$, or 
\item[\textit{3)}] $t = t_i^\prime \circ t^{i+1} \circ \cdots \circ t^{j-1}\circ t_j^\prime$ for some $t_j^\prime, t_i^\prime \in J(s_\mathcal{D})\cup J(s^\prime_{\mathcal{D}^\prime})$.
\end{itemize}
We verify \textit{Case 2)}, and the proof of \textit{Case 1)} and \textit{3)} is similar to that of \textit{Case 2)}.

\textit{Case 2):} We show that $t_i^\prime \in J(t^i_{\mathcal{D}(t^i)}).$ Suppose $t_i^\prime \not \in J(t^i_{\mathcal{D}(t^i)})$. Since $t_i^\prime$ is a split of $t^i$, we may write $t_i^{\prime\prime}\circ t_i^\prime = t^i$ for some $t^{\prime\prime}_i \in \text{Seg}(T)$. As $t_i^\prime$ does not belong to $J(t^i_{\mathcal{D}(t^i)})$, we know that $t_i^{\prime\prime} \in \mathcal{D}(t^i)$. We also know $t_i^\prime \in J(s_\mathcal{D})\cup J(s^\prime_{\mathcal{D}^\prime})$ and so the expression $t^i = t_i^{\prime\prime}\circ t_i^\prime$ contradicts our choice of $t^i$.
\end{proof}


\begin{lemma}\label{Lemma_is_canonical}
Let ${s}_\mathcal{D}, s^\prime_{\mathcal{D}^\prime} \in \mathcal{S}_T$ be labels with the following properties:
\begin{itemize}
\item[1)] segments $s$ and $s^\prime$ are distinct,
\item[2)] neither $s$ nor $s^\prime$ is expressible as a composition of at least two segments in $J(s_\mathcal{D})\cup J(s^\prime_{\mathcal{D}^\prime})$, and
\item[3)] neither $J(s^i_{\mathcal{D}^i}) \le J(s^{j}_{\mathcal{D}^j})$ nor $J(s^{j}_{\mathcal{D}^j}) \le J(s^i_{\mathcal{D}^i})$.
\end{itemize}Then $J({s}_\mathcal{D}) \vee J(s^\prime_{\mathcal{D}^\prime})$ is a canonical join representation.
\end{lemma}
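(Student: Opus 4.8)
The plan is to set $B := J(s_\mathcal{D}) \vee J(s'_{\mathcal{D}'})$ and to show that $\widetilde\lambda_\downarrow(B) = \{s_\mathcal{D}, s'_{\mathcal{D}'}\}$. By Lemma~\ref{Lemma_cjr_cmr} the canonical join representation of $B$ is $\bigvee_{u_\mathcal{E} \in \widetilde\lambda_\downarrow(B)} J(u_\mathcal{E})$, so this equality is precisely the statement that $J(s_\mathcal{D}) \vee J(s'_{\mathcal{D}'})$ is a canonical join representation. Throughout I write $B = \overline{J(s_\mathcal{D}) \cup J(s'_{\mathcal{D}'})}$ using Lemma~\ref{Join_Description}, and I use that, by Lemma~\ref{Lemma_J_biclosed} and the definition of $J$, every element of $J(s_\mathcal{D})$ (resp.\ of $J(s'_{\mathcal{D}'})$) is a subsegment of $s$ (resp.\ of $s'$).

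I first observe that the inclusion $\widetilde\lambda_\downarrow(B) \subseteq \{s_\mathcal{D}, s'_{\mathcal{D}'}\}$ is automatic once the reverse inclusion is known. Indeed, suppose $s_\mathcal{D}, s'_{\mathcal{D}'} \in \widetilde\lambda_\downarrow(B)$. Property (3) guarantees that neither $J(s_\mathcal{D})$ nor $J(s'_{\mathcal{D}'})$ lies below the other, so $\{J(s_\mathcal{D}), J(s'_{\mathcal{D}'})\}$ is an irredundant join representation of $B$. Writing $D := \{J(u_\mathcal{E}) : u_\mathcal{E} \in \widetilde\lambda_\downarrow(B)\}$ for the canonical join representation, the fact that $D$ refines every irredundant join representation gives that each element of $D$ lies below $J(s_\mathcal{D})$ or below $J(s'_{\mathcal{D}'})$. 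But $J(s_\mathcal{D}), J(s'_{\mathcal{D}'}) \in D$ and $D$ is irredundant, so any element of $D$ that is strictly below $J(s_\mathcal{D})$ or strictly below $J(s'_{\mathcal{D}'})$ could be removed from $D$ without changing the join, a contradiction. Hence $D = \{J(s_\mathcal{D}), J(s'_{\mathcal{D}'})\}$, and it remains only to prove $s_\mathcal{D}, s'_{\mathcal{D}'} \in \widetilde\lambda_\downarrow(B)$.

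By the symmetry between $s_\mathcal{D}$ and $s'_{\mathcal{D}'}$, it is enough to show $s_\mathcal{D} \in \widetilde\lambda_\downarrow(B)$, and I claim this reduces to checking that $B \setminus \{s\}$ is biclosed. The label is then forced to be correct: the splits of $s$ belonging to $J(s_\mathcal{D})$ are exactly $\mathcal{D}$ and contain exactly one split from each break of $s$, while biclosedness of $B \setminus \{s\}$ forces $B$ itself to contain exactly one split from each break of $s$ (at most one by closedness of $B \setminus \{s\}$, at least one by coclosedness of $B$); since $\mathcal{D} \subseteq B$, these selections agree and $\widetilde\lambda(B \setminus \{s\}, B) = s_\mathcal{D}$. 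Closedness of $B \setminus \{s\}$ follows from property (2): if $s$ were a nontrivial composition of elements of $B$, expanding those elements into generators of $J(s_\mathcal{D}) \cup J(s'_{\mathcal{D}'})$ would express $s$ as a composition of at least two such generators.

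The main obstacle is coclosedness of $B \setminus \{s\}$, which is equivalent to the assertion that whenever $s$ is a split of a segment $v = s \circ \bar s \in B$, the complementary split $\bar s$ also belongs to $B$. I would argue by contradiction, assuming $\bar s \notin B$, fixing a minimal decomposition $v = c_1 \circ \cdots \circ c_m$ into generators, and tracking the faultline vertex $b$ between $s$ and $\bar s$. If $b$ is an endpoint of some $c_i$, then either $s$ is a composition of at least two generators (contradicting (2)) or $s$ is a single generator, whence $\bar s$ is a composition of generators and lies in $B$, a contradiction. The delicate case is when some generator $c_i$ straddles $b$: since the edges of $v$ past $b$ lie outside $s$, such a $c_i$ must come from $J(s'_{\mathcal{D}'})$, which forces $s$ and $s'$ to overlap near $b$. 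I expect to resolve this by using Lemma~\ref{Lemma_join_irr_containment} to exhibit a common join-irreducible lying below both $J(s_\mathcal{D})$ and $J(s'_{\mathcal{D}'})$, contradicting the incomparability in property (3) (property (1) being used to keep $s$ and $s'$ genuinely distinct throughout). This straddling configuration is where I expect the real work to lie.
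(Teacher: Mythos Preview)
Your approach is genuinely different from the paper's. The paper never tries to show that $B\setminus\{s\}$ is biclosed; instead it verifies the definition of canonical join representation directly: given any irredundant expression $J(s_\mathcal{D})\vee J(s'_{\mathcal{D}'})=\bigvee_i J(t^i_{\mathcal{D}^i})$, it uses property (2) to force $s$ itself, and then every element of $\mathcal{D}$, into a single $J(t^i_{\mathcal{D}^i})$, and concludes $J(s_\mathcal{D})\le J(t^i_{\mathcal{D}^i})$ via Lemma~\ref{Lemma_join_irr_containment}. Your route through $\widetilde\lambda_\downarrow(B)$ is more concrete; the reductions you make (irredundancy from (3), closedness of $B\setminus\{s\}$ from (2), and the identification of the label as $s_\mathcal{D}$) are all correct.

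The gap is in your resolution of the straddling case. Producing a common join-irreducible below both $J(s_\mathcal{D})$ and $J(s'_{\mathcal{D}'})$ does \emph{not} contradict property (3): incomparability allows arbitrary common lower bounds. What actually works is simpler. If $c_i\in J(s'_{\mathcal{D}'})$ straddles $b$, write $c_i=p\circ q$ with $p\subseteq s$ and $q\subseteq\bar s$. Coclosedness of $J(s'_{\mathcal{D}'})$ gives $q\in J(s'_{\mathcal{D}'})$ or $p\in J(s'_{\mathcal{D}'})$. In the first case $\bar s=q\circ c_{i+1}\circ\cdots\circ c_m\in B$ and you are done. In the second, $s=c_1\circ\cdots\circ c_{i-1}\circ p$ is a composition of elements of $J(s_\mathcal{D})\cup J(s'_{\mathcal{D}'})$; if $i\ge 2$ this already contradicts (2). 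If $i=1$ then $s=p\in J(s'_{\mathcal{D}'})$, and now (2) pins down the splits of $s$ inside $J(s'_{\mathcal{D}'})$: for each break $\{d,d'\}$ of $s$ with $d\in\mathcal{D}$, having $d'\in J(s'_{\mathcal{D}'})$ would give $s=d\circ d'$ violating (2), so coclosedness forces $d\in J(s'_{\mathcal{D}'})$; and any split $t\in J(s'_{\mathcal{D}'})$ with $t\notin\mathcal{D}$ would give $s=t\circ t'$ with $t'\in\mathcal{D}$, again violating (2). Hence $\mathcal{D}$ is exactly the set of splits of $s$ in $J(s'_{\mathcal{D}'})$, and Lemma~\ref{Lemma_join_irr_containment} yields $J(s_\mathcal{D})\le J(s'_{\mathcal{D}'})$, contradicting (3). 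So the engine of the straddling case is property (2), with (3) entering only in this last degenerate sub-case.
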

\begin{proof}
By the stated properties satisfied by $s_\mathcal{D}$ and $s^\prime_{\mathcal{D}^\prime},$ we have that there exist segments $t \in J(s_{\mathcal{D}})\backslash J(s^\prime_{\mathcal{D}^\prime})$ and $t^\prime \in J(s^\prime_{\mathcal{D}^\prime})\backslash J(s_{\mathcal{D}})$. This implies that $J(s_\mathcal{D}) < J(s_\mathcal{D})\vee J(s^\prime_{\mathcal{D}^\prime})$ and  $J(s^\prime_{\mathcal{D}^\prime}) < J(s_\mathcal{D})\vee J(s^\prime_{\mathcal{D}^\prime}).$ Therefore, the join representation $J(s_\mathcal{D})\vee J(s^\prime_{\mathcal{D}^\prime})$ is irredundant.


Next, suppose that $J(s_\mathcal{D})\vee J(s^\prime_{\mathcal{D}^\prime}) = \bigvee_{i = 1}^k J(t^i_{\mathcal{D}^i})$ where the latter is irredundant. We will show that $J(s_\mathcal{D}) \le J(t^i_{\mathcal{D}^i})$ for some $i = 1, \ldots, k$, and one uses the same strategy to prove that $J(s^\prime_{\mathcal{D}^\prime}) \le J(t^j_{\mathcal{D}^j})$ for some $j = 1, \ldots, k$.

Since $s \in \bigvee_{i = 1}^k J(t^i_{\mathcal{D}^i})$, there exist $t_{i_j} \in J(t^{i_j}_{\mathcal{D}^{i_j}})$ with $j = 1, \ldots, \ell$ such that $s = t_{i_1}\circ \cdots \circ t_{i_\ell}$. By possibly factoring the segments $t_{i_j}$ further and by the fact that $J(s_\mathcal{D})\vee J(s^\prime_{\mathcal{D}^\prime}) = \bigvee_{i = 1}^k J(t^i_{\mathcal{D}^i})$, we can assume $t_{i_j} \in J(s_\mathcal{D})\cup J(s^\prime_{\mathcal{D}^\prime})$ for all $j = 1, \ldots, \ell$. As $s$ is not expressible as a composition of at least two segments from $J(s_\mathcal{D})\cup J(s^\prime_{\mathcal{D}^\prime})$, this implies that $\ell = 1$ and so $s \in J(t^i_{\mathcal{D}^i})$ for some $i = 1, \ldots, k$.

Now let $t \in \mathcal{D}$. We can write $s = t \circ t^\prime$ for some $t^\prime \in \text{Seg}(T)$. Suppose $t \not \in J(t^i_{\mathcal{D}^i})$.  Since $J(t^i_{\mathcal{D}^i})$ is biclosed and $s \in J(t^i_{\mathcal{D}^i})$, we know $t^\prime \in J(t^i_{\mathcal{D}^i}).$ However, by the fact that $J(s_\mathcal{D})\vee J(s^\prime_{\mathcal{D}^\prime}) = \bigvee_{i = 1}^k J(t^i_{\mathcal{D}^i})$, the equation $s = t \circ t^\prime$ contradicts that $s$ is not expressible as a composition of at least two segments from $J(s_\mathcal{D})\cup J(s^\prime_{\mathcal{D}^\prime})$. Thus $t \in J(t^i_{\mathcal{D}^i})$ so $\mathcal{D} = \{t \in J(t^i_{\mathcal{D}^i}): \ t \text{ is a split of } s\}.$

We now conclude from Lemma~\ref{Lemma_join_irr_containment} that $J(s_\mathcal{D}) \le J(t^i_{\mathcal{D}^i})$ so $J({s}_\mathcal{D}) \vee J(s^\prime_{\mathcal{D}^\prime})$ is a canonical join representation.\end{proof}

\section{Shard intersection order}\label{shard_section}

In this section, we describe elements of $\Psi(\text{Bic}(T))$ using our CU-labeling of $\text{Bic}(T)$. After that, we use this description to show that $\Psi(\text{Bic}(T))$ is a lattice.

\subsection{Elements of the shard intersection order} Let $B \in \text{Bic}(T)$ be a biclosed set that covers exactly the following biclosed sets: $B_1, B_2,\ldots, B_k$. Let ${s_i}_{\mathcal{D}_i} = \widetilde{\lambda}(B_i,B)$ for $i=1, \ldots, k$ and $\lambda_\downarrow(B) =\{s_1, s_2,\ldots,s_k\}$ where $s_i = \lambda(B_i,B)$ for $i = 1, \ldots, k$. Now fix a segment $s \in\overline{\lambda_{\downarrow}(B)}$ expressed as $s = s_{i_1}\circ s_{i_2}\circ \ldots\circ s_{i_\ell}$ with each $s_{i_j}\in \lambda_{\downarrow}(B)$. If $t \in \text{Seg}(T)$ is a split of $s$ that can be expressed as either $t = s_{i_1}\circ \cdots \circ s_{i_j}$ for some $j = 1, \ldots, \ell-1$ or $t = s_{i_j}\circ \cdots \circ s_{i_\ell}$ for some $j = 2,\ldots, \ell$, we say that $t$ is a \textbf{faultline split} of $s$. Otherwise, we say that $t$ is a \textbf{non-faultline split} of $t$. Additionally, we refer to $\{s_{i_1}\circ\cdots \circ s_{i_j}, s_{i_{j+1}}\circ \cdots \circ s_{i_\ell}\}$ as a \textbf{faultine break}.


\begin{theorem}\label{psib}
	Given a biclosed set $B \in \text{Bic}(T)$, we have that $\psi(B)$ is the set of all labels of the form $$(s_{i_1}\circ s_{i_2}\circ \cdots\circ s_{i_\ell})_{\mathcal{D}}$$ with $s_{i_j} \in \lambda_{\downarrow}(B)$ where $s_{i_1}\circ s_{i_2}\circ \cdots\circ s_{i_\ell}$ is any element of $\overline{\lambda_{\downarrow}(B)}$ and where $\mathcal{D}$ is any set of segments that satisfies the following properties: 
\begin{itemize}
\item[(i)] $|\mathcal{D}| = |\{\text{breaks of } s_{i_1}\circ s_{i_2}\circ \cdots\circ s_{i_\ell}\}|,$
\item[(ii)] each segment $t \in \mathcal{D}$ is a split of $s_{i_1}\circ \cdots \circ s_{i_\ell}$,
\item[(iii)] no two distinct splits $t_1, t_2 \in \mathcal{D}$ appear in the same break of $s_{i_1}\circ \cdots \circ s_{i_\ell}$, and
\item[(iv)] {whenever $t \in \mathcal{D}$ is a non-faultline split of $s_{i_1}\circ \cdots \circ s_{i_\ell}$, we have that $t = s_{i_1} \circ \cdots \circ s_{i_{j-1}}\circ t_j$ for some $j = 1,\ldots, \ell$ and some $t_j \in \mathcal{D}_{i_j}$ or $t = t_j \circ s_{i_{j+1}}\circ \cdots \circ s_{i_{\ell}}$ for some $j = 1,\ldots, \ell$ and some $t_j \in \mathcal{D}_{i_j}$. In the former case if $j = 1$, we mean $t = t_1$, and in the latter case, if $j = \ell$, we mean $t = t_\ell$.}
\end{itemize}\end{theorem}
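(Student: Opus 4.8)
The plan is to unwind the definition of $\psi(B)$ directly and read off the labels carried by covering relations inside the facial interval $[\bigwedge_i B_i, B]$. First I would pin down this interval. Writing $B_i = B\setminus\{s_i\}$ and $\Sigma := \overline{\lambda_\downarrow(B)} = \overline{\{s_1,\dots,s_k\}}$, Lemma~\ref{meet_of_B_is} gives $\bigwedge_{i} B_i = B\setminus\Sigma$, so every element of the interval has the form $w = (B\setminus\Sigma)\sqcup X$ with $X\subseteq\Sigma$, and $B=(B\setminus\Sigma)\sqcup\Sigma$. For a covering relation $(w,\,w\sqcup\{s\})$ inside the interval the added segment satisfies $s\in B\setminus(B\setminus\Sigma)=\Sigma$, so $s$ is a composition $s_{i_1}\circ\cdots\circ s_{i_\ell}$ of elements of $\lambda_\downarrow(B)$; this is the first coordinate in the statement. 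That every such $s$ actually occurs, and with every admissible $\mathcal{D}$, will come out of the converse direction, so I need only record the forward containment here.

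Next I would dispatch properties (i)--(iii), which merely assert that $s_\mathcal{D}$ is a well-formed element of $\mathcal{S}_T$; since $\widetilde\lambda$ takes values in $\mathcal{S}_T$ these hold automatically. Concretely, for each break $\{L,R\}$ of $s$, closedness of $w$ forbids $L,R\in w$ simultaneously (else $s=L\circ R\in w$), and coclosedness of $z=w\sqcup\{s\}$ forbids $L,R\notin w$ simultaneously (else $L,R\in z^c$ would force $s\in z^c$), so exactly one of $L,R$ lies in $w$, and $\mathcal{D}$ is precisely this choice of one split per break. Classifying each break by the location of its two splits among $B\setminus\Sigma$, $\Sigma$, and $B^c$, and using $B\setminus\Sigma\subseteq w\subseteq B$, then yields a clean dichotomy: the choice at a break is \emph{forced} unless both of its splits lie in $\Sigma$, in which case it is \emph{free}. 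These free breaks are the faultline breaks and the forced ones the non-faultline breaks.

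The heart of the proof is property (iv), and I expect this to be the main obstacle. For the forward direction I would extract from the cover a decomposition $s=s_{i_1}\circ\cdots\circ s_{i_\ell}$ into $\lambda_\downarrow(B)$-pieces, chosen compatibly with $w$, and show that at each non-faultline break --- lying interior to some $s_{i_j}$ with $s_{i_j}=t_j\circ r_j$ --- the split forced into $w$ is $s_{i_1}\circ\cdots\circ s_{i_{j-1}}\circ t_j$ exactly when $t_j$ is the unique split of $s_{i_j}$ lying in $B$, i.e. $t_j\in\mathcal{D}_{i_j}$ (recall $\mathcal{D}_{i_j}$ is the set of splits of $s_{i_j}$ in $B$, read off from the cover $(B_i,B)$). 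The genuinely subtle point is that the faultline/non-faultline distinction is decomposition-dependent --- a vertex may be a joint for one factorization of $s$ and interior for another --- so the statement must be read as asserting the existence of a decomposition for which (iv) holds, and the decomposition must be selected with care. Pinning down the forced side requires combining closedness and coclosedness of both $w$ and $B$ with the constraint $B\setminus\Sigma\subseteq w$: the composition $s_{i_j}=t_j\circ r_j\in B$ forces exactly one of $t_j,r_j$ into $B$, and closedness of $B$ propagates that side to the corresponding split of $s$.

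For the converse I would take a label $s_\mathcal{D}$ satisfying (i)--(iv) for some decomposition and realize it as a cover in $[B\setminus\Sigma,\,B]$. The cleanest route is through the join-irreducible $J(s_\mathcal{D})$: property (iv) guarantees that the generators of $J(s_\mathcal{D})$ arising from non-faultline breaks lie in the sets $\mathcal{D}_{i_j}\subseteq B$, whence $J(s_\mathcal{D})\le B$ via the description of $J$ together with Lemma~\ref{Lemma_join_irr_containment}. I would then set $w := J(s_\mathcal{D})_*\vee(B\setminus\Sigma)$ and $z := J(s_\mathcal{D})\vee(B\setminus\Sigma)$; both lie in the interval, and by Theorem~\ref{bic_structure}(3) and the definition of $\widetilde\lambda$ the edge $(w,z)$ carries the label $s_\mathcal{D}$, provided $z$ covers $w$. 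The remaining --- and delicate --- point is to verify $s\notin w$, equivalently that $s$ is not a composition of elements of $J(s_\mathcal{D})_*\cup(B\setminus\Sigma)$; this is exactly where the faultline form of (iv) is used, since any such premature factorization would split $s$ across a break at which $\mathcal{D}$ disagrees with the forced choice, contradicting (iv). Granting this, $(w,z)$ is a cover in the facial interval with label $s_\mathcal{D}$, so $s_\mathcal{D}\in\psi(B)$, completing the characterization.
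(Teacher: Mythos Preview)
Your overall structure is right, but there are two concrete issues. First, your worry that ``a vertex may be a joint for one factorization of $s$ and interior for another'' is unfounded: Lemma~\ref{nosplit} forces the decomposition $s=s_{i_1}\circ\cdots\circ s_{i_\ell}$ into elements of $\lambda_\downarrow(B)$ to be \emph{unique} (this is exactly the argument of Lemma~\ref{Lemma_ker_coker}, which uses only Lemma~\ref{nosplit} and not Theorem~\ref{psib}). So there is no decomposition-dependence to manage, and the faultline/non-faultline distinction is intrinsic to $s$ and $B$. The paper exploits this and proves (iv) cleanly by induction on $\ell$, reducing the non-faultline split $s_{i_1}\circ\cdots\circ s_{i_{j-1}}\circ t_j$ to the $\ell=1$ statement $t_j\in\mathcal{D}_{i_j}$.

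Second, your converse route through $J(s_\mathcal{D})$ is genuinely different from the paper's, and the step you justify ``via the description of $J$ together with Lemma~\ref{Lemma_join_irr_containment}'' is a real gap. Lemma~\ref{Lemma_join_irr_containment} compares two join-irreducibles; it does not give $J(s_\mathcal{D})\le B$. That inclusion is true, but proving it for the elements of $\bigcup_{t\in\mathcal{D}}S(t)$ requires a case analysis using coclosedness of $B$ and the structure of the $\mathcal{D}_{i_j}$ (e.g.\ for $u\in S(t)$ one shows the complementary split of $u$ in the relevant $s_{i_j}$ is forced out of $B$). You also need to check that the join $w=J(s_\mathcal{D})_*\vee(B\setminus\Sigma)$ contains no splits of $s$ beyond $\mathcal{D}$, since the closure could in principle create new ones. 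The paper sidesteps all of this: for the converse it works entirely inside $\Sigma$ via Lemma~\ref{Lemma_facial_int_atoms}, building for each faultline split $f\in\mathcal{F}$ an explicit ``staircase'' set $D(f)\subset\overline{\{s_{i_1},\ldots,s_{i_\ell}\}}$ and showing $\overline{\bigcup_{f}D(f)}$ is a biclosed subset of $\Sigma$ whose only splits of $s$ are $\mathcal{F}$. This is more elementary and avoids the $J(s_\mathcal{D})\le B$ verification entirely. (The paper does use your $J$-based construction later, in Lemmas~\ref{nonempty_s^i} and~\ref{meet_in_Psi}, but only once $s_\mathcal{D}\in\psi(B)$ is already known, which makes $J(s_\mathcal{D})\le B$ immediate from Proposition~\ref{Prop_join_irr_des}.)
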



\begin{figure}
\centering
\includegraphics[scale=1.2]{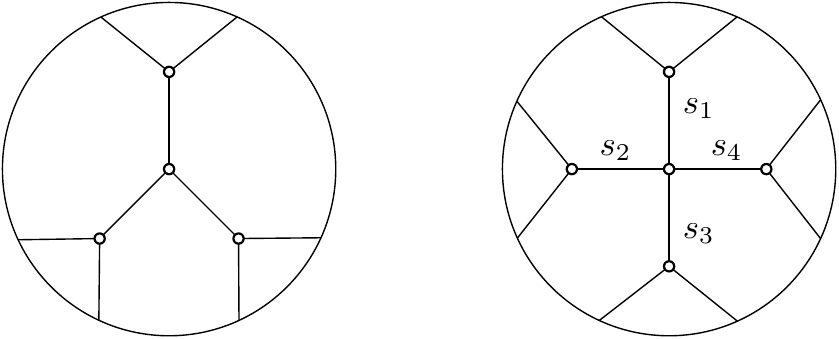}
\caption{The trees from Example~\ref{shard_calc_ex}.}
\label{cycle_trees}
\end{figure}

\begin{example}\label{shard_calc_ex}
Let $T$ be the tree on the left in Figure~\ref{cycle_trees}, and let $T^\prime$ be the tree on the right. In Figure~\ref{shard_ex_a3}, we show the shard intersection order of $\text{Bic}(T)$. The atoms in this lattice are the 9 labels in $\mathcal{S}_T$. The presence of a purple segment $s$ indicates that both labels $s_\mathcal{D}$ and $s_{\mathcal{D}^\prime}$ belong to the corresponding set $\psi(B)$. This indicates that given $B \in \text{Bic}(T)$, one has $$|\psi(B)| = |\{\text{dark red segments in $\psi(B)$}\}| + 2|\{\text{purple segments in $\psi(B)$}\}|.$$

The shard intersection order of biclosed sets fails to be graded in general, although $\Psi(\text{Bic}(T))$ is graded of rank 3. We show that $\Psi(\text{Bic}(T^\prime))$ is not graded by producing two maximal chains of $\Psi(\text{Bic}(T^\prime))$ with different lengths. The first maximal chain is as follows:
$$\begin{array}{l}
\emptyset\\ \{(s_1\circ s_2)_{\{s_1\}}\}\\ \{(s_1\circ s_2)_{\{s_1\}}, (s_1\circ s_4)_{\{s_1\}}\}\\ \{(s_1\circ s_2)_{\{s_1\}} (s_1\circ s_4)_{\{s_1\}}, (s_3\circ s_4)_{\{s_3\}}\}\\ \{(s_1\circ s_2)_{\{s_1\}}, (s_1\circ s_4)_{\{s_1\}}, (s_3\circ s_4)_{\{s_3\}}, (s_2\circ s_3)_{\{s_3\}}\}\\ \mathcal{S}_{T^\prime}.
\end{array}$$

The second maximal chain is as follows:

$$\begin{array}{l}
\emptyset\\ \{(s_1\circ s_2)_{\{s_1\}}\}\\ \{(s_1\circ s_2)_{\{s_1\}}, {s_4}_{\emptyset}\}\\ \{(s_1\circ s_2)_{\{s_1\}}, {s_4}_{\emptyset}, {s_2}_{\emptyset}, {s_1}_{\emptyset}, (s_1\circ s_2)_{\{s_2\}}, (s_1\circ s_4)_{\{s_1\}}, (s_1\circ s_4)_{\{s_4\}}\}\\ \mathcal{S}_{T^\prime}. 
\end{array}$$

That $\Psi(\text{Bic}(T))$ generally fails to be graded was already observed in \cite[Remark 6.12]{cliftondillery}. The trees $T$ and $T^\prime$ in this example belong to the one parameter family of trees that are completely determined by the choice of degree on their central vertex. In \cite[Conjecture 6.13]{cliftondillery}, the first and second authors conjectured that for this one parameter family of trees $\Psi(\text{Bic}(T))$ is graded if and only if $n$ is odd.

\end{example}

\begin{figure}
\centering
\rotatebox{270}{\includegraphics[scale=1.45]{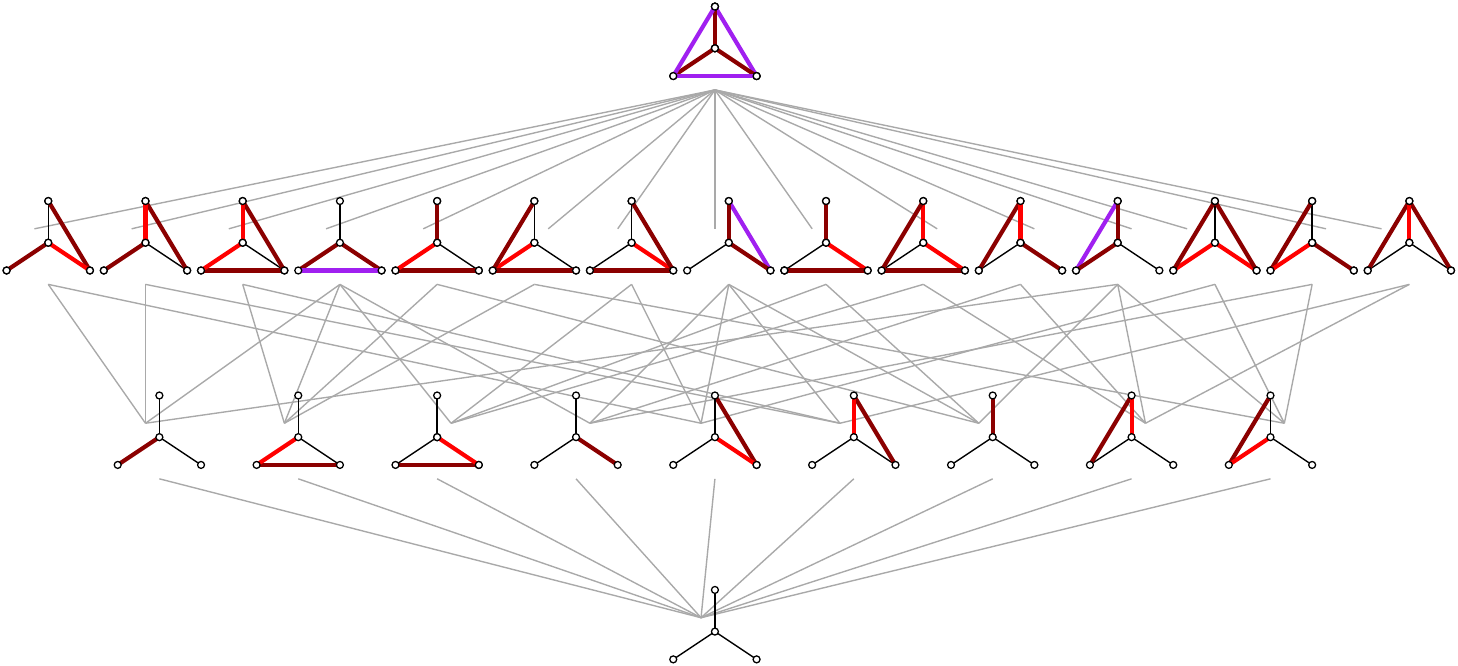}}
\caption{The shard intersection order $\Psi(\text{Bic}(T))$ when $T$ is the tree on the left in Figure~\ref{cycle_trees}.}
\label{shard_ex_a3}
\end{figure}

We use the following lemma in the proof of Theorem~\ref{psib}.


\begin{lemma}\label{Lemma_facial_int_atoms}
Given a biclosed $B \in \text{Bic}(T)$ where $B_1, \ldots, B_k \in \text{Bic}(T)$ are all of the biclosed sets covered by $B$ and where $s_j = \lambda(B_j, B)$ for $j = 1, \ldots, k$, we have that $B^\prime \in \left[\bigwedge_{i = 1}^k B_i, B\right]$ if and only if $B^\prime = (\bigwedge_{i = 1}^k B_i) \sqcup S^\prime$ where $S^\prime$ and $\overline{\{s_1,\ldots, s_k\}}\backslash S^\prime$ are closed subsets of $\overline{\{s_1,\ldots, s_k\}}$. 
\end{lemma}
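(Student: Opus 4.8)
The plan is to reduce the statement to a question about the single closed set $C := \overline{\{s_1,\ldots,s_k\}}$. By Lemma~\ref{meet_of_B_is}, $\bigwedge_{i=1}^k B_i = B\setminus C$; write $M := B\setminus C$, so $M$ is biclosed, $C\subseteq B$, and $B = M\sqcup C$. Any $B'$ in the interval $\left[\bigwedge_i B_i, B\right] = [M,B]$ satisfies $M\subseteq B'\subseteq M\sqcup C$, hence is of the form $B' = M\sqcup S'$ for the unique set $S' := B'\setminus M = B'\cap C\subseteq C$. Thus the lemma is equivalent to the claim that, for $S'\subseteq C$, the set $M\sqcup S'$ is biclosed if and only if both $S'$ and $C\setminus S'$ are closed. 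I will repeatedly use the decomposition $\text{Seg}(T) = B^c\sqcup M\sqcup C$, which gives $(M\sqcup S')^c = (C\setminus S')\sqcup B^c$.

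The forward direction is immediate. If $B' = M\sqcup S'$ is biclosed, then closedness of $S'$ follows from that of $B'$: a composition of two elements of $S'\subseteq C$ lands in $C$ (as $C$ is closed) and in $B'$ (as $B'$ is closed), hence in $C\cap B' = S'$ since $M\cap C = \emptyset$. Dually, closedness of $C\setminus S'$ follows from closedness of $(B')^c = (C\setminus S')\sqcup B^c$ together with $C\cap B^c = \emptyset$.

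For the converse I assume $S'$ and $C\setminus S'$ are closed and must show $B' = M\sqcup S'$ is biclosed. Because $\text{Bic}(T)$ is self-dual under $B\mapsto B^c$ (Theorem~\ref{bic_structure}), and this involution carries the facial interval $[M,B]$ to a facial interval with data of the same shape, it suffices to prove that $B'$ is closed; coclosedness then follows by applying the closed case to the dual data. Since compositions internal to $M$ (resp.\ to $S'$) are absorbed by closedness of $M$ (resp.\ of $S'$), and a composition of two elements of $B'$ always lands in $B = M\sqcup C$, the only point requiring work is a ``mixed'' composition $r = a\circ b$ with $a\in S'\subseteq C$, $b\in M$, and $r\in C$ (if instead $r\in M$ we are done). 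The goal is to show this situation cannot occur.

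Ruling out the mixed composition is the heart of the argument and the step I expect to be the main obstacle. The key structural input is that \emph{no proper split of a cover-label $s_j$ lies in $C$}: if such a split $c$ were in $C$, then writing $c$ as a composition of cover-labels and taking the factor sharing the relevant endpoint of $s_j$ would exhibit a cover-label that is a split of the distinct cover-label $s_j$, contradicting Lemma~\ref{nosplit}. From this I would deduce a cancellation property by induction on the length of the path: \emph{if $r\in C$ and $r = a\circ b$ is a break with $a\in C$, then $b\in C$.} Indeed, decomposing $r$ into cover-labels $x_1\circ\cdots\circ x_n$, the break $a\mid b$ is either at a junction between consecutive $x_i$ (whence $b = x_{i+1}\circ\cdots\circ x_n\in C$), or interior to some $x_q$, in which case $a = (x_1\circ\cdots\circ x_{q-1})\circ c$ with $c$ a proper split of the cover-label $x_q$; applying the inductive hypothesis to the shorter segment $a\in C$ forces $c\in C$, contradicting the key fact (the case $q=1$ contradicting it directly). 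With cancellation in hand, a mixed composition $r = a\circ b$ with $a\in S'\subseteq C$ and $r\in C$ would force $b\in C$, contradicting $b\in M = B\setminus C$. Hence no mixed composition exists, $B'$ is closed, and by the self-duality noted above $B'$ is biclosed, completing the converse.
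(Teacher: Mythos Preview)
Your argument is correct and follows a genuinely different route from the paper's. The paper does not prove the biconditional directly: it invokes an external result, \cite[Proposition~4.2]{garver2016oriented}, which reduces the whole statement to checking that each single-atom set $(\bigwedge_i B_i)\sqcup\{s_j\}$ is biclosed, and then verifies just that. You instead prove both directions from scratch, the forward direction being trivial and the converse resting on your cancellation property for $C=\overline{\{s_1,\ldots,s_k\}}$: if $r\in C$ and $r=a\circ b$ with $a\in C$, then $b\in C$. This is the right structural fact, and your inductive proof of it (via the key observation that no proper split of a cover-label lies in $C$, which is exactly Lemma~\ref{nosplit}) is clean. What your approach buys is self-containment: you never leave the paper. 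What the paper's approach buys is brevity, at the cost of the external citation.

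One point deserves tightening. Your appeal to self-duality is imprecise as stated: the image $[B^c,M^c]$ of the facial interval under complementation need not itself be a facial interval in the sense of the lemma, since the lower covers of $M^c$ are governed by the \emph{upper} covers of $M$, which you have not identified. Fortunately you do not need self-duality at all. Your cancellation property is a statement purely about $C$, and it handles coclosedness just as it handles closedness: $(M\sqcup S')^c = B^c\sqcup(C\setminus S')$, both $B^c$ and $C\setminus S'$ are closed, a mixed composition $a\circ b$ with $a\in C\setminus S'$ and $b\in B^c$ lies in $M^c=B^c\sqcup C$ (since $M$ is biclosed), and if it landed in $C$ then cancellation would force $b\in C\subseteq B$, a contradiction. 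So simply replace the self-duality sentence with this direct argument.
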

\begin{proof}
By \cite[Proposition 4.2]{garver2016oriented}, it is enough to show that $(\bigwedge_{i = 1}^k B_i)\sqcup \{s_j\}$ is a biclosed set for any $j = 1, \ldots, k.$ Note that $B = (\bigwedge_{i = 1}^k B_i) \sqcup \overline{\{s_1, \ldots, s_k\}}$ by Lemma~\ref{meet_of_B_is}.

To show that $(\bigwedge_{i = 1}^k B_i)\sqcup \{s_j\}$ is closed, it is enough to show that if $s \in \bigwedge_{i = 1}^k B_i$ and segments $s$ and $s_j$ are composable, then $s \circ s_j \in (\bigwedge_{i = 1}^k B_i) \sqcup \{s_j\}$. Since $B$ is closed and $B = (\bigwedge_{i = 1}^k B_i) \sqcup \overline{\{s_1, \ldots, s_k\}}$, we have that $s \circ s_j \in B$. Therefore, we can assume that $s \circ s_j \not \in \bigwedge_{i = 1}^k B_i$. Write $s\circ s_j = s_{i_1}\circ \cdots \circ s_{i_\ell}$ where $s_{i_1}, \ldots, s_{i_\ell} \in \{s_1, \ldots, s_k\}$, and we can assume that $\ell \ge 2$. By Lemma~\ref{nosplit}, we see that $s_j = s_{i_1}$ or $s_j = s_{i_\ell}$. We conclude that $s \in \overline{\{s_1, \ldots, s_k\}}$, a contradiction.


Next, we show that $((\bigwedge_{i = 1}^k B_i)\sqcup \{s_j\})^c$ is closed. Note that $((\bigwedge_{i = 1}^k B_i)\sqcup \{s_j\})^c = (\bigvee_{i = 1}^k B_i^c)\backslash\{s_j\}$ by Lemma~\ref{Lemma_meet_in_bic}. It is clear that if $s, t \in (\bigvee_{i = 1}^k B_i^c)\backslash\{s_j\}$ are composable, then $s\circ t \in (\bigvee_{i = 1}^k B_i^c)$. Thus, it remains to show that $s\circ t \neq s_j$. 

Suppose $s\circ t = s_j$. If $s, t \in  (\bigvee_{i = 1}^k B_i^c)\backslash\overline{\{s_1,\ldots, s_k\}}$, then since $(\bigvee_{i = 1}^k B_i^c)\backslash\overline{\{s_1,\ldots, s_k\}}$ is a closed set, we know that $s\circ t \in (\bigvee_{i = 1}^k B_i^c)\backslash\overline{\{s_1,\ldots, s_k\}}$. However, this contradicts that $s\circ t = s_j$. We conclude that $s \in \overline{\{s_1,\ldots, s_k\}}$ or $t \in \overline{\{s_1,\ldots, s_k\}}$. We assume $s \in \overline{\{s_1,\ldots, s_k\}}$, without loss of generality. Write $s = s_{i_1}\circ \cdots \circ s_{i_\ell}$ for some $s_{i_1}, \ldots, s_{i_\ell} \in \{s_1,\ldots, s_k\}$. Since $s\circ t = s_j$, we have $s_{i_1}\circ \cdots \circ s_{i_\ell} \circ t = s_j$. This implies that $s_{i_1}$ is a split of $s_j$, and this contradicts Lemma~\ref{nosplit}.
\end{proof}


\begin{proof}[Proof of Theorem~\ref{psib}]

We know that the set $\psi(B)$ consists of labels of the form $(s_{i_1}\circ \cdots \circ s_{i_\ell})_{\mathcal{D}}$ where $\mathcal{D}$ is some subset of $\text{Seg}(T)$ and $s_{i_1},\ldots, s_{i_\ell} \in \lambda_\downarrow(B)$. Since any biclosed set $B^\prime \in \left[\bigwedge_{i=1}^k B_i, B\right]$ is of the form $\bigwedge_{i=1}^k B_i \sqcup S^\prime$ where $S^\prime$ is some biclosed subset of $\overline{\{s_1,\ldots, s_k\}}$, the segment $s_{i_1}\circ \cdots \circ s_{i_\ell}$ may be any element of $\overline{\{s_1,\ldots, s_k\}}$. Moreover, we know that any two sets $\mathcal{D}$ and $\mathcal{D}^\prime$ appearing on labels $(s_{i_1}\circ \cdots \circ s_{i_\ell})_{\mathcal{D}}$ and $(s_{i_1}\circ \cdots \circ s_{i_\ell})_{\mathcal{D}^\prime}$ contain the same non-faultline splits. 

It remains to show that given a segment $s_{i_1}\circ \cdots \circ s_{i_\ell}$, the subsets $\mathcal{D}$ appearing on labels $(s_{i_1}\circ \cdots \circ s_{i_\ell})_{\mathcal{D}}$ are exactly those sets of splits of $s_{i_1}\circ \cdots \circ s_{i_\ell}$ with the properties appearing in the statement of the theorem. 

First, given a label $(s_{i_1}\circ \cdots \circ s_{i_\ell})_{\mathcal{D}} \in \psi(B)$, we show that $\mathcal{D}$ has the desired properties. It is clear from the definition of $\widetilde{\lambda}$ that $\mathcal{D}$ satisfies properties (i), (ii), and (iii). Thus, we proceed by induction on $\ell$ to prove property (iv). Assume that $\ell = 1$ so we consider labels of the form $(s_{i_1})_\mathcal{D}$. Observe that there are no nontrivial faultline splits of $s_{i_1}$ so $\mathcal{D}$ contains no faultline splits. If $({s_{i_1}})_{\mathcal{D}_{i_1}}$ is another label appearing on a covering relation in $\left[\bigwedge_{i=1}^k B_i, B\right]$, we have $|\mathcal{D}| = |\mathcal{D}_{i_1}|$. Thus $\mathcal{D} = \mathcal{D}_{i_1}$.


Next, suppose that for any label $(s_{j_1}\circ \cdots \circ s_{j_{\ell^\prime}})_{\mathcal{D}^\prime} \in \psi(B)$ where $s_{j_1},  \ldots, s_{j_{\ell^\prime}} \in \{s_{i_1}, \ldots, s_{i_\ell}\}$ and $\ell^\prime < \ell$ the subset $\mathcal{D}^\prime$ satisfies property (iv). Now consider $(s_{i_1}\circ \cdots \circ s_{i_\ell})_\mathcal{D} \in \psi(B)$ and let $t \in \mathcal{D}$ be a non-faultline split of $s_{i_1}\circ \cdots \circ s_{i_\ell}$. Without loss of generality, $t = s_{i_1}\circ \cdots \circ s_{i_{j-1}}\circ t_j$. Since $t_j$ is a non-faultline split of $s_{i_j}$, by induction, $t_j \in \mathcal{D}_{i_j}$. 


To prove the converse, it is enough to show that any family $\mathcal{F} \subset \overline{\{s_{i_{1}}, \ldots, s_{i_\ell}\}}$ of $\ell-1$ faultline splits of $s_{i_1}\circ \cdots \circ s_{i_\ell}$ satisfying the properties in the statement of the theorem belongs to a set $\mathcal{D}$ that appears on some label $(s_{i_1}\circ \cdots \circ s_{i_\ell})_\mathcal{D}$. Given $f = s_{i_1}\circ \cdots \circ s_{i_j} \in \mathcal{F}$, define $$D(f) := \{f =s_{i_1}\circ \cdots \circ s_{i_j}, s_{i_2}\circ \cdots \circ s_{i_j}, \ldots, s_{i_{j-1}} \circ s_{i_j}, s_{i_j} \}.$$ The set $D(f)$ is defined analogously when $f = s_{i_j}\circ \cdots \circ s_{i_\ell}$. By construction, $D(f)$ and $\overline{\{s_{i_{1}}, \ldots, s_{i_\ell}\}}\backslash D(f)$ are closed subsets of $\overline{\{s_{i_{1}}, \ldots, s_{i_\ell}\}}$. 

Now observe that $\overline{\bigcup_{f \in \mathcal{F}} D(f)}$ and $\overline{\{s_{i_{1}}, \ldots, s_{i_\ell}\}}\backslash \overline{\bigcup_{f \in \mathcal{F}} D(f)}$ are closed subsets of $\overline{\{s_{i_{1}}, \ldots, s_{i_\ell}\}}$. This implies that $\overline{\bigcup_{f \in \mathcal{F}} D(f)} \sqcup \bigwedge_{i=1}^k B_i \in \left[\bigwedge_{i=1}^k B_i, B\right]$ is a biclosed set. Moreover, the only (necessarily) faultline splits of $s_{i_1}\circ \cdots \circ s_{i_\ell}$ in $\overline{\bigcup_{f \in \mathcal{F}} D(f)}$ are the elements of $\mathcal{F}$. Thus, setting $B^{1} := \overline{\bigcup_{f \in \mathcal{F}} D(f)} \sqcup \bigwedge_{i=1}^k B_i$ and $B^{2} := B^{1} \sqcup \{s_{i_1}\circ \cdots \circ s_{i_\ell}\}$, we obtain that $\widetilde{\lambda}(B^{1}, B^{2}) = (s_{i_1}\circ \cdots \circ s_{i_\ell})_{\mathcal{F} \sqcup \mathcal{N}}$ where $\mathcal{N}$ is the set of non-faultline splits of $s_{i_1}\circ \cdots \circ s_{i_\ell}$ that appears in every label of the form $(s_{i_1}\circ \cdots \circ s_{i_\ell})_\mathcal{D}$.\end{proof}


As a consequence of Theorem~\ref{psib}, we now show that the segment $s$ appearing in a label $s_\mathcal{D} \in \psi(B)$ may be expressed in a unique way as a composition of segments in $\{s_1, \ldots, s_k\}$. 

\begin{lemma}\label{Lemma_ker_coker}
Given $B \in \text{Bic}(T)$ and any $s_{\mathcal{D}} \in \psi(B)$, there is a unique way to express $s$ as a composition of some subset of the segments $s_i := \lambda(B_i, B)$ with $i = 1, \ldots, k$ where $B_1, \ldots, B_k$ are all of the biclosed sets covered by $B$. In particular, for any $s_{\mathcal{D}}, t_{\mathcal{D}^\prime} \in \psi(B)$ where $s = t\circ t^\prime$, we have that $t^\prime_{\mathcal{D}^{\prime\prime}} \in \psi(B)$ for some $\mathcal{D}^{\prime\prime} \subset \text{Seg}(T)$.
\end{lemma}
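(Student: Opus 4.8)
The plan is to prove the uniqueness statement first and then deduce the ``in particular'' clause from it. Throughout, I fix an orientation of each segment, writing a composition $s_{i_1}\circ\cdots\circ s_{i_\ell}$ so that $s_{i_1}$ contains the left endpoint of $s$ and $s_{i_\ell}$ contains the right endpoint. The only structural input needed is Lemma~\ref{nosplit}: for the fixed biclosed set $B$, no element of $\lambda_\downarrow(B)=\{s_1,\dots,s_k\}$ is a split of another, and these segments are pairwise distinct. Since $s_{\mathcal{D}}\in\psi(B)$, Theorem~\ref{psib} guarantees that $s\in\overline{\lambda_\downarrow(B)}$, so at least one expression $s=s_{i_1}\circ\cdots\circ s_{i_\ell}$ with each $s_{i_j}\in\lambda_\downarrow(B)$ exists; it remains to see it is the only one.

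For uniqueness I would argue by a ``peeling'' induction on the number of edges of $s$. Suppose $s=s_{i_1}\circ\cdots\circ s_{i_\ell}=s_{j_1}\circ\cdots\circ s_{j_m}$ are two such expressions. The segments $s_{i_1}$ and $s_{j_1}$ are both initial subsegments of $s$ sharing its left endpoint; if they were distinct, then the shorter would be a proper subsegment of the longer sharing that endpoint, i.e.\ a split, contradicting Lemma~\ref{nosplit} since both lie in $\lambda_\downarrow(B)$. Hence $s_{i_1}=s_{j_1}$, and since the $s_i$ are distinct, $i_1=j_1$. Cancelling this common initial segment leaves two expressions of a strictly shorter element of $\overline{\lambda_\downarrow(B)}$, to which the inductive hypothesis applies, forcing $\ell=m$ and $i_q=j_q$ for all $q$. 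In particular the underlying subset of $\{s_1,\dots,s_k\}$ is determined by $s$.

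For the ``in particular'' clause, I would run the same matching argument, now comparing the unique decomposition of $s$ with that of $t$. Since $s_{\mathcal{D}},t_{\mathcal{D}'}\in\psi(B)$, Theorem~\ref{psib} gives $s,t\in\overline{\lambda_\downarrow(B)}$ with unique decompositions $s=s_{i_1}\circ\cdots\circ s_{i_\ell}$ and $t=s_{j_1}\circ\cdots\circ s_{j_p}$, and $t$ is an initial subsegment of $s$ because $s=t\circ t'$. Peeling off matching initial segments exactly as above yields $s_{j_q}=s_{i_q}$ for $q=1,\dots,p$; consequently the faultline at which $t$ ends is one of the break points of the decomposition of $s$, so $t'=s_{i_{p+1}}\circ\cdots\circ s_{i_\ell}$ is a (nonempty) composition of elements of $\lambda_\downarrow(B)$. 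Thus $t'\in\overline{\lambda_\downarrow(B)}$, and by Theorem~\ref{psib} there is at least one set $\mathcal{D}''$ of splits of $t'$ satisfying properties (i)--(iv) with $t'_{\mathcal{D}''}\in\psi(B)$.

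The main obstacle I anticipate is making the ``peeling'' step fully rigorous: one must confirm that at every stage the two competing initial segments genuinely share the current left endpoint and are both subsegments of the remaining portion of $s$ (so that the remaining part of $t$ stays a prefix of the remaining part of $s$), so that Lemma~\ref{nosplit} applies to force them to coincide. Once this alignment is verified, everything else is bookkeeping, and the passage from $t'\in\overline{\lambda_\downarrow(B)}$ to the existence of a suitable $\mathcal{D}''$ is immediate from the characterization of $\psi(B)$ in Theorem~\ref{psib}.
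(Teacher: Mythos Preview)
Your argument is correct and mirrors the paper's own proof: both invoke Theorem~\ref{psib} for existence, then use Lemma~\ref{nosplit} in a peeling argument (the paper peels from the right endpoint rather than the left, but this is cosmetic) to force the two decompositions to agree factor by factor. Your treatment of the ``in particular'' clause is actually more explicit than the paper's, which leaves that step to the reader; your deduction that $t'=s_{i_{p+1}}\circ\cdots\circ s_{i_\ell}\in\overline{\lambda_\downarrow(B)}$ and the appeal to Theorem~\ref{psib} for the existence of $\mathcal{D}''$ are exactly what is needed.
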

\begin{proof}
We know from Theorem~\ref{psib} that $s$ may be expressed as a composition of the elements $s_i := \lambda(B_i, B)$. Now suppose $s = s_{i_1}\circ \cdots \circ s_{i_l}$ and $s = s_{j_1} \circ \cdots \circ s_{j_m}$ for some $s_{i_1},\ldots, s_{i_1}, s_{j_1}, \ldots, s_{j_m} \in \{s_1, \ldots, s_k\}$. If $s_{i_l} \subseteq s_{j_m}$, then $s_{j_m} = s_{i_l} \circ t^\prime$ for some segment $t^\prime \in \text{Seg}(T)$. However, such an equation contradicts Lemma~\ref{nosplit}. The analogous argument shows $s_{j_m}$ is not properly contained in $s_{i_l}$. We conclude that $s_{j_m} = s_{i_l}$. By repeating this argument and removing pairs of equal segments $s_{j_n} = s_{i_r}$ with $n \le m$ and $r \le l$, we either obtain an equation $s_{j_1}\circ \cdots \circ s_{j_{n-1}} = s_{i_1}$ or $s_{j_1} = s_{i_1}\circ \cdots \circ s_{i_{m-1}}$. In either case, we reach a contradiction.
\end{proof}

\begin{remark}\label{remark_same_N}
As is mentioned in the proof of Theorem~\ref{psib}, given any two labels $s_\mathcal{D}, s_{\mathcal{D}^\prime} \in \psi(B)$, the sets $\mathcal{D}$ and $\mathcal{D}^\prime$ contain exactly the same non-faultline splits of $s$. In addition, for any biclosed set $B \in \text{Bic}(T)$, the set $\widetilde{\lambda}_{\downarrow}(B)$ consists of exactly the elements $s_\mathcal{D} \in \psi(B)$ with the property that there does not exist $s^\prime_{\mathcal{D}^\prime} \in \psi(B)$ where $s = s^\prime \circ t^\prime$ for some $t^\prime \in \text{Seg}(T)$.
\end{remark}



\subsection{Lattice structure of the shard intersection order} Let $\psi(B), \psi(B^\prime) \in \Psi(\text{Bic}(T))$ and let $B_1, \ldots, B_k \in \text{Bic}(T)$ (resp., $B^\prime_1, \ldots, B^\prime_l \in \text{Bic}(T)$) be all of the biclosed sets covered by $B$ (resp., $B^\prime$). As in the previous section, set $s_j := \lambda(B_j,B)$ for $j = 1, \ldots, k$ and $t_j := \lambda(B^\prime_j, B^\prime)$ for $j = 1, \ldots, l$. Consider $\psi(B) \cap \psi(B^\prime)$, and let $\text{Seg}(\psi(B)\cap \psi(B^\prime))$ denote the set of segments that appear in some label in the set $\psi(B)\cap \psi(B^\prime)$. 

We will prove that $\Psi(\text{Bic}(T))$ is a lattice by showing that $\psi(B) \cap \psi(B^\prime)$ is the meet of $\psi(B)$ and $\psi(B^\prime)$. This also shows that $\Psi(\text{Bic}(T))$ is a meet-subsemilattice of the Boolean lattice on the elements of $\mathcal{S}_T$. We first use Lemma~\ref{Lemma_ker_coker} and Remark~\ref{remark_same_N} to make the following important observation.

\begin{lemma}\label{intersection_ker_coker}
If $s_\mathcal{D}, s^\prime_{\mathcal{D}^\prime} \in \psi(B) \cap \psi(B^\prime)$ with $s = s^\prime \circ t^{\prime\prime}$ for some $t^{\prime\prime} \in \text{Seg}(T)$, then there exists $\mathcal{D}^{\prime\prime} \subset \text{Seg}(T)$ such that $t^{\prime\prime}_{\mathcal{D}^{\prime\prime}} \in \psi(B)\cap \psi(B^\prime)$.
\end{lemma}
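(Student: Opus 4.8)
The plan is to produce a single decoration $\mathcal{D}^{\prime\prime}$ by restricting the decoration $\mathcal{D}$ of the common label $s_\mathcal{D}$ to the subsegment $t^{\prime\prime}$, and then to verify, via the characterization in Theorem~\ref{psib}, that this one set is an admissible decoration of $t^{\prime\prime}$ in $\psi(B)$ and in $\psi(B^\prime)$ simultaneously. First I would record the structural input. Applying the ``in particular'' clause of Lemma~\ref{Lemma_ker_coker} to the pair $s_\mathcal{D}, s^\prime_{\mathcal{D}^\prime} \in \psi(B)$ with $s = s^\prime \circ t^{\prime\prime}$ shows that $t^{\prime\prime}$ is expressible as a composition of segments in $\lambda_\downarrow(B)$; by the uniqueness of such an expression (also Lemma~\ref{Lemma_ker_coker}), the $\lambda_\downarrow(B)$-decomposition of $s$ factors as the decomposition of $s^\prime$ followed by that of $t^{\prime\prime}$. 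Hence the vertex joining $s^\prime$ and $t^{\prime\prime}$ is a faultline of $s$, and every break of $t^{\prime\prime}$ is a break of $s$ of the same faultline/non-faultline type relative to the $\lambda_\downarrow(B)$-decomposition. The identical statements hold with $B$ replaced by $B^\prime$.

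Next I would define $\mathcal{D}^{\prime\prime}$. For each break $\{u_1,u_2\}$ of $t^{\prime\prime}$, with $u_1$ sharing the left endpoint of $t^{\prime\prime}$ and $u_2$ the right endpoint, observe that $\{s^\prime\circ u_1,\,u_2\}$ is the break of $s$ at the same vertex, so exactly one of $s^\prime\circ u_1$ and $u_2$ lies in $\mathcal{D}$. I put $u_2\in\mathcal{D}^{\prime\prime}$ if $u_2\in\mathcal{D}$, and $u_1\in\mathcal{D}^{\prime\prime}$ if $s^\prime\circ u_1\in\mathcal{D}$. This selects exactly one split from each break of $t^{\prime\prime}$, so $\mathcal{D}^{\prime\prime}$ automatically satisfies conditions (i)--(iii) of Theorem~\ref{psib}. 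Crucially, the set $\mathcal{D}^{\prime\prime}$ is read off from $\mathcal{D}$, $s^\prime$, and $t^{\prime\prime}$ alone, with no reference to any decomposition.

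The substance of the argument is condition (iv), and this is where the common label earns its keep. Let $b$ be a break of $t^{\prime\prime}$ that is non-faultline for the $\lambda_\downarrow(B)$-decomposition; by the first paragraph it is then a non-faultline break of $s$, so condition (iv) applied to $s_\mathcal{D}\in\psi(B)$ forces the split of $s$ chosen by $\mathcal{D}$ at $b$ to be the one dictated by the atomic decoration of the covering relation whose segment contains $b$. Translating that choice to $t^{\prime\prime}$ by the rule above yields precisely the split that condition (iv) forces for $t^{\prime\prime}$, so $\mathcal{D}^{\prime\prime}$ carries the correct non-faultline splits for $\psi(B)$; at the faultline breaks of $t^{\prime\prime}$ condition (iv) imposes nothing. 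Thus $t^{\prime\prime}_{\mathcal{D}^{\prime\prime}}\in\psi(B)$. Running the identical computation with the $\lambda_\downarrow(B^\prime)$-decomposition and the same label $s_\mathcal{D}\in\psi(B^\prime)$ gives $t^{\prime\prime}_{\mathcal{D}^{\prime\prime}}\in\psi(B^\prime)$, and the two memberships combine to the claim.

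The delicate point, and the reason for restricting the fixed witness $\mathcal{D}$ rather than invoking Lemma~\ref{Lemma_ker_coker} twice and trying to reconcile two a priori unrelated decorations, is that the faultline/non-faultline partition of the breaks of $t^{\prime\prime}$ genuinely differs between the two decompositions: a given break may be forced by $B$ while free for $B^\prime$, or forced by both. By Remark~\ref{remark_same_N} the only data distinguishing admissible decorations of $t^{\prime\prime}$ are the forced choices at non-faultline breaks, so the real risk is a break that is non-faultline for both $B$ and $B^\prime$ where the two forced splits disagree. Because $\mathcal{D}^{\prime\prime}$ is extracted from a single $\mathcal{D}$, its choice at such a break equals both the $B$-forced and the $B^\prime$-forced split by the previous paragraph, so no conflict can arise; confirming this compatibility is the heart of the proof.
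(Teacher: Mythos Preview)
Your proposal is correct and follows essentially the same approach as the paper: the paper defines the very same set $\mathcal{D}^{\prime\prime}$ (those splits $t^*$ of $t^{\prime\prime}$ with either $t^*\in\mathcal{D}$ or $s^\prime\circ t^*\in\mathcal{D}$) and then asserts that it satisfies the conditions of Theorem~\ref{psib} for both $\psi(B)$ and $\psi(B^\prime)$ because $\mathcal{D}$ does. Your write-up is more explicit about why condition (iv) transfers and about the point that the faultline/non-faultline partition may differ between the two decompositions, but the construction and the logic are identical.
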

\begin{proof}
By Lemma~\ref{Lemma_ker_coker}, we know that there exists $\mathcal{D}^1, \mathcal{D}^2 \subset \text{Seg}(T)$ such that $t^{\prime\prime}_{\mathcal{D}^1} \in \psi(B)$ and $t^{\prime\prime}_{\mathcal{D}^2} \in \psi(B^\prime).$ By Remark~\ref{remark_same_N}, this implies that $s^\prime$ and $t^{\prime\prime}$ are faultline splits of $s = s_{i_1}\circ \cdots \circ s_{i_n}$ and $s = t_{j_1}\circ \cdots \circ t_{j_m}$. 


Now define $$\mathcal{D}^{\prime\prime} := \left\{\begin{array}{lcl} t^* & : & \text{if $t^* \in \mathcal{D}$ and $t^*$ is a proper subsegment of $t^{\prime\prime}$,}\\
t^* & : & \text{if $s^\prime \circ t^{*} \in \mathcal{D}$ where $t^*$ is a split of $t^{\prime\prime}$.} \end{array}\right.$$ The splits in $\mathcal{D}^{\prime\prime}$ satisfy the properties in the statement of Theorem~\ref{psib} with respect to both $\psi(B)$ and $\psi(B^\prime)$, since the splits in $\mathcal{D}$ satisfy these. Thus $t^{\prime\prime}_{\mathcal{D}^{\prime\prime}} \in \psi(B) \cap \psi(B^\prime)$.
\end{proof}

%


To show that $\psi(B)\cap \psi(B^\prime) \in \Psi(\text{Bic}(T))$ we must construct a biclosed set $B^{\prime\prime}$ that satisfies $\psi(B^{\prime\prime}) = \psi(B) \cap \psi(B^\prime)$. With this goal in mind, we let $\{s^{(i)}_{\mathcal{D}^{(i)}}\}_{i = 1}^\ell$ denote the elements of $\psi(B)\cap \psi(B^\prime)$ where $s^{(i)}$ appears in exactly one label in $\psi(B)\cap \psi(B^\prime)$. If such a collection of labels exists, it is unique. We prove that such a collection exists in the following lemma. After that, we work toward showing that $\psi\left(\bigvee_{i=1}^\ell J(s^{(i)}_{\mathcal{D}^{(i)}})\right) = \psi(B)\cap \psi(B^\prime)$, which is the content of Lemma~\ref{meet_in_Psi}.

\begin{lemma}\label{nonempty_s^i}
The collection of elements $\{s^{(i)}_{\mathcal{D}^{(i)}}\}_{i = 1}^\ell \subset \psi(B) \cap \psi(B^\prime)$ is nonempty. In particular, the segments $s^{(1)}, \ldots, s^{(\ell)}$ are pairwise distinct.
\end{lemma}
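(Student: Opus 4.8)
The plan is to reduce the statement to a single claim: that a segment of minimal length in $\text{Seg}(\psi(B)\cap\psi(B'))$ appears in exactly one label. The pairwise distinctness of $s^{(1)},\ldots,s^{(\ell)}$ will then be immediate, since two distinct labels sharing a common segment would exhibit that segment appearing in more than one label, contradicting the defining property of the collection. For nonemptiness I may assume $\psi(B)\cap\psi(B')\neq\emptyset$ (otherwise the meet under construction is $\psi(\hat{0})=\emptyset$ and there is nothing to prove), and then choose $s\in\text{Seg}(\psi(B)\cap\psi(B'))$ of minimal length; if $s$ appears in a unique label, that label belongs to $\{s^{(i)}_{\mathcal{D}^{(i)}}\}$ and we are done.

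So I would argue by contradiction and suppose $s$ appears in two labels $s_{\mathcal{D}},s_{\mathcal{D}'}\in\psi(B)\cap\psi(B')$ with $\mathcal{D}\neq\mathcal{D}'$. Viewing both as elements of $\psi(B)$, Remark~\ref{remark_same_N} forces $\mathcal{D}$ and $\mathcal{D}'$ to share the same non-faultline splits of $s$ with respect to the $B$-decomposition, and the same argument applied in $\psi(B')$ does so for the $B'$-decomposition; hence any break at which they differ is a faultline break for both decompositions. Writing $s=h_1\circ h_2$ for such a break, we may assume $h_1\in\mathcal{D}$ is the chosen prefix. Then $h_1$ is a faultline split for both $B$ and $B'$, and because it shares the initial endpoint of $s$ it is a prefix in each: $h_1=s_{i_1}\circ\cdots\circ s_{i_p}=t_{j_1}\circ\cdots\circ t_{j_q}$. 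The goal is now to produce $\mathcal{E}$ with $(h_1)_{\mathcal{E}}\in\psi(B)\cap\psi(B')$, contradicting minimality.

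I would construct $\mathcal{E}$ by truncating $\mathcal{D}$ to $h_1$. Each internal vertex $v$ of $h_1$ is an internal vertex of $s$, and the break of $s$ at $v$ is $\{P_v,R_v\}$ with $P_v\subseteq h_1$ the prefix up to $v$ and $R_v=Q_v\circ h_2$, where $Q_v$ is the suffix of $h_1$ from $v$; I set the $\mathcal{E}$-split at $v$ to be $P_v$ if $\mathcal{D}$ chose $P_v$, and $Q_v$ otherwise. Properties (i)--(iii) of Theorem~\ref{psib} hold by construction. The essential point is property (iv) for $B$ and $B'$ at once: since $h_1$ is a prefix of $s$ in each decomposition, the break of $h_1$ at $v$ is non-faultline for $B$ (resp.\ $B'$) exactly when the break of $s$ at $v$ is, and in that case the split chosen there is forced by the atom data of the generator containing $v$. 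As $\mathcal{D}$ is one split set valid for both $B$ and $B'$, its choice $X_v$ at $v$ determines the forced split of $h_1$ in the same way from the $B$- and the $B'$-side, so both forced splits coincide with our $\mathcal{E}$-choice. Hence $(h_1)_{\mathcal{E}}$ is admissible in both $\psi(B)$ and $\psi(B')$, giving a strictly shorter segment in $\text{Seg}(\psi(B)\cap\psi(B'))$ and the desired contradiction.

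The hard part is exactly this simultaneous verification of property (iv): the truncated split set must be admissible for the $B$- and the $B'$-decompositions together. What makes it go through is that admissibility at a non-faultline break is governed entirely by generator-level atom data, and that the break-by-break correspondence between $s$ and its prefix $h_1$ holds verbatim for either decomposition; thus the single consistent label $s_{\mathcal{D}}$ already records compatible forced choices on both sides, and truncation preserves them. I expect the bookkeeping of which breaks of $h_1$ are faultline versus non-faultline with respect to each of $B$ and $B'$, and matching the forced splits across the two decompositions, to be the most delicate portion of the write-up.
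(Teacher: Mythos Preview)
Your proposal is correct and follows the same overall strategy as the paper: argue by contradiction from a minimal segment, find a break at which two labels on that segment differ (necessarily a faultline break for both $B$ and $B'$ by Remark~\ref{remark_same_N}), and manufacture a strictly shorter label lying in $\psi(B)\cap\psi(B')$.

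The execution diverges from the paper in how that shorter label is verified. The paper defines the truncated split set as $\mathcal{D}(t):=\{s'\in J(s_{\mathcal{D}}):\ s'\text{ is a split of }t\}$, invokes Lemma~\ref{Lemma_join_irr_containment} to place $J(t_{\mathcal{D}(t)})\backslash\{t\}$ inside $J(s_{\mathcal{D}})\backslash\{s\}$, and then builds explicit biclosed sets $B^{2}=(\bigwedge_i B_i)\vee(J(t_{\mathcal{D}(t)})\backslash\{t\})$ and $B^{2}\sqcup\{t\}$ in the facial interval, checking closedness directly to obtain the covering relation with label $t_{\mathcal{D}(t)}$. You instead define $\mathcal{E}$ by a vertex-by-vertex truncation of $\mathcal{D}$ and verify conditions (i)--(iv) of Theorem~\ref{psib} combinatorially for both the $B$- and the $B'$-decompositions simultaneously. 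These two constructions in fact yield the same set (your $\mathcal{E}$ equals the paper's $\mathcal{D}(t)$, as one checks from the description of $J(s_{\mathcal{D}})$ and Lemma~\ref{Lemma_J_biclosed}); your route is more self-contained once Theorem~\ref{psib} is in hand and avoids the biclosed-set bookkeeping, while the paper's route ties the argument back to the join-irreducible structure and reuses machinery already developed. Your observation that a break of $h_1$ is faultline (for either decomposition) exactly when the corresponding break of $s$ is, and that the forced non-faultline split truncates correctly, is exactly what is needed and is sound; this is the same content the paper packages into Lemma~\ref{Lemma_join_irr_containment} together with the closedness check for $B^{2}\sqcup\{t\}$.
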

\begin{proof}
Suppose that any segment in $\text{Seg}(\psi(B)\cap \psi(B^\prime))$ belongs to at least two labels of $\psi(B) \cap \psi(B^\prime)$. Let $s_\mathcal{D}$ and $s_{\mathcal{D}^\prime}$ be minimal elements of the subposet $\psi(B)\cap \psi(B^\prime)$ of $\mathcal{S}_T$. Since $\mathcal{D} \neq \mathcal{D}^\prime$, there exists $t \in \mathcal{D}$ and $t^\prime \in \mathcal{D}^\prime$ such that $s = t \circ t^\prime$. From Remark~\ref{remark_same_N}, segments $t$ and $t^\prime$ are faultline splits of $s$ in terms of its expression as a composition of segments in $\{s_1,\ldots, s_k\}$. Segments $t$ and $t^\prime$ are also faultline splits of $s$ in terms of its expression as a composition of segments in $\{t_1, \ldots, t_l\}$. In particular, $t, t^\prime \in \overline{\{s_1, \ldots, s_k\}}\cap \overline{\{t_1, \ldots, t_l\}}.$

We now show that $t_{\mathcal{D}(t)} \in \psi(B)\cap \psi(B^\prime)$ where $\mathcal{D}(t) := \{s^\prime \in J(s_\mathcal{D}): \text{$s^\prime$ is a split of $t$}\}.$ This contradicts that $s_\mathcal{D}$ is a minimal element of the subposet $\psi(B)\cap \psi(B^\prime)$ and therefore completes the proof. We show that $t_{\mathcal{D}(t)} \in \psi(B)$ and an analogous argument shows that $t_{\mathcal{D}(t)} \in \psi(B^\prime)$ so we omit the latter.

First, consider the biclosed sets $$B^1 := \left(\bigwedge_{i=1}^k B_i\right) \vee (J(s_\mathcal{D})\backslash \{s\}), \ B^2 := \left(\bigwedge_{i=1}^k B_i\right) \vee (J(t_{\mathcal{D}(t)})\backslash \{t\}) \in \left[\bigwedge_{i=1}^k B_i, B\right].$$ We know that $B^2 \in \left[\bigwedge_{i=1}^k B_i, B\right]$ since $J(t_{\mathcal{D}(t)})\backslash \{t\} < J(s_\mathcal{D})\backslash \{s\}$ by Lemma~\ref{Lemma_join_irr_containment}. Note that $t \not \in B^2$ since $t \in \overline{\{s_1, \ldots, s_k\}}\cap \overline{\{t_1, \ldots, t_l\}}.$  We show that $\widetilde{\lambda}(B^2, B^2 \sqcup \{t\}) = t_{\mathcal{D}(t)}$ by showing that $B^2 \sqcup \{t\}$ is biclosed. From this it follows that $t_{\mathcal{D}(t)} \in \psi(B)$.

By construction, $B^2\sqcup \{t\}$ is coclosed. Since $B^2$ is closed, we show that $B^2\sqcup \{t\}$ is closed by showing that, without loss of generality, for any $t^1 \in \bigwedge_{i = 1}^k B_i$ that is composable with $t$, we have $t\circ t^1 \in \bigwedge_{i = 1}^k B_i$. We know that $t, t^1 \in B$ so $t \circ t^1 \in B.$ 

Now, suppose that $t\circ t^1 \in \overline{\{s_1,\ldots, s_k\}}$. This means there exists $\mathcal{D}^{*} \subset \text{Seg}(T)$ such that $(t \circ t^1)_{\mathcal{D}^*} \in \psi(B)$. By Lemma~\ref{Lemma_ker_coker}, we know that there is a unique way to express $t,$ and $t\circ t^1$ as a composition of elements of $\overline{\{s_1,\ldots, s_k\}}$. It follows that $t^1 \in \overline{\{s_1, \ldots, s_k\}}$, but this contradicts that $t^1 \in \bigwedge_{i = 1}^k B_i$. \end{proof}

The following proposition is a crucial step in proving the lattice property of $\Psi(\text{Bic}(T))$. The reader should compare the statement with the characterization of elements of $\widetilde{\lambda}_{\downarrow}(B)$ in Remark~\ref{remark_same_N}.


\begin{proposition}\label{Lemma_property_of_s_i}
The labels $\{s^{(i)}_{\mathcal{D}^{(i)}}\}_{i = 1}^\ell$ are precisely the elements $s_\mathcal{D} \in \psi(B)\cap \psi(B^\prime)$ with the property that there does not exist $s^\prime_{\mathcal{D}^\prime} \in \psi(B) \cap \psi(B^\prime)$ where $s = s^\prime \circ t^\prime$ for some $t^\prime \in \text{Seg}(T)$.
\end{proposition}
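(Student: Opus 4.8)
The statement asks me to identify the distinguished labels $\{s^{(i)}_{\mathcal{D}^{(i)}}\}_{i=1}^\ell$ — those whose underlying segment appears in exactly one label of $\psi(B)\cap\psi(B')$ — with the labels $s_\mathcal{D}\in\psi(B)\cap\psi(B')$ that are ``minimal'' in the sense that no $s'_{\mathcal{D}'}\in\psi(B)\cap\psi(B')$ admits a factorization $s=s'\circ t'$. Since this is an ``if and only if'' of two descriptions of the same subset, I plan to prove the two inclusions separately, leaning heavily on Lemma~\ref{intersection_ker_coker} and Remark~\ref{remark_same_N}.

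\smallskip

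\emph{First I would prove that each $s^{(i)}_{\mathcal{D}^{(i)}}$ has the factorization-minimality property.} Suppose for contradiction that $s^{(i)}=s'\circ t'$ with $s'_{\mathcal{D}'}\in\psi(B)\cap\psi(B')$ for some $t'\in\text{Seg}(T)$. Applying Lemma~\ref{intersection_ker_coker} to the pair $s^{(i)}_{\mathcal{D}^{(i)}}$ and $s'_{\mathcal{D}'}$, I obtain $t'_{\mathcal{D}''}\in\psi(B)\cap\psi(B')$. The key point is that $s'$ is a \emph{proper} subsegment of $s^{(i)}$ (since $t'$ is a genuine segment), so $s'\neq s^{(i)}$; hence the segment $s'$ gives rise to a label distinct from $s^{(i)}_{\mathcal{D}^{(i)}}$. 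To reach the contradiction I must show $s^{(i)}$ itself appears in a second label — that is, I must build a label $s^{(i)}_{\widetilde{\mathcal{D}}}\in\psi(B)\cap\psi(B')$ with $\widetilde{\mathcal{D}}\neq\mathcal{D}^{(i)}$. Here I would use Lemma~\ref{Lemma_ker_coker} and Remark~\ref{remark_same_N}: the factorization $s^{(i)}=s'\circ t'$ realizes $s'$ as a faultline split of $s^{(i)}$, and the freedom in choosing which split of each faultline break lies in $\mathcal{D}$ produces at least two admissible sets $\mathcal{D}$ via Theorem~\ref{psib}(iv), both lying in $\psi(B)\cap\psi(B')$ because both $s'$ and $t'$ occur in intersection labels. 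This contradicts $s^{(i)}$ appearing in exactly one label, so the factorization-minimality property holds.

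\smallskip

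\emph{Conversely, I would show that any factorization-minimal $s_\mathcal{D}\in\psi(B)\cap\psi(B')$ equals some $s^{(i)}_{\mathcal{D}^{(i)}}$}, i.e.\ that $s$ appears in exactly one label of $\psi(B)\cap\psi(B')$. Suppose $s_\mathcal{D}$ and $s_{\mathcal{D}'}$ are two distinct labels of $\psi(B)\cap\psi(B')$ sharing the segment $s$. Then $\mathcal{D}\neq\mathcal{D}'$, and by Remark~\ref{remark_same_N} they agree on non-faultline splits, so they must differ at a faultline break: there exist a faultline split $t\in\mathcal{D}$ and its complementary faultline split $t'\in\mathcal{D}'$ with $s=t\circ t'$. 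Mimicking the argument in the proof of Lemma~\ref{nonempty_s^i} (with $t_{\mathcal{D}(t)}$), I would produce a label $t_{\mathcal{D}''}\in\psi(B)\cap\psi(B')$, exhibiting a proper factorization $s=t\circ t'$ with $t_{\mathcal{D}''}$ in the intersection. This contradicts factorization-minimality of $s_\mathcal{D}$, so $s$ appears in a unique label, giving $s_\mathcal{D}=s^{(i)}_{\mathcal{D}^{(i)}}$ for some $i$.

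\smallskip

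\emph{The main obstacle} I anticipate is the first direction: converting the single proper factorization $s^{(i)}=s'\circ t'$ into a genuinely \emph{second} admissible label on $s^{(i)}$. It is not a priori obvious that both the faultline break split $s'$ and its complement can simultaneously be packaged into valid sets $\widetilde{\mathcal{D}}$ satisfying all four conditions of Theorem~\ref{psib} while remaining in the intersection of two distinct shard sets; the bookkeeping of which splits lie in $\mathcal{D}_{i_j}$ (condition (iv)) for \emph{both} $\psi(B)$ and $\psi(B')$ simultaneously is the delicate part, and I would handle it by invoking Lemma~\ref{intersection_ker_coker} to transport the relevant splits across the factorization and Remark~\ref{remark_same_N} to guarantee the non-faultline parts match in both sets.
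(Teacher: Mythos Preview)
Your plan is correct and matches the paper's proof in both directions. Your anticipated obstacle in the first direction dissolves more easily than you expect: once Remark~\ref{remark_same_N} shows that $\{s',t'\}$ is a faultline break of $s^{(i)}$ in both expressions, the second label is simply $s^{(i)}_{(\mathcal{D}^{(i)}\setminus\{s'\})\sqcup\{t'\}}$ (taking without loss of generality $s'\in\mathcal{D}^{(i)}$), which lies in $\psi(B)\cap\psi(B')$ because condition~(iv) of Theorem~\ref{psib} constrains only non-faultline splits, so swapping at a faultline break is automatically valid in both shard sets.
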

\begin{proof}
Suppose there exists a label $s^\prime_{\mathcal{D}^\prime} \in \psi(B) \cap \psi(B^\prime)$ such that $s^{(i)} = s^\prime \circ t^\prime$ for some $t^\prime \in \text{Seg}(T)$. By Lemma~\ref{intersection_ker_coker}, there exists $\mathcal{D}^{\prime\prime} \subset \text{Seg}(T)$ such that $t^{\prime}_{\mathcal{D}^{\prime\prime}} \in \psi(B)\cap \psi(B^\prime)$. By Remark~\ref{remark_same_N}, $s^\prime$ and $t^\prime$ are faultline splits of $s^{(i)}$ regardless of whether this segment is expressed in terms of $s_1, \ldots, s_k$ or $t_1, \ldots, t_l$. Without loss of generality, we assume that $s^\prime \in \mathcal{D}^{(i)}$. Thus $s^{(i)}_{\mathcal{D}^{(i)}}, s^{(i)}_{(\mathcal{D}^{(i)}\backslash\{s^\prime\}) \sqcup \{t^\prime\}} \in \psi(B)\cap \psi(B^\prime)$. However, this contradicts the definition of $s^{(i)}_{\mathcal{D}^{(i)}}$.

Let $s_{\mathcal{D}} \in \psi(B)\cap \psi(B^\prime)$ be a label with the stated property. Suppose there are two distinct labels $s_{\mathcal{D}}, s_{\mathcal{D}^\prime} \in \psi(B)\cap \psi(B^\prime)$. Then there exist $t \in \mathcal{D}$ and $t^\prime \in \mathcal{D}^\prime$ such that $s = t \circ t^\prime$. We claim that $t_{\mathcal{D}(t)}, t^\prime_{\mathcal{D}^\prime(t^\prime)} \in  \psi(B)\cap \psi(B^\prime)$ where $\mathcal{D}(t) := \{s^\prime \in J(s_\mathcal{D}): \text{$s^\prime$ is a split of $t$}\}$ and $\mathcal{D}^\prime(t^\prime) := \{s^\prime \in J(s_{\mathcal{D}^\prime}): \text{$s^\prime$ is a split of $t^\prime$}\}.$ This claim contradicts that $s_{\mathcal{D}}$ has the property in the statement of the lemma. Thus $s$ appears in exactly one label in $\psi(B)\cap \psi(B^\prime)$. Therefore, $s = s^{(i)}$ and $\mathcal{D}= \mathcal{D}^{(i)}$ for some $i = 1, \ldots, \ell$.

One proves our claim by adapting the proof of Lemma~\ref{nonempty_s^i}.
\end{proof}

%
%

\begin{lemma}\label{Lemma_seg_of_int}
$\text{Seg}(\psi(B)\cap \psi(B^\prime)) = \overline{\{s^{(1)}, \ldots, s^{(\ell)}\}}$. 
\end{lemma}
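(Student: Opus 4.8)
The plan is to prove the two inclusions $\text{Seg}(\psi(B)\cap \psi(B^\prime)) \subseteq \overline{\{s^{(1)},\ldots,s^{(\ell)}\}}$ and $\overline{\{s^{(1)},\ldots,s^{(\ell)}\}} \subseteq \text{Seg}(\psi(B)\cap \psi(B^\prime))$ separately. The containment $\{s^{(1)},\ldots,s^{(\ell)}\}\subseteq \text{Seg}(\psi(B)\cap \psi(B^\prime))$ is immediate from the definition of the labels $s^{(i)}_{\mathcal{D}^{(i)}}$, so the statement reduces to proving: (a) every segment occurring in a label of $\psi(B)\cap \psi(B^\prime)$ lies in $\overline{\{s^{(1)},\ldots,s^{(\ell)}\}}$, and (b) the set $\text{Seg}(\psi(B)\cap \psi(B^\prime))$ is closed. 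Once (b) is known, $\text{Seg}(\psi(B)\cap \psi(B^\prime))$ is a closed set containing every $s^{(i)}$, hence contains $\overline{\{s^{(1)},\ldots,s^{(\ell)}\}}$, while (a) supplies the reverse inclusion.

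For (a) I would induct on the length of the unique expression of $s$ as a composition of the segments $s_i=\lambda(B_i,B)$ provided by Lemma~\ref{Lemma_ker_coker}. If $s=s^{(i)}$ for some $i$, there is nothing to prove. Otherwise Proposition~\ref{Lemma_property_of_s_i} yields a label $s^\prime_{\mathcal{D}^\prime}\in\psi(B)\cap \psi(B^\prime)$ with $s=s^\prime\circ t^\prime$ for some $t^\prime\in\text{Seg}(T)$, and then Lemma~\ref{intersection_ker_coker} produces $\mathcal{D}^{\prime\prime}$ with $t^\prime_{\mathcal{D}^{\prime\prime}}\in\psi(B)\cap \psi(B^\prime)$. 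By Remark~\ref{remark_same_N} both $s^\prime$ and $t^\prime$ are faultline splits of $s$, hence each has strictly smaller composition length; the inductive hypothesis places both in $\overline{\{s^{(1)},\ldots,s^{(\ell)}\}}$, and closedness of the latter gives $s=s^\prime\circ t^\prime\in\overline{\{s^{(1)},\ldots,s^{(\ell)}\}}$.

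The heart of the proof is (b). Given composable $u,v\in\text{Seg}(\psi(B)\cap \psi(B^\prime))$ with $s:=u\circ v\in\text{Seg}(T)$, choose labels $u_{\mathcal{A}},v_{\mathcal{C}}\in\psi(B)\cap \psi(B^\prime)$. Since $u_{\mathcal{A}}$ labels a covering relation inside the facial interval of $B$, it lies in $\widetilde{\lambda}_\downarrow$ of the top of that covering relation, so Proposition~\ref{Prop_join_irr_des} gives $J(u_{\mathcal{A}})\le B$; the same reasoning shows $J(u_{\mathcal{A}}),J(v_{\mathcal{C}})\le B$ and $\le B^\prime$. Setting $X:=J(u_{\mathcal{A}})\vee J(v_{\mathcal{C}})$ yields a biclosed set with $X\le B$ and $X\le B^\prime$, and since $u,v\in X$ with $X$ closed we have $s=u\circ v\in X$. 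The purpose of introducing the single set $X$ is that any label supported inside $X$ becomes available for both $B$ and $B^\prime$ at once. I would then build a label $s_{\mathcal{D}}$ by selecting, for each break of $s$, the split of $s$ at that break lying in $X$: breaks interior to $u$ (resp.\ $v$) are handled by converting the splits in $\mathcal{A}$ (resp.\ $\mathcal{C}$) into splits of $s$ using that $X$ is closed and coclosed, and the junction break is handled by $u$ itself. This gives $J(s_{\mathcal{D}})\le X\le B,B^\prime$, and a construction modeled on the proof of Lemma~\ref{nonempty_s^i} — forming $m\vee(J(s_{\mathcal{D}})\backslash\{s\})$ for $m=\bigwedge_i B_i$ and for $m=\bigwedge_j B^\prime_j$ and checking that adjoining $s$ stays biclosed — shows $s_{\mathcal{D}}\in\psi(B)$ and $s_{\mathcal{D}}\in\psi(B^\prime)$, so that $s\in\text{Seg}(\psi(B)\cap \psi(B^\prime))$.

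The main obstacle is precisely this last step: manufacturing a single valid label $s_{\mathcal{D}}$ that works for both facial intervals. The delicate point is the conversion of the chosen splits of $u$ and $v$ into genuine splits of $s$ — a split of $u$ meeting the junction vertex $j$ is not a split of $s$, and one must invoke closedness of $X$ (for instance that $[y,j]\circ v\in X$) to replace it by the correct split of $s$ — together with verifying that the resulting $\mathcal{D}$ satisfies properties (i)--(iv) of Theorem~\ref{psib} relative to the decompositions coming from both $B$ and $B^\prime$, and that $s\notin m\vee(J(s_{\mathcal{D}})\backslash\{s\})$ so that a genuine covering relation is produced. Working with the common set $X$ is what forces the same $\mathcal{D}$ to serve both sides and thereby places $s_{\mathcal{D}}$ in the intersection, rather than merely in each of $\psi(B)$ and $\psi(B^\prime)$ separately.
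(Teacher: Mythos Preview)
Your argument for the inclusion $\text{Seg}(\psi(B)\cap\psi(B'))\subseteq\overline{\{s^{(1)},\ldots,s^{(\ell)}\}}$ is essentially identical to the paper's: both induct on the length of a segment, invoke Proposition~\ref{Lemma_property_of_s_i} to peel off a factor, use Lemma~\ref{intersection_ker_coker} to see the cofactor also lies in the intersection, and finish by closedness of $\overline{\{s^{(1)},\ldots,s^{(\ell)}\}}$.

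For the reverse inclusion you take a genuinely different route. You prove the stronger statement that $\text{Seg}(\psi(B)\cap\psi(B'))$ is closed under composition of \emph{arbitrary} members, by introducing the auxiliary biclosed set $X=J(u_{\mathcal A})\vee J(v_{\mathcal C})$ and building a common $\mathcal D$ from inside $X$. The paper instead works only with compositions of the special segments $s^{(i)}$: since each $s^{(i)}$ already lies in $\overline{\lambda_\downarrow(B)}\cap\overline{\lambda_\downarrow(B')}$, so does $s^{(i)}\circ s^{(j)}$, and Theorem~\ref{psib} is invoked directly (the unwritten point being that the common data $\mathcal D^{(i)},\mathcal D^{(j)}$ glue to a single $\mathcal D$ satisfying (iv) for both $B$ and $B'$). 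The paper's shortcut buys brevity at the cost of leaving the construction of the common $\mathcal D$ implicit; your route is longer but makes that construction explicit and in fact yields the stronger closedness statement. One remark on your execution: once you have built $\mathcal D$ from $\mathcal A$ and $\mathcal C$ as you describe, it is simpler to verify condition (iv) of Theorem~\ref{psib} for both $B$ and $B'$ directly (this follows immediately from (iv) for $u_{\mathcal A}$ and $v_{\mathcal C}$) rather than detouring through $J(s_{\mathcal D})\le X$ and the Lemma~\ref{nonempty_s^i}-style biclosedness check; the latter is correct but is more than you need.
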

\begin{proof}
To show that $\overline{\{s^{(1)}, \ldots, s^{(\ell)}\}} \subset \text{Seg}(\psi(B)\cap \psi(B^\prime))$, it is enough to show that the composition of any two composable segments $s^{(i)}$ and $s^{(j)}$ belongs to $\text{Seg}(\psi(B)\cap \psi(B^\prime))$. We know that $s^{(i)}\circ s^{(j)} \in \overline{\{s_1, \ldots, s_k\}} \cap \overline{\{t_1, \ldots, t_l\}}$ since this is true for $s^{(i)}$ and $s^{(j)}$. Now from the description of $\psi(B)$ and $\psi(B^\prime)$ obtained in Theorem~\ref{psib}, we see that $s^{(i)}\circ s^{(j)} \in \text{Seg}(\psi(B)\cap \psi(B^\prime))$.

To prove the opposite containment, let $s \in \text{Seg}(\psi(B) \cap \psi(B^\prime))$, and assume that any segment in $\text{Seg}(\psi(B)\cap \psi(B^\prime))$ that is shorter than $s$ belongs to $\overline{\{s^{(1)}, \ldots, s^{(\ell)}\}}$. Since $s \in \text{Seg}(\psi(B) \cap \psi(B^\prime))$, there exists $\mathcal{D} \subset \text{Seg}(T)$ such that $s_{\mathcal{D}} \in \psi(B) \cap \psi(B^\prime)$.

We can further assume that $s \neq s^{(i)}$ for any $i = 1, \ldots, \ell$. By Proposition~\ref{Lemma_property_of_s_i}, this implies that there exists $s^{\prime}_{\mathcal{D}^{\prime}} \in \psi(B)\cap \psi(B^\prime)$ where $s = s^{\prime}\circ t^{\prime\prime}$ for some $t^{\prime\prime} \in \text{Seg}(T)$. By Lemma~\ref{intersection_ker_coker}, there exists $\mathcal{D}^{\prime\prime} \subset \text{Seg}(T)$ such that $t^{\prime\prime}_{\mathcal{D}^{\prime\prime}} \in \psi(B)\cap \psi(B^\prime)$. Since $s^\prime$ and $t^{\prime\prime}$ are both shorter than $s$, we have that $s^\prime, t^{\prime\prime} \in \overline{\{s^{(1)}, \ldots, s^{(\ell)}\}}$ so $s = s^\prime\circ t^{\prime\prime} \in \overline{\{s^{(1)}, \ldots, s^{(\ell)}\}}$. \end{proof}

\begin{lemma}\label{Lemma_cjr_for_meet}
{The expression $\bigvee_{i=1}^\ell J(s^{(i)}_{\mathcal{D}^{(i)}})$ is a canonical join representation.}
\end{lemma}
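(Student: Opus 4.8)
The plan is to use that $\Delta^{CJ}(\text{Bic}(T))$ is a flag complex (\cite[Theorem 1.1]{barnard2016canonical}), exactly as in the proof of Theorem~\ref{Thm_CJC}. Thus it suffices to show that each pair $\{J(s^{(i)}_{\mathcal{D}^{(i)}}), J(s^{(j)}_{\mathcal{D}^{(j)}})\}$ with $i \neq j$ is an edge, i.e. that $J(s^{(i)}_{\mathcal{D}^{(i)}}) \vee J(s^{(j)}_{\mathcal{D}^{(j)}})$ is a canonical join representation, and for this I would invoke Lemma~\ref{Lemma_is_canonical} and verify its three hypotheses for every such pair. A recurring tool is the observation that $s_\mathcal{D} \in \psi(B)$ forces $J(s_\mathcal{D}) \le B$: the covering relation in the facial interval carrying the label $s_\mathcal{D}$ has top $w \sqcup \{s\} \le B$ with $s_\mathcal{D} \in \widetilde{\lambda}_\downarrow(w \sqcup \{s\})$, so Proposition~\ref{Prop_join_irr_des} applies. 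Hence each $J(s^{(i)}_{\mathcal{D}^{(i)}}) \le B \wedge B'$.

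Hypothesis (1) of Lemma~\ref{Lemma_is_canonical}, distinctness of $s^{(i)}$ and $s^{(j)}$, is precisely the second assertion of Lemma~\ref{nonempty_s^i}. For hypotheses (2) and (3) I would argue by contradiction, in each case producing a proper factorization of one of the segments that is witnessed \emph{inside} $\psi(B) \cap \psi(B')$, which is forbidden by the characterization of the $s^{(i)}_{\mathcal{D}^{(i)}}$ in Proposition~\ref{Lemma_property_of_s_i}. For (3), if $J(s^{(i)}_{\mathcal{D}^{(i)}}) \le J(s^{(j)}_{\mathcal{D}^{(j)}})$ then $s^{(i)} \in J(s^{(j)}_{\mathcal{D}^{(j)}})$, so $s^{(i)}$ is a subsegment of $s^{(j)}$; using uniqueness of the atom factorizations (Lemma~\ref{Lemma_ker_coker}) together with Lemma~\ref{nosplit}, one sees that $s^{(i)}$ occupies a consecutive run of the faultline breaks of $s^{(j)}$, so the split of $s^{(j)}$ cut off at the left endpoint of $s^{(i)}$ is a proper prefix or suffix of $s^{(j)}$ belonging to $J(s^{(j)}_{\mathcal{D}^{(j)}})$. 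Promoting this split to a common label factors $s^{(j)}$ and contradicts Proposition~\ref{Lemma_property_of_s_i}. For (2), suppose $s^{(i)} = t^1 \circ \cdots \circ t^m$ with $m \ge 2$ and each factor in $J(s^{(i)}_{\mathcal{D}^{(i)}}) \cup J(s^{(j)}_{\mathcal{D}^{(j)}})$; as in the proof of Lemma~\ref{lemma_comp_not_canonical} the factors may be taken to alternate between the two sets, since no two elements of a single $J(\cdot)$ are composable (Lemma~\ref{Lemma_J_biclosed}), and then the initial factor $t^1$ is a split of $s^{(i)}$ lying in the corresponding decoration. Promoting $t^1$ to a common label again factors $s^{(i)}$ and contradicts Proposition~\ref{Lemma_property_of_s_i}.

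The main obstacle, in both (2) and (3), is the promotion step: a segment detected only as an element of a join-irreducible biclosed set, or as a prefix/suffix split, must be shown to occur as an honest common label $s'_{\mathcal{D}'} \in \psi(B) \cap \psi(B')$, that is, with a \emph{single} decoration $\mathcal{D}'$ simultaneously admissible for $\psi(B)$ and for $\psi(B')$, rather than merely lying in $\text{Seg}(\psi(B))$ and $\text{Seg}(\psi(B'))$ with possibly incompatible decorations. I would handle this exactly as in the proofs of Lemma~\ref{nonempty_s^i} and Proposition~\ref{Lemma_property_of_s_i}: form the biclosed set $(\bigwedge_{i} B_i) \vee (J(s'_{\mathcal{D}'}) \backslash \{s'\})$ inside the facial interval $\big[\bigwedge_{i} B_i, B\big]$ and its counterpart for $B'$, and check that adjoining $s'$ yields a biclosed set whose new label is $s'_{\mathcal{D}'}$, using Lemma~\ref{Lemma_join_irr_containment} to control the decoration and Lemma~\ref{Lemma_seg_of_int} together with Remark~\ref{remark_same_N} to guarantee that the non-faultline part of $\mathcal{D}'$ agrees on the two sides. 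Once every intermediate segment is realized as a common label, the factorizations obtained in (2) and (3) directly contradict Proposition~\ref{Lemma_property_of_s_i}, so all three hypotheses of Lemma~\ref{Lemma_is_canonical} hold for each pair, and the flag property completes the proof.
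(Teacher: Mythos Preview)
Your overall strategy (reduce to pairs via the flag-complex theorem and verify the three hypotheses of Lemma~\ref{Lemma_is_canonical}) matches the paper exactly, and your treatment of hypotheses (1) and (2) is essentially the paper's argument, modulo one omitted step: before you can ``promote'' $t^1$ to a common label you must first check that $t^1$ and its complement $t^2\circ\cdots\circ t^m$ are \emph{faultline} splits of $s^{(i)}$ with respect to both $B$ and $B'$. The paper does this by noting that both lie in $J(s^{(i)}_{\mathcal{D}^{(i)}})\vee J(s^{(j)}_{\mathcal{D}^{(j)}})\le B$, so if the break were non-faultline both halves would land in $\bigwedge_i B_i$, forcing $s^{(i)}\in\bigwedge_i B_i$, a contradiction. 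Once you have this, the promotion you describe (as in the proof of Lemma~\ref{nonempty_s^i}) goes through.

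Your argument for hypothesis (3), however, has a genuine gap. From $J(s^{(i)}_{\mathcal{D}^{(i)}})\le J(s^{(j)}_{\mathcal{D}^{(j)}})$ you correctly deduce $s^{(i)}\in J(s^{(j)}_{\mathcal{D}^{(j)}})$, hence $s^{(i)}\subseteq s^{(j)}$, but the claim that $s^{(i)}$ then ``occupies a consecutive run of the faultline breaks of $s^{(j)}$'' does \emph{not} follow from Lemma~\ref{Lemma_ker_coker} and Lemma~\ref{nosplit}. Those results only forbid one atom being a \emph{split} of another; they allow two atoms to overlap without either being a split. Concretely, if $s^{(i)}\in S(t,\mathcal{D}^{(j)})$ for some $t\in\mathcal{D}^{(j)}$, write $s^{(j)}=a\circ s^{(i)}\circ b$; then both endpoints of $s^{(i)}$ can lie strictly inside atoms of $s^{(j)}$, and your prefix $a$ is a split of $s^{(j)}$ that is \emph{not} in $\mathcal{D}^{(j)}$ (indeed $s^{(i)}$ composable with $a$ violates condition (ii) in the definition of $S(t)$), so $a\notin J(s^{(j)}_{\mathcal{D}^{(j)}})$ and the promotion step has nothing to promote. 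The paper confronts exactly this obstruction and takes a different route: it shows (using Lemma~\ref{nosplit}) that the atom sets of $s^{(i)}$ and $s^{(j)}$ in $\{s_1,\ldots,s_k\}$ are disjoint, then invokes the auxiliary Lemma~\ref{Lemma_special_subseg} to find some $s_{k^*}$ in $J(s^{(i)}_{\mathcal{D}^{(i)}})$, and finally uses irredundancy of the canonical join representation $B=\bigvee_i J({s_i}_{\mathcal{D}_i})$ to conclude $J(s^{(i)}_{\mathcal{D}^{(i)}})\not\le J(s^{(j)}_{\mathcal{D}^{(j)}})$. This ingredient is missing from your sketch and cannot be replaced by the Lemma~\ref{nonempty_s^i}\,/\,Proposition~\ref{Lemma_property_of_s_i} template.
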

\begin{proof}
By Theorem~\ref{Thm_CJC}, it is enough to show that for any $s^{(i)}_{\mathcal{D}^{(i)}}, s^{(j)}_{\mathcal{D}^{(j)}} \in \psi(B) \cap \psi(B^\prime)$ with $i \neq j$ the expression $J(s^{(i)}_{\mathcal{D}^{(i)}})\vee J(s^{(j)}_{\mathcal{D}^{(j)}})$ is a canonical join representation. It follows from Lemma~\ref{nonempty_s^i} that $s^{(i)} \neq s^{(j)}$ so we need to show that $s^{(i)}_{\mathcal{D}^{(i)}}$ and $s^{(j)}_{\mathcal{D}^{(j)}}$ satisfy properties 2) and 3) from Theorem~\ref{Thm_CJC}.

First, we show $s^{(i)}_{\mathcal{D}^{(i)}}$ and $s^{(j)}_{\mathcal{D}^{(j)}}$ satisfy property 2) by showing that, without loss of generality, $s^{(i)}$ is not expressible as a composition of at least two segments from $J(s^{(i)}_{\mathcal{D}^{(i)}})\cup J(s^{(j)}_{\mathcal{D}^{(j)}})$. Suppose we can write $s^{(i)} = t^1\circ \cdots \circ  t^r$ where $t^1, \ldots, t^r \in J(s^{(i)}_{\mathcal{D}^{(i)}})\cup J(s^{(j)}_{\mathcal{D}^{(j)}})$ with $r \ge 2$. Clearly, $t^1, t^2\circ \cdots \circ t^r \in J(s^{(i)}_{\mathcal{D}^{(i)}}) \vee J(s^{(j)}_{\mathcal{D}^{(j)}})$. 

We also know that $J(s^{(i)}_{\mathcal{D}^{(i)}}) \vee J(s^{(j)}_{\mathcal{D}^{(j)}}) \le B$ and $J(s^{(i)}_{\mathcal{D}^{(i)}}) \vee J(s^{(j)}_{\mathcal{D}^{(j)}}) \le B^\prime.$ To see this, note that $s^{(i)}_{\mathcal{D}^{(i)}}, s^{(j)}_{\mathcal{D}^{(j)}} \in \psi(B)$ implies that there exists $(B^1, B^2), (B^3, B^4) \in \text{Cov}([\bigwedge_{i =1}^k, B_i, B])$ such that $\widetilde{\lambda}(B^1, B^2) = s^{(i)}_{\mathcal{D}^{(i)}}$ and $\widetilde{\lambda}(B^3, B^4) = s^{(j)}_{\mathcal{D}^{(j)}}$. By Proposition~\ref{Prop_join_irr_des}, we have that $J(s^{(i)}_{\mathcal{D}^{(i)}}) \le B^2$ and $J(s^{(j)}_{\mathcal{D}^{(j)}}) \le B^4.$ Thus $J(s^{(i)}_{\mathcal{D}^{(i)}}) \vee J(s^{(j)}_{\mathcal{D}^{(j)}}) \le B$. The proof that $J(s^{(i)}_{\mathcal{D}^{(i)}}) \vee J(s^{(j)}_{\mathcal{D}^{(j)}}) \le B^\prime$ is similar.


By Remark~\ref{remark_same_N}, we obtain that $t^1$ and $t^2\circ \cdots \circ t^r$ are faultline splits of $s^{(i)} = s_{i_1}\circ \cdots \circ s_{i_n}$ and $s^{(i)} = t_{j_1} \circ \cdots \circ t_{j_m}$. Therefore, $t^1, t^2\circ \cdots \circ t^r \in \text{Seg}(\psi(B)\cap \psi(B^\prime)).$ We can assume, without loss of generality, that $t^1 \in \mathcal{D}^{(i)}$. We obtain that $s^{(i)}_{\mathcal{D}^{(i)}}, s^{(i)}_{(\mathcal{D}^{(i)}\backslash\{t^1\}) \sqcup \{t^2\circ \cdots \circ t^r\}} \in \psi(B)\cap \psi(B^\prime)$. However, this contradicts that $s^{(i)}$ is a segment in exactly one label in $\psi(B) \cap \psi(B^\prime)$. 

Lastly, we show $s^{(i)}_{\mathcal{D}^{(i)}}$ and $s^{(j)}_{\mathcal{D}^{(j)}}$ satisfy property 3). Assume that we have $s^{(i)} \subseteq s^{(j)}$. Write $s^{(j)} = s_{i_1}\circ \cdots \circ s_{i_n}$ for some $s_{i_1}, \ldots, s_{i_n} \in \{s_1, \ldots, s_k\}$ and $s^{(j)} = t_{j_1} \circ \cdots \circ t_{j_m}$ for some $t_{j_1}, \ldots, t_{j_m} \in \{t_1, \ldots, t_l\}$. One checks that there are the following three cases describing how $s^{(i)}$ may be expressed in terms of $s_{i_1}, \ldots, s_{i_n}$ and $t_{j_1}, \ldots, t_{j_m}$:
\begin{itemize}
\item[i)] $s^{(i)} = s_{i_\ell}\circ \cdots \circ s_{i_{\ell^\prime}}$ and $s^{(i)} = t^r\circ t_{j_{r+1}}\circ \cdots \circ t_{j_{r^\prime-1}}\circ t^{r^\prime},$
\item[ii)] $s^{(i)} = t_{j_r}\circ \cdots \circ t_{j_{r^\prime}}$ and $s^{(i)} = t^\ell \circ s_{i_{\ell+1}}\circ \cdots \circ s_{i_{\ell^\prime-1}}\circ t^{\ell^\prime},$
\item[iii)] $s^{(i)} = t^\ell \circ s_{i_{\ell+1}}\circ \cdots \circ s_{i_{\ell^\prime-1}}\circ t^{\ell^\prime}$ and $s^{(i)} = t^r\circ t_{j_{r+1}}\circ \cdots \circ t_{j_{r^\prime-1}}\circ t^{r^\prime}.$
\end{itemize}
In these cases, $t^r$ is a split of $t_{j_r}$, $t^{r^\prime}$ is a split of $t_{j_{r^\prime}}$, $t^\ell$ is a split of $s_{i_\ell}$, and $t^{\ell^\prime}$ is a split of $s_{i_{\ell^\prime}}$. In the degenerate case when $r = r^\prime$ (resp., $\ell = \ell^\prime$), we mean that $s^{(i)}$ is proper subsegment of $t_{j_r}$ (resp., $s_{i_\ell}$) that is not a split of $t_{j_r}$ (resp., $s_{i_\ell}$).

Now write $s^{(i)} \subseteq s_{i_\ell}\circ \cdots \circ s_{i_{\ell^\prime}}$ with $\ell^\prime - \ell$ as small as possible. We can assume without loss of generality that we are in case ii) or iii). This means we can express $s^{(i)}$ in terms of $\{s_1, \ldots, s_k\}$ as $s^{(i)} = s_{k_1}\circ \cdots \circ s_{k_r}$ where $s_{k_1}$ is not a split of $s_{i_\ell}$ or $s_{i_{\ell^\prime}}$ and no segment in $\{s_{i_\ell}, \ldots, s_{i_{\ell^\prime}}\}$ is a split of $s_{k_1}$. Similarly, segment $s_{k_r}$ will have these same properties. It follows that $s_{k_1}, s_{k_r} \not \in \{s_{i_1}, \ldots, s_{i_n}\}$. 


We claim that $\{s_{i_1}, \ldots, s_{i_n}\}\cap \{s_{k_1},\ldots, s_{k_r}\} = \emptyset$. If this were not true, then from the properties of $s_{k_1}$ and $s_{k_r}$, there must exist $s_{k_j} \in \{s_{k_1},\ldots, s_{k_r}\}$ that is either a split of an element in $\{s_{i_\ell},\ldots, s_{i_{\ell^\prime}}\}$ or an element of this set is a split of $s_{k_j}$. The existence of such an element contradicts Lemma~\ref{nosplit}. 

We obtain that $J({s_{k_j}}_{\mathcal{D}_{k_j}}) \not \le \bigvee_{r = 1}^n J({s_{i_r}}_{\mathcal{D}_{i_r}})$ for any $s_{k_j} \in \{s_{k_1}, \ldots, s_{k_r}\}$. If this containment did hold, then $\bigvee_{i = 1}^k J({s_i}_{\mathcal{D}_i})$ would not be an irredundant join representation. This would contradict that $\bigvee_{i = 1}^k J({s_i}_{\mathcal{D}_i})$ is a canonical join representation.

Next, by Lemma~\ref{Lemma_special_subseg}, there exists $s_{k^*} \in \{s_{k_1}, \ldots, s_{k_r}\}$ such that $s_{k^*} \in J(s^{(i)}_{\mathcal{D}^{(i)}}).$ Observe that $J({s_{k^*}}_{\mathcal{D}_{k^*}}) \le J(s^{(i)}_{\mathcal{D}^{(i)}})$ by Lemma~\ref{Lemma_join_irr_containment}. This implies that $J(s^{(i)}_{\mathcal{D}^{(i)}}) \not \le \bigvee_{r = 1}^n J({s_{i_r}}_{\mathcal{D}_{i_r}})$. {Since $J(s^{(j)}_{\mathcal{D}^{(j)}}) \le \bigvee_{r = 1}^n J({s_{i_r}}_{\mathcal{D}_{i_r}})$,} we see that $J(s^{(i)}_{\mathcal{D}^{(i)}}) \not \le J(s^{(j)}_{\mathcal{D}^{(j)}})$. 

Since $s^{(i)}\subseteq s^{(j)}$, it must be a proper subsegment of $s^{(j)}$, and it is not a split of $s^{(j)}$. Thus $J(s^{(j)}_{\mathcal{D}^{(j)}}) \not \le J(s^{(i)}_{\mathcal{D}^{(i)}}),$ since $s^{(j)} \not \in J(s^{(i)}_{\mathcal{D}^{(i)}})$. We obtain that $s^{(i)}_{\mathcal{D}^{(i)}}$ and $s^{(j)}_{\mathcal{D}^{(j)}}$ satisfy property 3). We conclude that the expression $J(s^{(i)}_{\mathcal{D}^{(i)}}) \vee J(s^{(j)}_{\mathcal{D}^{(j)}})$ is a canonical join representation.\end{proof}


\begin{lemma}\label{Lemma_special_subseg}
If $J(s_\mathcal{D}) \in \text{JI}(\text{Bic}(T))$ and $s = s_{k_1}\circ \cdots \circ s_{k_r}$, then there exists $s_{k^*} \in \{s_{k_1}, \ldots, s_{k_r}\}$ such that $s_{k^*} \in J(s_{\mathcal{D}})$.
\end{lemma}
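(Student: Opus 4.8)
The plan is to argue by contradiction using the coclosedness of $J(s_\mathcal{D})$ established in Lemma~\ref{Lemma_J_biclosed}. For $j = 1, \ldots, r$ I would write $q_j := s_{k_j}\circ s_{k_{j+1}}\circ \cdots \circ s_{k_r}$ for the $j$-th suffix of the given factorization, so that $q_1 = s$ and $q_r = s_{k_r}$. Each $q_j$ is a contiguous subsegment of $s$, hence itself a segment, and it shares its terminal endpoint with $s$. For $j < r$ the pair $\{s_{k_j}, q_{j+1}\}$ is a break of $q_j$, with faultline the vertex separating $s_{k_j}$ from $s_{k_{j+1}}$, since $s_{k_j}$ shares the initial endpoint of $q_j$, $q_{j+1}$ shares the terminal endpoint of $q_j$, and they meet at that faultline.

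First I would record the consequence of Lemma~\ref{Lemma_J_biclosed} that drives the argument: since $J(s_\mathcal{D})$ is coclosed, whenever a segment $t = a\circ b$ lies in $J(s_\mathcal{D})$, at least one of $a, b$ must lie in $J(s_\mathcal{D})$. Otherwise $a, b \in J(s_\mathcal{D})^c$ would be composable with composition $a \circ b = t \notin J(s_\mathcal{D})^c$, contradicting that the complement is closed. This is precisely the observation invoked in the proof of Lemma~\ref{Lemma_join_irr_containment}.

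Now suppose for contradiction that $s_{k_j}\notin J(s_\mathcal{D})$ for every $j = 1, \ldots, r$. I would prove by induction on $j$ that $q_j \in J(s_\mathcal{D})$. The base case $q_1 = s \in J(s_\mathcal{D})$ holds by the definition of $J(s_\mathcal{D})$. For the inductive step, assume $q_j \in J(s_\mathcal{D})$ with $j < r$; applying the coclosedness consequence to the factorization $q_j = s_{k_j}\circ q_{j+1}$ together with $s_{k_j}\notin J(s_\mathcal{D})$ forces $q_{j+1}\in J(s_\mathcal{D})$. Iterating up to $j = r$ yields $q_r = s_{k_r}\in J(s_\mathcal{D})$, contradicting the standing assumption that $s_{k_r}\notin J(s_\mathcal{D})$. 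Hence some $s_{k^*}\in\{s_{k_1},\ldots, s_{k_r}\}$ lies in $J(s_\mathcal{D})$, as claimed.

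The argument is short, and the only care needed is the bookkeeping: confirming that each $q_j$ is a genuine segment and that $\{s_{k_j}, q_{j+1}\}$ is a legitimate break of it, so that coclosedness applies at every step. I do not expect any serious obstacle beyond this verification, since the substantive content is already packaged in Lemma~\ref{Lemma_J_biclosed}.
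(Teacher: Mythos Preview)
Your proof is correct and follows essentially the same inductive peeling strategy as the paper's proof: start with $s\in J(s_\mathcal{D})$ and, assuming none of the factors lie in $J(s_\mathcal{D})$, repeatedly strip one factor off using coclosedness until the last remaining factor is forced into $J(s_\mathcal{D})$, a contradiction. The only cosmetic difference is that the paper peels from the right (checking $s_{k_r}$ first) while you peel from the left.

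There is, however, one genuine simplification in your argument worth noting. The paper, after obtaining that a prefix $s_{k_1}\circ\cdots\circ s_{k_{j}}$ lies in $J(s_\mathcal{D})$, invokes Lemma~\ref{Lemma_join_irr_containment} to pass to the smaller join-irreducible $J((s_{k_1}\circ\cdots\circ s_{k_{j}})_{\mathcal{D}(s_{k_1}\circ\cdots\circ s_{k_{j}})})$ and then applies coclosedness inside that smaller set. You bypass this detour entirely by applying coclosedness of $J(s_\mathcal{D})$ itself at every step. This is cleaner and shows that Lemma~\ref{Lemma_join_irr_containment} is not actually needed here; the coclosedness statement from Lemma~\ref{Lemma_J_biclosed} alone suffices.
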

\begin{proof}
If $s_{k_r} \not \in J(s_\mathcal{D})$, then $s_{k_1}\circ \cdots \circ s_{k_{r-1}} \in J(s_{\mathcal{D}})$. By Lemma~\ref{Lemma_join_irr_containment}, we have that $$J((s_{k_1}\circ \cdots \circ s_{k_{r-1}})_{\mathcal{D}(s_{k_1}\circ \cdots \circ s_{k_{r-1}})}) \le J(s_\mathcal{D})$$ where ${\mathcal{D}(s_{k_1}\circ \cdots \circ s_{k_{r-1}})} = \{s^\prime \in J(s_\mathcal{D}): \ \text{$s^\prime$ is a split of $s_{k_1}\circ \cdots \circ s_{k_{r-1}}$}\}.$ If $s_{k_{r-1}} \not \in J(s_{\mathcal{D}})$, then $s_{k_{r-1}} \not \in J((s_{k_1}\circ \cdots \circ s_{k_{r-1}})_{\mathcal{D}(s_{k_1}\circ \cdots \circ s_{k_{r-1}})})$ so $s_{k_1}\circ \cdots \circ s_{k_{r-2}} \in J((s_{k_1}\circ \cdots \circ s_{k_{r-1}})_{\mathcal{D}(s_{k_1}\circ \cdots \circ s_{k_{r-1}})})$. Now by Lemma~\ref{Lemma_join_irr_containment}, we have that $$J((s_{k_1}\circ \cdots \circ s_{k_{r-2}})_{\mathcal{D}(s_{k_1}\circ \cdots \circ s_{k_{r-2}})}) \le J(s_\mathcal{D})$$ where ${\mathcal{D}(s_{k_1}\circ \cdots \circ s_{k_{r-2}})} = \{s^\prime \in J(s_\mathcal{D}): \ \text{$s^\prime$ is a split of $s_{k_1}\circ \cdots \circ s_{k_{r-2}}$}\}.$ Continuing this process, we obtain that there exists $s_{k^*} \in  J(s_{\mathcal{D}})$.
\end{proof}

\begin{lemma}\label{meet_in_Psi}
{We have that $\psi\left(\bigvee_{i=1}^\ell J(s^{(i)}_{\mathcal{D}^{(i)}})\right) = \psi(B)\cap \psi(B^\prime).$}
\end{lemma}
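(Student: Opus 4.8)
The plan is to exhibit the biclosed set $B'' := \bigvee_{i=1}^\ell J(s^{(i)}_{\mathcal{D}^{(i)}})$ as the one whose shard set realizes the intersection, proving $\psi(B'') = \psi(B)\cap \psi(B')$ by double inclusion after first pinning down $\psi(B'')$ explicitly. First I would record that, by Lemma~\ref{Lemma_cjr_for_meet}, the expression defining $B''$ is a canonical join representation; combined with Proposition~\ref{Prop_join_irr_des} and Lemma~\ref{Lemma_cjr_cmr} this identifies $\widetilde{\lambda}_\downarrow(B'') = \{s^{(i)}_{\mathcal{D}^{(i)}}\}_{i=1}^\ell$, so that $\lambda_\downarrow(B'') = \{s^{(1)}, \dots, s^{(\ell)}\}$. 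Theorem~\ref{psib} then describes $\psi(B'')$ as the set of all labels $(s^{(p_1)}\circ \cdots \circ s^{(p_r)})_{\mathcal{D}}$ with $s^{(p_j)} \in \{s^{(1)}, \dots, s^{(\ell)}\}$, segment ranging over $\overline{\{s^{(1)}, \dots, s^{(\ell)}\}}$, and $\mathcal{D}$ subject to (i)--(iv) relative to the $s^{(i)}$-decomposition and the sets $\mathcal{D}^{(i)}$. By Lemma~\ref{Lemma_seg_of_int} the segments occurring in $\psi(B'')$ and in $\psi(B)\cap \psi(B')$ form the same set $\overline{\{s^{(1)},\dots,s^{(\ell)}\}}$, so the whole problem reduces to fixing one such segment $s$, with its unique decomposition $s = s^{(p_1)} \circ \cdots \circ s^{(p_r)}$ (Lemma~\ref{Lemma_ker_coker} applied to $B''$), and matching the admissible sets $\mathcal{D}$ on each side.

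For the inclusion $\psi(B'') \subseteq \psi(B)\cap \psi(B')$, I would fix $s_\mathcal{D} \in \psi(B'')$ and show $s_\mathcal{D}\in \psi(B)$, the argument for $\psi(B')$ being identical. Since each $s^{(p_a)} \in \overline{\lambda_\downarrow(B)}$, the $\lambda_\downarrow(B)$-decomposition of $s$ refines its $s^{(i)}$-decomposition (by uniqueness, Lemma~\ref{Lemma_ker_coker} for $B$), so every faultline split of $s$ relative to $\{s^{(i)}\}$ is also a faultline split relative to $\lambda_\downarrow(B)$; equivalently, a split $t \in \mathcal{D}$ that is non-faultline relative to $\lambda_\downarrow(B)$ is automatically non-faultline relative to $\{s^{(i)}\}$. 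Writing such a $t$ (say a prefix split ending at a vertex $v$ interior to $s^{(p_a)}$) as $s^{(p_1)}\circ \cdots \circ s^{(p_{a-1})}\circ t_a$, property (iv) for $B''$ gives $t_a \in \mathcal{D}^{(p_a)}$, and then property (iv) applied to $s^{(p_a)}_{\mathcal{D}^{(p_a)}} \in \psi(B)$ (where $t_a$ is a non-faultline split of $s^{(p_a)}$) rewrites $t_a$ in the $\lambda_\downarrow(B)$-form demanded by (iv) for $s_\mathcal{D}$. Conditions (i)--(iii) are intrinsic to $\mathcal{D}$ as a system of splits of $s$ and transfer without change, giving $s_\mathcal{D}\in \psi(B)$.

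The reverse inclusion $\psi(B)\cap \psi(B') \subseteq \psi(B'')$ is where I expect the main difficulty. Given $s_\mathcal{D}\in \psi(B)\cap\psi(B')$, verifying (iv) for $B''$ amounts to showing that at each vertex $v$ interior to a block $s^{(p_a)}$ the split of $s$ chosen by $\mathcal{D}$ agrees, after restriction to $s^{(p_a)}$, with the choice recorded in $\mathcal{D}^{(p_a)}$. A naive comparison of (iv) for $B$ and for $B'$ breaks down precisely at vertices $v$ that are faultlines for \emph{both} the $\lambda_\downarrow(B)$- and the $\lambda_\downarrow(B')$-decompositions, since (iv) imposes no constraint there. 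To circumvent this I would isolate the block directly: peel off the prefix $s^{(p_1)}\circ\cdots\circ s^{(p_{a-1})}$ and the suffix $s^{(p_{a+1})}\circ\cdots\circ s^{(p_r)}$ (both lie in $\overline{\{s^{(i)}\}} = \text{Seg}(\psi(B)\cap\psi(B'))$, hence each supports a label in the intersection) by applying Lemma~\ref{intersection_ker_coker}, together with its left--right mirror, twice. A short inspection of the explicit formula for $\mathcal{D}''$ in Lemma~\ref{intersection_ker_coker} shows that peeling preserves the prefix/suffix choice at any interior vertex, so the extracted label $(s^{(p_a)})_{\mathcal{E}} \in \psi(B)\cap\psi(B')$ records the same choice at $v$ as $\mathcal{D}$ does.

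The crucial point is that $s^{(p_a)}$ occurs in exactly one label of the intersection, so $\mathcal{E} = \mathcal{D}^{(p_a)}$ necessarily (Proposition~\ref{Lemma_property_of_s_i}, with existence and distinctness from Lemma~\ref{nonempty_s^i}); hence the choice of $\mathcal{D}$ at $v$ matches $\mathcal{D}^{(p_a)}$ as required, and (i)--(iii) transfer as before. What makes this step robust, and sidesteps the faultline obstruction entirely, is that uniqueness of the label on $s^{(p_a)}$ pins down $\mathcal{E}$ without ever invoking (iv) at the troublesome vertex $v$. Combining the two inclusions yields $\psi(B'') = \psi(B)\cap\psi(B')$.
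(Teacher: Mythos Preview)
Your proposal is correct and follows the same overall architecture as the paper: both reduce to Theorem~\ref{psib} after identifying $\widetilde{\lambda}_\downarrow(B'')=\{s^{(i)}_{\mathcal{D}^{(i)}}\}$ via Lemma~\ref{Lemma_cjr_for_meet}, match the underlying segments via Lemma~\ref{Lemma_seg_of_int}, and handle the reverse inclusion by peeling with Lemma~\ref{intersection_ker_coker} until the block $s^{(p_a)}$ is isolated and uniqueness forces $\mathcal{E}=\mathcal{D}^{(p_a)}$. The only substantive difference is in the forward inclusion $\psi(B'')\subseteq\psi(B)$: the paper exhibits an explicit covering relation in the facial interval $[\bigwedge_i B_i,B]$ carrying the label $(s^{(i_1)}\circ\cdots\circ s^{(i_r)})_{\mathcal{D}}$, whereas you verify conditions (i)--(iv) of Theorem~\ref{psib} directly by refining the $s^{(i)}$-decomposition to the $\lambda_\downarrow(B)$-decomposition and pushing the non-faultline constraint through $s^{(p_a)}_{\mathcal{D}^{(p_a)}}\in\psi(B)$. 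Your route is a bit more economical since it avoids reproving that $B^*\sqcup\{s\}$ is biclosed; the paper's route has the advantage of producing the witnessing cover explicitly. For the reverse inclusion you peel both prefix and suffix in one pass while the paper organizes the same peeling as an induction on $r$; these are equivalent.
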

\begin{proof}
By Lemma~\ref{Lemma_cjr_for_meet}, we know that $\widetilde{\lambda}_{\downarrow}\left(\bigvee_{i=1}^\ell J(s^{(i)}_{\mathcal{D}^{(i)}})\right) = \{s^{(i)}_{\mathcal{D}^{(i)}}\}_{i = 1}^\ell.$ Thus, by Theorem~\ref{psib}, any element of $\psi\left(\bigvee_{i=1}^\ell J(s^{(i)}_{\mathcal{D}^{(i)}})\right)$ is of the form $(s^{(i_1)}\circ\cdots \circ s^{(i_r)})_\mathcal{D}$ for some $i_1, \ldots, i_r \in \{1,\ldots, \ell\}$. From Lemma~\ref{Lemma_seg_of_int}, we know that $s^{(i_1)}\circ\cdots \circ s^{(i_r)} \in \text{Seg}(\psi(B)\cap \psi(B^\prime))$. 

We now show that $(s^{(i_1)}\circ\cdots \circ s^{(i_r)})_\mathcal{D} \in \psi(B)\cap \psi(B^\prime)$. We show $(s^{(i_1)}\circ\cdots \circ s^{(i_r)})_\mathcal{D} \in \psi(B)$ and the analogous argument shows that $(s^{(i_1)}\circ\cdots \circ s^{(i_r)})_\mathcal{D} \in \psi(B^\prime)$. Consider the biclosed set $$B^* := \left(\bigwedge_{i=1}^kB_i\right) \vee \left(J((s^{(i_1)}\circ \cdots \circ s^{(i_r)})_\mathcal{D})\backslash \{s^{(i_1)}\circ \cdots \circ s^{(i_r)}\}\right).$$ Clearly, $\bigwedge_{i=1}^kB_i \le B^*$ and $B^* < B$ as $s^{(i_1)}\circ \cdots \circ s^{(i_r)} \in B\backslash B^*$. 

If we assume that $B^*\sqcup \{s^{(i_1)}\circ \cdots \circ s^{(i_r)}\}$ is biclosed, we obtain that $\widetilde{\lambda}(B^*, B^*\sqcup \{s^{(i_1)}\circ \cdots \circ s^{(i_r)}\}) = (s^{(i_1)}\circ\cdots \circ s^{(i_r)})_\mathcal{D}$ and $B^*\sqcup \{s^{(i_1)}\circ \cdots \circ s^{(i_r)}\} \le B$. From this it follows that $(s^{(i_1)}\circ\cdots \circ s^{(i_r)})_\mathcal{D} \in \psi(B)$. One shows that $B^*\sqcup \{s^{(i_1)}\circ \cdots \circ s^{(i_r)}\}$ is biclosed by adapting the argument that we used in Lemma~\ref{nonempty_s^i}.






Conversely, suppose $(s^{(i_1)}\circ \cdots \circ s^{(i_r)})_{\mathcal{D}} \in \psi(B)\cap \psi(B^\prime)$ where this label can be expressed this way by Lemma~\ref{Lemma_seg_of_int}. By Theorem~\ref{psib}, we know that there exist labels $\widetilde{\lambda}(B^1, B^2) \in \psi\left(\bigvee_{i=1}^\ell J(s^{(i)}_{\mathcal{D}^{(i)}})\right)$ such that $\lambda(B^1, B^2) = s^{(i_1)}\circ \cdots \circ s^{(i_r)}.$ Thus it is enough to show that $\mathcal{D}$ satisfies the properties appearing in the statement of Theorem~\ref{psib} with respect to the labels $\{s^{(i)}_{\mathcal{D}^{(i)}}\}_{i = 1}^\ell$. In fact, it is enough to check that the non-faultline splits of $\mathcal{D}$ satisfy (iv) in the statement of Theorem~\ref{psib}. If $r = 1$, the label under consideration is $s^{(i_1)}_{\mathcal{D}^{(i_1)}}$ and the result follows.


Now suppose that $s^{(i_1)}\circ \cdots \circ s^{(i_{j-1})}\circ t^{(i_j)} \in \mathcal{D}$ is a non-faultline split of $s^{(i_1)}\circ \cdots \circ s^{(i_{r})}$ where $t^{(i_j)}$ is a split of $s^{(i_j)}$. We assume that the result holds for labels $(s^{(j_1)}\circ \cdots \circ s^{(j_{r^\prime})})_{\mathcal{D}} \in \psi(B)\cap \psi(B^\prime)$ with $r^\prime < r$. Using the approach in the proof of Lemma~\ref{intersection_ker_coker}, we have that the label $(s^{(i_j)}\circ \cdots \circ s^{(i_r)})_{\mathcal{D}^{\prime\prime}} \in \psi(B)\cap \psi(B^\prime)$ where $$\mathcal{D}^{\prime\prime} := \left\{\begin{array}{lcl} t^* & : & \text{if $t^* \in \mathcal{D}$ and $t^*$ is a proper subsegment of $s^{(i_j)}\circ \cdots \circ s^{(i_r)}$,}\\
t^* & : & \text{if  $s^{(i_1)}\circ \cdots \circ s^{(i_{j-1})} \circ t^{*} \in \mathcal{D}$ where $t^*$ is a split of $s^{(i_j)}\circ \cdots \circ s^{(i_r)}$.} \end{array}\right.$$

We observe that $t^{(i_j)} \in \mathcal{D}^{\prime\prime}$. By induction, $t^{(i_j)} \in \mathcal{D}^{(i_j)}$. We conclude that $(s^{(i_1)}\circ \cdots \circ s^{(i_r)})_{\mathcal{D}} \in \psi\left(\bigvee_{i=1}^\ell J(s^{(i)}_{\mathcal{D}^{(i)}})\right)$. \end{proof}



\begin{theorem}\label{Thm_lattice_prop}
The shard intersection order $\Psi(\text{Bic}(T))$ is a lattice.
\end{theorem}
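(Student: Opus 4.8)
The plan is to deduce the theorem formally from Lemma~\ref{meet_in_Psi}, which already supplies the meet operation and carries all of the combinatorial content. I would realize $\Psi(\text{Bic}(T))$ as a collection of subsets of $\mathcal{S}_T$ ordered by inclusion, that is, as a subposet of the Boolean lattice $2^{\mathcal{S}_T}$; this is legitimate since $\psi(B)\subseteq\mathcal{S}_T$ for every $B\in\text{Bic}(T)$. The entire argument then reduces to two observations about this subposet: that it is closed under pairwise intersection, and that it possesses a greatest element.

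First I would record that Lemma~\ref{meet_in_Psi} says precisely that $\Psi(\text{Bic}(T))$ is closed under intersection: for any $B,B'\in\text{Bic}(T)$ one has $\psi(B)\cap\psi(B')=\psi\!\left(\bigvee_{i=1}^{\ell}J(s^{(i)}_{\mathcal{D}^{(i)}})\right)$, so the intersection is again of the form $\psi(B'')$ and hence a genuine element of $\Psi(\text{Bic}(T))$. Next I would invoke the elementary fact that, in any family of sets ordered by inclusion, whenever the set-theoretic intersection $\psi(B)\cap\psi(B')$ happens to lie in the family it is automatically the greatest lower bound of $\psi(B)$ and $\psi(B')$: it is a lower bound of both, while any common lower bound $\psi(C)$ satisfies $\psi(C)\subseteq\psi(B)$ and $\psi(C)\subseteq\psi(B')$, whence $\psi(C)\subseteq\psi(B)\cap\psi(B')$. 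Thus $\Psi(\text{Bic}(T))$ is a finite meet-semilattice whose meet is given by intersection.

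It then remains only to exhibit a greatest element, after which the standard criterion that a finite meet-semilattice with a top element is a lattice finishes the proof: the join of two elements is the meet of their set of common upper bounds, which is nonempty because the top is such an upper bound. The greatest element is $\psi(\hat 1)=\mathcal{S}_T$, where $\hat 1=\text{Seg}(T)$ is the maximum biclosed set. Indeed $\psi(B)\subseteq\mathcal{S}_T$ for every $B$, and one checks $\psi(\hat 1)=\mathcal{S}_T$ directly from Theorem~\ref{psib}: the biclosed sets covered by $\hat 1$ are exactly those obtained by deleting a shortest segment of $T$, so $\overline{\lambda_\downarrow(\hat 1)}=\text{Seg}(T)$; and relative to a decomposition of a segment into shortest segments every split is a faultline split, so condition (iv) of Theorem~\ref{psib} imposes no constraint and every label of $\mathcal{S}_T$ indeed occurs.

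The substantive work is contained entirely in the preceding results—above all Lemma~\ref{meet_in_Psi} together with the technical Lemmas feeding into it—so the step carried out here is essentially formal. The only genuinely new, and quite minor, point to verify is that $\Psi(\text{Bic}(T))$ has a maximum element; once that verification via Theorem~\ref{psib} is in hand, closure under intersection plus the meet-semilattice-with-top criterion yields the lattice property immediately.
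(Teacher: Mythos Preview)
Your proposal is correct and follows essentially the same route as the paper: both arguments invoke Lemma~\ref{meet_in_Psi} to show that $\Psi(\text{Bic}(T))$ is closed under intersection, verify that $\psi(\text{Seg}(T))=\mathcal{S}_T$ is the unique maximal element, and conclude via the standard finite meet-semilattice-with-top criterion. The only cosmetic difference is that you verify $\psi(\hat 1)=\mathcal{S}_T$ through Theorem~\ref{psib}, whereas the paper does so directly from the definition of $\psi$ (observing that the meet of the coatoms of $\text{Bic}(T)$ is $\emptyset$, so the facial interval for $\hat 1$ is all of $\text{Bic}(T)$ and hence $\psi(\hat 1)=\widetilde{\lambda}(\text{Cov}(\text{Bic}(T)))=\mathcal{S}_T$ by Proposition~\ref{Prop_join_irr_des}).
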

\begin{proof}
The shard intersection order $\Psi(\text{Bic}(T))$ is finite. So, to prove that it is a lattice, we show that it has a unique maximal element and that for any $\psi(B), \psi(B^\prime) \in \Psi(\text{Bic}(T))$ one has $\psi(B)\cap \psi(B^\prime) \in \Psi(\text{Bic}(T))$. The latter is a consequence of Lemma~\ref{meet_in_Psi}.

Next, the shard intersection order of $\text{Bic}(T)$ has a unique maximal element. The set $\text{Seg}(T)$ is the top element of $\text{Bic}(T)$, and the coatoms of $\text{Bic}(T)$ are the sets of the form $\text{Seg}(T)\backslash\{[a,b]\}$ where $a$ and $b$ are interior vertices connected by an edge of T. Thus the meet of all coatoms of $\text{Bic}(T)$ is the empty set. Now, using the definition of $\Psi(\text{Bic}(T))$ and Proposition~\ref{Prop_join_irr_des}, we see that $\psi(\text{Seg}(T)) = \widetilde{\lambda}(\text{Cov}(\text{Bic}(T))) = \mathcal{S}_T.$ This implies that $\psi(\text{Seg}(T))$ is the unique maximal element of $\Psi(\text{Bic}(T))$.
\end{proof}

\begin{remark}
Theorem~\ref{Thm_lattice_prop} was originally conjectured by the first and second author in \cite[Conjecture 6.3]{cliftondillery}.
\end{remark}

\begin{remark}
It is an open problem in \cite[Problem 9.5]{ReadingPAB} to determine which congruence-uniform lattices $L$ have the property that $\Psi(L)$ is a lattice. Theorem~\ref{Thm_lattice_prop} provides many new examples of congruence-uniform lattices with this property.
\end{remark}

\begin{remark}
In the very recent preprint \cite[Theorem 1.1]{muhle}, M\"uhle showed that a congruence-uniform lattice $L$ must be \textbf{spherical} if its shard intersection order $\Psi(L)$ is a lattice. That is, the M\"obius function on $L$, denoted $\mu_L(-,-)$, must satisfy $\mu(\hat{0}, \hat{1}) \neq 0 $ if $\Psi(L)$ is a lattice. 

One shows that $\text{Bic}(T)$ has this property by first observing that $$\bigvee_{[a,b] \text{ an edge of $T$}} \{[a,b]\} = \text{Seg}(T).$$ Now this implies that $\mu(\emptyset, \text{Seg}(T)) = (-1)^{|\{\text{edges of $T$}\}|}$ where $\mu(-,-)$ is the M\"obius function on $\text{Bic}(T)$.
\end{remark}


\section*{Acknowledgements}
This project started at the 2016 combinatorics REU at the School of Mathematics, University of Minnesota, Twin Cities, and was supported by NSF RTG grant DMS-1148634. The authors would like to thank Craig Corsi, Thomas McConville, Henri M\"uhle, and Vic Reiner for their valuable advice and for useful conversations. Alexander Garver also received support from NSERC and the Canada Research Chairs program during various parts of this project.

\bibliographystyle{plain}
\bibliography{bib_Phi(Bic)}

\end{document}